\newtheorem{theorem}{Theorem}[section]
\newtheorem{lemma}[theorem]{Lemma}
\newtheorem{corollary}[theorem]{Corollary}
\theoremstyle{definition}
\newtheorem{remark}[theorem]{Remark}
\numberwithin{equation}{section}
\begin{document}

\title[On a geometric inequality related to fractional integration]{{\bf On a geometric  inequality related to fractional integration}}
\author{\small TING CHEN}
\address{Ting Chen:   School of Mathematics, University of Edinburgh, EH9 3JZ, UK.}
 \email{t.chen-16@sms.ed.ac.uk}

\footnotetext{\hspace{-3pt}
{2010 \em Mathematics Subject Classification.} Primary 26D15, 42B99.\\
\indent{\em Key words and phrases.} Hardy-Littlewood-Sobolev inequality,  geometric inequalities, optimisers, sharp constants.}

\begin{abstract}
 In this paper we consider a new kind of inequality  related to fractional integration, motivated by Gressman's paper. Based on it we investigate its multilinear analogue inequalities. Combining with the Gressman's work on multilinear integral, we establish this new kind of geometric  inequalities with bilinear form and multilinear form in more general settings. Moreover, in some cases we also find the best constants and optimisers for these geometric inequalities on Euclidean spaces with Lebesgue measure settings with $L^{p}$ bounds.
\end{abstract}

\maketitle

\section{Introduction}

Several  fractional  integral  inequalities have been studied. The Hardy-Littlewood-Sobolev inequality asserted
$$\| \int_{\mathbb{R}} g(t)  \frac{1}{|s-t|^{2-\frac{1}{p}-\frac{1}{q}}} dt\|_{p^{\prime}} \leq C_{p,q} \ \| g\|_{q},$$
for $1< p$, $q< \infty$,  $\frac{1}{p}+\frac{1}{q} > 1$ and all  functions $g$ in $L^{q}(\mathbb{R})$.
Applying H\"{o}lder's inequality gives the forward Hardy-Littlewood-Sobolev inequality $(1.1)$:
for $1< p$, $q< \infty$,  $\frac{1}{p}+\frac{1}{q} > 1$
$$| \int_{\mathbb{R}}  \int_{\mathbb{R}}  f(s) g(t) \frac{1}{|s-t|^{2-\frac{1}{p}-\frac{1}{q}}} ds \ dt| \leq C_{p,q} \ \| f\|_{p}
\| g\|_{q}  \eqno(1.1)$$
holds for  all functions $f \in L^{p}(\mathbb{R})$ and $g \in L^{q}(\mathbb{R})$.
Gressman  \cite{Gressman}  showed the equivalence between the forward Hardy-Littlewood-Sobolev inequality $(1.1)$ and
the inverse Hardy-Littlewood-Sobolev inequality $(1.2)$ which follows. For $0< p$, $q< 1$,
and all functions $f \in L^{p}(\mathbb{R})$ and $g \in L^{q}(\mathbb{R})$, we have
$$\|f\|_{p} \| g\|_{q} \leq C_{p,q}  \int_{\mathbb{R}}  \int_{\mathbb{R}}  | f(s) g(t)|  |s-t|^{\frac{1}{p}+\frac{1}{q}-2}  dsdt .   \eqno(1.2)$$
As a result, for $0 < p$,  $q < r <\infty$  and all measurable functions $f,  g$,
$$\|f^{r}\|_{\frac{p}{r}} \| g^{r}\|_{\frac{q}{r}} \leq C_{\frac{p}{r},\frac{q}{r}}
\int_{\mathbb{R}}  \int_{\mathbb{R}} |  f^{r}(s) g^{r}(t)|  |s-t|^{\frac{r}{p}+\frac{r}{q}-2}  ds dt .$$
Then,  for $0 < p$,  $q < r <\infty$  and all functions $f \in L^{p}(\mathbb{R})$ and $g \in L^{q}(\mathbb{R})$,  we have
$$\|f\|_{p}\  \|g\|_{q} \leq
 C_{\frac{p}{r},\frac{q}{r}}^{\frac{1}{r}} \ \| f(s) g(t) |s-t|^{\frac{1}{p}+\frac{1}{q}-\frac{2}{r}} \|_{L^{r}(dsdt)}. \eqno(1.3)$$
It is natural to ask what the inequality $(1.3)$ leads to if we let $r \rightarrow \infty$,
and what happens to the constant $C_{\frac{p}{r},\frac{q}{r}}^{\frac{1}{r}}$  as  $r \rightarrow \infty$.
One approach is to consider the behaviour of  $C_{\frac{p}{r},\frac{q}{r}}^{\frac{1}{r}}$ as $r \rightarrow \infty$,
but as we are interested in geometrical questions, we prefer a more direct approach. \\

Thus, we will be asking the following questions mainly motivated by the multilinear fractional integrals in Gressman's paper \cite{Gressman}.

\medskip

\hspace{-13pt}{\bf Question 1.} Let $f, g$ be measurable functions defined on $\mathbb{R}$ with Lebesgue measure.
Does there exist $C_{p,q}$ such that for any $0 < p, q <\infty$, $\gamma = \frac{1}{p}+\frac{1}{q}$
$$\|f\|_{p}\  \|g\|_{q} \leq C_{p,q} \ \displaystyle{\sup_{s,t}} \  | f(s)g(t)||s-t|^{\gamma}?  \eqno(1.4)$$

The reason why we take $\gamma = \frac{1}{p}+\frac{1}{q}$ follows from homogeneity.
We assume that $\|f\|_{p}\  \|g\|_{q} \leq C_{p,q} \  \displaystyle{\sup_{s,t}} \  |  f(s)g(t)||s-t|^{\gamma}$  holds. Then consider functions
$f(\frac{\cdot}{R})$, $g(\frac{\cdot}{R})$ for all $R >0$:
$$\|f(\frac{\cdot}{R})\|_{p} = R^{\frac{1}{p}} \|f\|_{p}, \  \|g(\frac{\cdot}{R})\|_{q} = R^{\frac{1}{q}} \|g\|_{p},$$
and
\begin{align*}
\displaystyle{\sup_{s,t}} \  |  f(\frac{s}{R})g(\frac{t}{R})||s-t|^{\gamma}
&= R^{\gamma}  \displaystyle{\sup_{s,t}} \  | f(\frac{s}{R})g(\frac{t}{R})||\frac{s}{R}-\frac{t}{R}|^{\gamma}  \\
&= R^{\gamma} \displaystyle{\sup_{s,t}} \  | f(s)g(t)||s-t|^{\gamma}  .
\end{align*}
So  $$R^{\frac{1}{p}+\frac{1}{q}} \|f\|_{p}\  \|g\|_{p} \leq C_{p,q} \  R^{\gamma} \displaystyle{\sup_{s,t}} \  | f(s)g(t)||s-t|^{\gamma}.$$
This indicates for all $R > 0$,
$$R^{\frac{1}{p}+\frac{1}{q}} \leq  C_{p,q} \  R^{\gamma}$$
which implies $\gamma = \frac{1}{p}+\frac{1}{q}$.

If we consider the simple case when $f$, $g$ are supported in an interval $E \subset \mathbb{R}$, we find
$$\|f\|_{p}\  \|g\|_{q} \leq \|f\|_{\infty} \  \|g\|_{\infty} |E|^{\frac{1}{p} + \frac{1}{q}}
 =  \displaystyle{\sup_{s}} |f(s)|  \ \displaystyle{\sup_{t}} |g(t)| \  \displaystyle{\sup_{s,t}} |s-t|^{\frac{1}{p}+\frac{1}{q}},$$
where $|E|$ is the Lebesgue measure of $E$.
Clearly the right side of (1.4) is in principle smaller than  $\displaystyle{\sup_{s}} |f(s)| \ \displaystyle{\sup_{t}} |g(t)| \ \displaystyle{\sup_{s,t}} |s-t|^{\frac{1}{p}+\frac{1}{q}}$.

We establish that the answer is positive as can be seen in Section 2. More precisely, we prove that if
$f \in L^{p}(\mathbb{R}^{n}), g \in L^{q}(\mathbb{R}^{n})$ then
$$\|f\|_{ L^{p}(\mathbb{R}^{n})}\  \|g\|_{ L^{q}(\mathbb{R}^{n})} \leq C_{p,q,n} \ \displaystyle{\sup_{s,t}} \  | f(s)g(t)||s-t|^{\gamma},$$
for any $0 < p,q <\infty$, $\gamma= \frac{n}{p}+\frac{n}{q}$.

\bigskip

\hspace{-13pt}{\bf Question 2.} What are the analogues of inequality $(1.4)$ in more general settings?

 We investigate what inequality (1.4) would be like in general metric space with a certain geometric measure as shown in Theorem 2.1.

\bigskip

\hspace{-13pt}{\bf Question 3.} Furthermore based on Question 1 and 2,  what are the  multilinear analogues of inequality (1.4)?

Below we give two possible multilinear versions of inequality (1.4). \\
Firstly, let $f_{j}$ be measurable functions defined on $\mathbb{R}^{n}$ with Lebesgue measure.
Does there exist a finite constant $C$ independent of functions $f_{j}$ such that the following  multilinear geometric inequality  (1.5) holds for any $0 < p_{j} <\infty$,
$j=1, \dots, n+1$, $\gamma=\displaystyle{ \sum_{j=1}^{n+1} } \frac{1}{p_{j}}$ ,
$$  \displaystyle{\prod_{j=1}^{n+1}} \|f_{j}\|_{p_{j}}
\leq C \ \displaystyle{\sup_{y_{j}}}  \displaystyle{\prod_{j=1}^{n+1}} | f_{j}(y_{j})| \det(y_{1}, \dots, y_{n+1})^{\gamma}?  \eqno(1.5)$$
The condition $\gamma=\displaystyle{ \sum_{j=1}^{n+1} } \frac{1}{p_{j}}$   follows from homogeneity.  \\
Here the notation $\det(y_{1}, \dots, y_{n+1})$
denotes $n!$ times the Euclidean $n$-dimensional volume of the simplex with vertices $y_{1}, \dots, y_{n+1}$,
so $\det(y_{1}, \dots, y_{n+1}) \geq 0$ throughout the paper.

Furthermore, combining with  Gressman's work  \cite{Gressman}  we investigate what inequality (1.5) would be like in more general settings apart from in the Euclidean space cases , for instance, in a real  finite-dimensional Hilbert space $H$ with a certain geometric measure as  discussed in  \cite{Gressman}.

\medskip

The second possible multilinear form we study is to replace the determinant form by ``product form" as follows.

Let $f_{j}$  be measurable functions defined on $\mathbb{R}^{n}$, $ r_{12},  r_{13},  r_{23}>0 $.
Does there exist a finite constant $C$ independent of the functions $f_{j}$ such that
for any $0< p_{j}< \infty$ satisfying $\displaystyle{ \sum_{j=1}^{3}   }  \frac{1}{p_{j}}= \frac{1}{n} ( r_{12} +r_{13}+ r_{23})$ ,
$$ \|f_{1}\|_{p_{1}}  \|f_{2}\|_{p_{2}}  \|f_{3}\|_{p_{3}}
\leq C \ \displaystyle{\sup_{y_{j}}}  \prod_{j=1}^{3}  f_{j}(y_{j})   |y_{1}-y_{2}|^{r_{12}}  |y_{1}-y_{3}|^{r_{13}}   |y_{2}-y_{3}|^{r_{23}} ? \eqno(1.6)$$
Homogeneity requires   $\displaystyle{ \sum_{j=1}^{3}   }  \frac{1}{p_{j}}= \frac{1}{n} ( r_{12} +r_{13}+ r_{23})$.

\bigskip

\hspace{-13pt}{\bf Question 4.} Do there exist sharp versions and optimisers for these geometric inequalities above?

The main results under this heading are Theorem 4.1 and Theorem 4.6. \\

The purpose of this paper is to study these  geometric  inequalities related to fractional integration in these questions above.
The results can be discussed  as follows.
Section 2 is devoted to studying the  bilinear geometric inequalities raised in Question 1 and 2,
and the main results are established in Theorem 2.1 and Corollary 2.3.
In Section 3, we give two analogues of multilinear form, such as the determinant form as shown in Theorem 3.1, Theorem 3.3
and the product form shown in Theorem 3.5, Theorem 3.7.
In Section 4, in the  Euclidean space setting  we prove the existence of extremal functions for the geometric inequality bilinear form (1.4) when $p=q$
and for the multilinear form (1.5)  when $p_{j}=p, j=1, \dots , n+1$.
Meanwhile, we get the corresponding conformally equivalent formulations in unit sphere space $\mathbb{S}^{n}$  and  in  hyperbolic space $\mathbb{H}^{n}$.

Throughout this paper $\sup$ is the essential supremum of function. $|\cdot|$ denotes the Lebesgue measure on Euclidean space $\mathbb{R}^{n}$  and the norm in a Hilbert space.
$A \lesssim B$ means there exists  a positive constant $C$ independent of the essential variables such that $A \leq C B$.
$A \sim B$ means  there exist  positive constants $C, C^{\prime}$  independent of functions such that $C^{\prime}  B \leq A \leq C B$.
For the rest of this paper, all functions considered are nonnegative.

\bigskip

\section{Bilinear forms of geometric inequalities}

Let  $(M, \ d)$ be a metric space  and  $\mu$  a $\sigma$-finite nonnegative Borel measure on $M$.
Let  $f, g$ be nonnegative measurable functions defined on $M$.
To answer Question 1, 2 we consider the two conditions:

\bigskip

(i)\  For any $x \in M, r>0$, $\mu$ satisfies
$$\mu(B(x, r))\leq C_{\alpha} r^{\alpha} \eqno(2.1)$$
 with a finite constant $C_{\alpha}$, $ \alpha >0$ .

(ii)\ $$\|f\|_{L^{p}(d \mu)}\  \|g\|_{L^{q}(d \mu)} \leq  C_{p,q, \gamma}
\displaystyle{\sup_{s,t}} \ f(s)g(t)d(s, t)^{\gamma}  \eqno(2.2)$$
holds for all nonnegative functions $f \in L^{p}(d \mu), g \in L^{q}(d \mu)$
with a finite constant $C_{p,q,\gamma}$  independent of the funtions $f, g$.

\bigskip

The main results are as follows.

\begin{theorem}
Let $(M, \ d)$ be a metric space and $\mu$ a $\sigma$-finite, nonnegative Borel measure on $M$.

(a)\ If condition (i) holds,
then (ii) holds for all nonnegative functions $f \in L^{p}(d \mu), g \in L^{q}(d \mu)$ for all $0 < p, q <\infty$, $\gamma$ such that $\gamma= \alpha(\frac{1}{p}+\frac{1}{q})$.

(b)\ If condition (ii) holds for all nonnegative functions $f \in L^{p}(d \mu), g \in L^{q}(d \mu)$ for some $p, q>0, \gamma >0$,
then condition (i) holds for all $\alpha$ such that $\alpha= \gamma (\frac{1}{p}+\frac{1}{q})^{-1}$.

\end{theorem}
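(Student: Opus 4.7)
For part (a), write $A=\sup_{s,t} f(s)g(t)d(s,t)^{\gamma}$ for the essential supremum appearing on the right of (ii); we may assume $A<\infty$. The plan is a layer-cake argument driven by a single geometric observation. Put $E_{\lambda}=\{f>\lambda\}$ and $F_{\sigma}=\{g>\sigma\}$. Whenever both sets have positive measure, choose $t_0\in F_{\sigma}$; then for every $s\in E_{\lambda}$,
$$\lambda\sigma\, d(s,t_0)^{\gamma}\le f(s)g(t_0)d(s,t_0)^{\gamma}\le A,$$
hence $E_{\lambda}\subset B\bigl(t_0,(A/(\lambda\sigma))^{1/\gamma}\bigr)$, and condition (i) gives
$$\mu(E_{\lambda})\le C_{\alpha}\bigl(A/(\lambda\sigma)\bigr)^{\alpha/\gamma}.$$

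Before exploiting this I would reduce to the case $\|f\|_{\infty},\|g\|_{\infty}<\infty$ by truncating $f,g$ at height $N$ and applying monotone convergence at the end; note that if $\|g\|_{\infty}=\infty$ the inequality above holds for every $\sigma>0$ and forces $\mu(E_{\lambda})=0$ for all $\lambda>0$, making the statement trivial. Letting $\sigma\nearrow\|g\|_{\infty}$ gives $\mu(E_{\lambda})\le C_{\alpha}A^{\alpha/\gamma}\|g\|_{\infty}^{-\alpha/\gamma}\lambda^{-\alpha/\gamma}$. Plugging this into the layer-cake identity on $(0,\|f\|_{\infty})$, and using that $p>\alpha/\gamma=pq/(p+q)$ so the integral converges near zero, yields
$$\|f\|_{p}^{p}\lesssim A^{\alpha/\gamma}\,\|g\|_{\infty}^{-\alpha/\gamma}\,\|f\|_{\infty}^{\,p-\alpha/\gamma}.$$
By symmetry $\|g\|_{q}^{q}\lesssim A^{\alpha/\gamma}\|f\|_{\infty}^{-\alpha/\gamma}\|g\|_{\infty}^{\,q-\alpha/\gamma}$. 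Take $p$th and $q$th roots and multiply: the homogeneity relation $\gamma=\alpha(\tfrac1p+\tfrac1q)$ makes the exponent of $A$ collapse to $\alpha/(p\gamma)+\alpha/(q\gamma)=1$, while the exponents of $\|f\|_{\infty}$ and $\|g\|_{\infty}$ both become zero. The resulting bound is $\|f\|_{p}\|g\|_{q}\lesssim A$, which is (ii).

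For part (b) the approach is much lighter: insert the test functions $f=g=\chi_{B(x,r)}$ into (ii). The left side becomes $\mu(B(x,r))^{1/p+1/q}$, and by the triangle inequality
$$\sup_{s,t} f(s)g(t)d(s,t)^{\gamma}=\sup_{s,t\in B(x,r)}d(s,t)^{\gamma}\le (2r)^{\gamma}.$$
So (ii) gives $\mu(B(x,r))^{1/p+1/q}\le C_{p,q,\gamma}(2r)^{\gamma}$; raising to the power $(1/p+1/q)^{-1}$ produces $\mu(B(x,r))\le C'r^{\alpha}$ with $\alpha=\gamma(\tfrac1p+\tfrac1q)^{-1}$, as claimed.

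The step I expect to require the most attention is the exponent bookkeeping in part (a): the cancellation of $\|f\|_{\infty}$ and $\|g\|_{\infty}$ is not obvious a priori and relies precisely on the scaling condition $\gamma=\alpha(\tfrac1p+\tfrac1q)$; I would organise the computation so that the symmetry between $f$ and $g$ is manifest, and I would carry out the truncation first so that all one-dimensional $\lambda$-integrals are genuinely finite before taking the limit $N\to\infty$.
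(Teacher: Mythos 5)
Your proof is correct and follows essentially the same route as the paper: your distribution-function bound $\mu(\{f>\lambda\})\lesssim \bigl(A/(\lambda\|g\|_{\infty})\bigr)^{\alpha/\gamma}$ is exactly the content of the paper's endpoint Lemma 2.2 (the weak-type bound $\|f\|_{L^{m,\infty}}\|g\|_{L^{\infty}}\lesssim A$ with $m=\alpha/\gamma$), your layer-cake integration against $\|f\|_{\infty}$ is the paper's interpolation step, with your truncation at height $N$ playing the role of the paper's reduction to simple functions, and part (b) is identical. The one point to make explicit is that, since the supremum in (ii) is an essential supremum, you cannot take an arbitrary $t_{0}\in F_{\sigma}$: you must choose $t_{0}\in F_{\sigma}$ whose slice of the exceptional null set in $M\times M$ is $\mu$-null (possible because $\mu\otimes\mu$ of that set is zero and $\mu(F_{\sigma})>0$, as the paper spells out), after which $f(s)g(t_{0})d(s,t_{0})^{\gamma}\le A$ holds for $\mu$-a.e.\ $s$, the containment of $E_{\lambda}$ in a ball holds up to a null set, and your argument proceeds unchanged.
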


We begin by studying an endpoint case of (2.2)  in  the following Lemma 2.2,  before studying Theorem 2.1  itself.

\begin{lemma}
Let $f, g$  be nonnegative measurable functions defined on a metric space $(M, \ d)$ with the $\sigma$-finite and nonnegative Borel measure $\mu$
which satisfies $\mu(B(x, r))\leq C_{\alpha} r^{\alpha}$ for any $x \in M, r>0$.
Then  for  all $0 < p, q <\infty$  we have
$$\|f\|_{L^{p, \infty}(d \mu) }   \|g\|_{L^{\infty} (d \mu) } \leq  C_{p, \alpha } \displaystyle{\sup_{s,t}} \ f(s)g(t)d(s, t)^{\frac{\alpha}{p}}.  \eqno(2.3)  $$
$$\|f\|_{L^{\infty} (d \mu) }\  \|g\|_{L^{q, \infty}(d \mu)}  \leq C_{q, \alpha }\displaystyle{\sup_{s,t}} \ f(s)g(t)d(s, t)^{\frac{\alpha}{q}}.  \eqno(2.4) $$

\end{lemma}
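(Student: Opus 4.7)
The plan is to prove (2.3) directly; (2.4) then follows by interchanging the roles of $f$ and $g$ and replacing $p$ by $q$. Write $M := \sup_{s,t} f(s) g(t) d(s,t)^{\alpha/p}$ and $G := \|g\|_{L^{\infty}(d\mu)}$, and assume $M < \infty$ and $G > 0$ (otherwise the inequality is trivial).

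The key idea is that an $L^{\infty}$ control on $f(s)g(t)d(s,t)^{\alpha/p}$ forces the super-level sets of $f$ to be concentrated near any point $t^{*}$ where $g$ is essentially large. Fix $\varepsilon>0$. Since $G$ is the essential supremum of $g$, the set $E_{\varepsilon} := \{t : g(t) > G - \varepsilon\}$ has positive $\mu$-measure. By Fubini applied to the two-variable essential supremum, for $\mu$-a.e.\ $t$ the slice $s \mapsto f(s) g(t) d(s,t)^{\alpha/p}$ has essential supremum at most $M$. I would therefore pick $t^{*} \in E_{\varepsilon}$ satisfying this slice bound, obtaining
$$f(s) \, d(s,t^{*})^{\alpha/p} \leq \frac{M}{G - \varepsilon} \quad \text{for $\mu$-a.e. } s.$$

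Next, I would estimate the distribution function of $f$. For $\lambda > 0$ set $A_{\lambda} := \{s : f(s) > \lambda\}$. For $\mu$-a.e.\ $s \in A_{\lambda}$ the previous display yields $\lambda \, d(s,t^{*})^{\alpha/p} < M/(G-\varepsilon)$, so $A_{\lambda} \subseteq B(t^{*}, R_{\lambda})$ with $R_{\lambda} = \bigl(M/(\lambda(G-\varepsilon))\bigr)^{p/\alpha}$. Condition (2.1) then gives
$$\mu(A_{\lambda}) \leq C_{\alpha} R_{\lambda}^{\alpha} = C_{\alpha} \left(\frac{M}{\lambda(G-\varepsilon)}\right)^{p},$$
and hence $\lambda \, \mu(A_{\lambda})^{1/p} \leq C_{\alpha}^{1/p} M / (G - \varepsilon)$. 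Taking the supremum over $\lambda$ and letting $\varepsilon \to 0$ produces (2.3) with $C_{p,\alpha} = C_{\alpha}^{1/p}$.

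The only subtle point is the measure-theoretic selection of $t^{*}$, where one must avoid a double null set coming both from the essential supremum of $g$ and from the two-variable essential supremum. This is a routine manoeuvre and not a serious obstacle; the heart of the argument is really the simple observation that an $L^{\infty}$ bound on $f(s)g(t)d(s,t)^{\alpha/p}$ converts, via the polynomial volume growth (2.1), into a weak-type bound on $f$.
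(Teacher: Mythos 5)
Your proposal is correct and follows essentially the same route as the paper's proof: select a point $t^{*}$ where $g$ exceeds $\|g\|_{\infty}-\varepsilon$ and where the slice bound $f(s)\,d(s,t^{*})^{\alpha/p}\leq M/(G-\varepsilon)$ holds $\mu$-a.e.\ (the paper does exactly this via the null set $E$ and the Fubini-type slicing you mention), then contain each super-level set of $f$ in a ball around $t^{*}$, apply the growth condition (2.1), and let $\varepsilon\to 0$, with (2.4) by symmetry. The measure-theoretic selection of $t^{*}$ that you flag as routine is precisely the step the paper spells out, so nothing essential is missing.
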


\begin{proof}
If $\displaystyle{\sup_{s,t}} \  f(s)g(t)d(s, t)^{\frac{\alpha}{p}}  = \infty$, then the inequality (2.3) is trivial. \\
If $\displaystyle{\sup_{s,t}} \  f(s)g(t)d(s, t)^{\frac{\alpha}{p}}  = A < \infty$,
there exists  a measure zero set $E \subset  M \times M$, $\mu \otimes \mu (E)=0$, such that for any $(s, t) \in (M \times M )\setminus E$
$$f(s)g(t)d(s, t)^{\frac{\alpha}{p}} \leq A.  \eqno(2.5) $$
Note that for any $\varepsilon > 0$, there exists  $F \subset  M$, $\mu (F) > 0$, such that for all $t \in F$
$$ g(t) > \|g\|_{L^{\infty}(d \mu)}-\varepsilon.   \eqno(2.6)$$
It follows from (2.5) and (2.6)  that  for all $(s, t) \in (M \times F) \setminus E$
$$f(s) \leq \frac{A}{d(s, t)
^{\frac{\alpha}{p}} (\|g\|_{\infty} - \varepsilon ) }.$$
So we can choose a $t \in F$ such that for any $\beta > 0$,
$$ \mu (\{ s: f(s) > \beta \}) \leq  \mu (\{ s: d(s, t)^{\frac{\alpha}{p}} < \frac{A}{\beta (\|g\|_{\infty}-\varepsilon )} \} ).$$
This is because $\mu \otimes \mu (E)=0$ implies that for almost every $t \in M$,
$$\mu( \{s \in M: (s, t) \in E\})=0.$$
And since  $\mu (F) >0$,   we can find $t \in F$ such
that $(s, t) \in  (M \times F) \setminus E$ for almost every $s \in M$.

Calculate  the weak $ L^{p}$  ``norm" of $f$,
\begin{align*}
\|f\|_{L^{p, \infty}(d \mu)}
&= \displaystyle{\sup_{\beta> 0}} \ \beta \ \mu (\{ s: f(s) > \beta \} )^{\frac{1}{p}}  \\
&\leq \displaystyle{\sup_{\beta > 0}} \ \beta  \ \mu ( \{ s: d(s, t)^{\frac{k \alpha}{p}} < \frac{A}{\beta (\|g\|_{\infty}-\varepsilon )} \} )^{\frac{1}{p}} \\
&= \displaystyle{\sup_{\beta > 0}} \ \beta \ \mu ( \{ s: d(s, t) < (\frac{A}{\beta (\|g\|_{\infty}-\varepsilon )} )^{ \frac{p}{\alpha} }  \} )^{\frac{1}{p}}.
\end{align*}
Since $\mu(B(x, r))\leq C_{\alpha}  r^{\alpha}$ for any $x\in M, r>0$,
\begin{center}
$ \mu (\{s: d(s, t) < r \} ) \leq C_{\alpha} r^{\alpha}$.
\end{center}
Hence
$$\mu ( \{ s: d(s, t) < (\frac{A}{\beta (\|g\|_{\infty}-\varepsilon )} )^{ \frac{p}{\alpha} }  \} )
\leq  C_{\alpha} (\frac{A}{\beta (\|g\|_{\infty}-\varepsilon )} )^{p}.$$
Then we get
\begin{align*}
\|f\|_{L^{p, \infty}(d \mu)}
&\leq \displaystyle{\sup_{\beta > 0}} \ \beta \ \mu ( \{ s: d(s, t) < (\frac{A}{\beta (\|g\|_{\infty}-\varepsilon )} )^{ \frac{p}{\alpha} }  \} )^{\frac{1}{p}} \\
&\leq C_{\alpha}^{\frac{1}{p}}
\displaystyle{\sup_{\beta > 0}} \ \beta \ \frac{A}{\beta (\|g\|_{\infty}-\varepsilon )}  \\
&= C_{\alpha}^{\frac{1}{p}} \frac{A}{ \|g\|_{\infty}-\varepsilon },
\end{align*}
that is
$$\|f\|_{L^{p, \infty}(d \mu) } ( \|g\|_{L^{\infty}(d \mu)} - \varepsilon  )  \leq C_{\alpha}^{\frac{1}{p}}  A. $$
Let $\varepsilon  \rightarrow 0$, we have
$$\|f\|_{L^{p, \infty}(d \mu) }   \|g\|_{L^{\infty} (d \mu) }   \leq C_{\alpha}^{\frac{1}{p}}  A = C_{\alpha}^{\frac{1}{p}}  \displaystyle{\sup_{s,t}} \  f(s)g(t)d(s, t)^{\frac{\alpha}{p}}.$$
Likewise,
$$\|f\|_{L^{\infty} (d \mu) }\  \|g\|_{L^{q, \infty}(d \mu)}  \leq
C_{\alpha}^{\frac{1}{q}}  \displaystyle{\sup_{s,t}} \  f(s)g(t)d(s, t)^{\frac{\alpha}{q}}.$$
\end{proof}

\bigskip

\hspace{-13pt}{\it Proof of Theorem 2.1}\quad

(a)\ Suppose condition (i) holds, that is $\mu(B(t, r))\leq C r^{\alpha}$ holds for any $t \in M, r>0$.
Let $m= \frac{1}{\frac{1}{p}+\frac{1}{q}}$, so $m < p, \ q <\infty$. \\
Then by the layer cake representation
\begin{align*}
\|f\|_{L^{p}(d \mu)}^{p} &=
p \int_{0}^{\infty} \beta^{p-1} \mu (\{s: f(s) > \beta \} ) \ d\beta \\
&= p \int_{0}^{\|f\|_{L^{\infty}(d \mu)}} \beta^{p-1} \mu ( \{s: f(s) > \beta \} ) \ d\beta +
p \int_{\|f\|_{L^{\infty}(d \mu)}}^{\infty} \beta^{p-1} \mu( \{s: f(s) > \beta \} ) \ d\beta \\
&= p \int_{0}^{\|f\|_{L^{\infty}(d \mu)}} \beta^{p-m-1} \beta^{m} \mu ( \{s: |f(s)| > \beta \} ) \ d\beta \\
&\leq p \ \|f\|_{L^{m, \infty}(d \mu)}^{m} \int_{0}^{\|f\|_{L^{\infty}(d \mu)}} \beta^{p-m-1} \ d\beta \\
&= \frac{p}{p-m} \ \|f\|_{L^{m, \infty}(d \mu)}^{m} \|f\|_{L^{\infty}(d \mu)}^{p-m},
\end{align*}
which means for $f$ in $L^{m, \infty}(d \mu)\cap L^{\infty}(d \mu)$, we have $f \in L^{p}(d \mu)$, and
$$\|f\|_{L^{p}(d \mu)} \leq (\frac{p}{p-m})^{\frac{1}{p}} \ \|f\|_{L^{m, \infty}(d \mu)}^{\frac{m}{p}} \|f\|_{L^{\infty}(d \mu)}^{1-\frac{m}{p}}.  \eqno(2.7)$$
Meanwhile if $g$ is in $L^{m, \infty}(d \mu)\cap L^{\infty}(d \mu)$, then $g \in L^{q}(d \mu)$, and
$$\|g\|_{L^{q}(d \mu)} \leq (\frac{q}{q-m})^{\frac{1}{q}} \ \|g\|_{L^{m, \infty}(d \mu)}^{\frac{m}{q}} \|g\|_{L^{\infty}(d \mu)}^{1-\frac{m}{q}}.  \eqno(2.8)$$

\bigskip

Since simple functions are in $L^{m, \infty}(d \mu) \cap L^{\infty}(d \mu)$, we can apply Lemma 2.2 for simple functions $f, g$.
Inequalities  (2.7) and (2.8) indicate
\begin{align*}
\|f\|_{L^{p}(d \mu)}\  \|g\|_{L^{q}(d \mu)} &\leq (\frac{p}{p-m})^{\frac{1}{p}} \ (\frac{q}{q-m})^{\frac{1}{q}}
\|f\|_{m, \infty}^{\frac{m}{p}} \|f\|_{\infty}^{1-\frac{m}{p}} \|g\|_{m, \infty}^{\frac{m}{q}}   \|g\|_{\infty}^{1-\frac{m}{q}} \\
&=  (\frac{p}{p-m})^{\frac{1}{p}} \ (\frac{q}{q-m})^{\frac{1}{q}}
( \|f\|_{m, \infty}^{\frac{m}{p}}   \|g\|_{\infty}^{1-\frac{m}{q}} )  ( \|g\|_{m, \infty}^{\frac{m}{q}}    \|f\|_{\infty}^{1-\frac{m}{p}} ).
\end{align*}
It follows from Lemma 2.2 that
 $$\|f\|_{m, \infty}^{\frac{m}{p}}   \|g\|_{\infty}^{1-\frac{m}{q}}
 =  \|f\|_{m, \infty}^{\frac{m}{p}}   \|g\|_{\infty}^{\frac{m}{p}}
 \leq C_{\alpha}^{\frac{1}{p}}   \displaystyle{\sup_{s,t}}  \ ( f(s)g(t)d(s, t)^{\frac{\alpha}{m}} )^{\frac{m}{p}},$$
and
$$\|g\|_{m, \infty}^{\frac{m}{q}}    \|f\|_{\infty}^{1-\frac{m}{p}} =  \|g\|_{m, \infty}^{\frac{m}{q}}    \|f\|_{\infty}^{\frac{m}{q}}
 \leq C_{\alpha}^{\frac{1}{q}}  \displaystyle{\sup_{s,t}} \  ( f(s)g(t)d(s, t)^{\frac{\alpha}{m}} )^{\frac{m}{q}}.$$
Therefore
\begin{align*}
\|f\|_{L^{p}(d \mu)}\  \|g\|_{L^{q}(d \mu)}
&\leq C_{\alpha}^{\frac{1}{p}+\frac{1}{q}} (\frac{p}{p-m})^{\frac{1}{p}} \ (\frac{q}{q-m})^{\frac{1}{q}}
 \displaystyle{\sup_{s,t}} ( f(s)g(t)d(s, t)^{\frac{\alpha}{m}} ) ^{\frac{m}{p}}
\displaystyle{\sup_{s, t}} ( f(s)g(t)d(s, t)^{\frac{\alpha}{m}} ) ^{\frac{m}{q}} \\
&=  C_{\alpha}^{\frac{1}{p}+\frac{1}{q}} (\frac{p+q}{p})^{\frac{1}{p}} (\frac{p+q}{q})^{\frac{1}{q}}
\displaystyle{\sup_{s, t}} \ f(s)g(t)d(s, t)^{\frac{\alpha}{p}+\frac{\alpha}{q}}.
\end{align*}

For  general functions $f\in L^{p}(d \mu),  g\in L^{q}(d \mu)$, there exist sequences of  simple functions  $\{f_{n}\} \uparrow f$, and
$\{g_{n}\} \uparrow g$ as $n \rightarrow \infty$.
Under the discussion above, we have already obtained that (2.2) holds for simple functions,
$$\|f_{n}\|_{L^{p}(d \mu)}\  \|g_{n}\|_{L^{q}(d \mu)} \leq C_{p,q,\alpha}
\displaystyle{\sup_{s,t}} \ f_{n}(s) g_{n}(t) d(s, t)^{ \alpha(\frac{1}{p}+\frac{1}{q})}
\leq C_{p,q,\alpha} \displaystyle{\sup_{s, t}} \ f(s)g(t)d(s, t)^{\alpha(\frac{1}{p}+\frac{1}{q})}.$$
Then let $n \rightarrow \infty$, we have
$$\|f\|_{L^{p}(d \mu)}\  \|g\|_{L^{q}(d \mu)} \leq C_{p,q,\alpha} \displaystyle{\sup_{s, t}} \ f(s)g(t)d(s, t)^{ \alpha(\frac{1}{p}+\frac{1}{q})}.$$
(b)\ Suppose $\|f\|_{L^{p}(d \mu)}\  \|g\|_{L^{q}(d \mu)} \leq C_{p,q, \gamma}
\displaystyle{\sup_{s,t}} \ f(s)g(t)d(s, t)^{\gamma}$ holds for some
$p, q >0$, $\gamma$. For any $x \in M, r >0$, let $f=g=\chi_{B(x, r)}$, then we have
$$\mu (B(x, r))^{\frac{1}{p}+\frac{1}{q}}
\leq C_{p,q, \gamma} \displaystyle{\sup_{s, t \in B(x, r)}} d(s, t)^{ \gamma}.$$
Together with the fact
$$\displaystyle{\sup_{s, t \in B(x, r)}} d(s, t) \leq 2r $$
we deduce that $\mu$ has the  property
$$\mu (B(x, r)) \leq C_{\alpha} r^{\alpha},$$
where $\alpha= \gamma (\frac{1}{p}+\frac{1}{q})^{-1}$.

$\hfill\Box$

\bigskip

\begin{corollary}
Let $f, g$  be measurable functions defined on $\mathbb{R}^{n}$ with Lebesgue measure, then
for all $0 < p, q <\infty$, $\gamma >0$ such that $\gamma = n(\frac{1}{p}+\frac{1}{q})$,
$$\|f\|_{L^{p}(\mathbb{R}^{n}) }   \|g\|_{L^{q} (\mathbb{R}^{n})}  \leq C_{p,q,n}  \displaystyle{\sup_{s,t}} \ f(s)g(t)|s-t|^{\gamma} .   \eqno(2.9)$$
On the other hand, it is not true that
$$\|f\|_{L^{p}(\mathbb{R}^{n}) }   \|g\|_{L^{\infty} (\mathbb{R}^{n}) }  \lesssim   \displaystyle{\sup_{s,t}} \ f(s)g(t)|s-t|^{\frac{n}{p}}.  \eqno(2.10)  $$
holds  for all $ f\in L^{p}(\mathbb{R}^{n}), g \in L^{\infty} (\mathbb{R}^{n})$.
\end{corollary}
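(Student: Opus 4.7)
The plan is to deduce (2.9) directly from Theorem 2.1(a) and to refute (2.10) by an explicit family of counterexamples.

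For the positive part, Lebesgue measure on $\mathbb{R}^{n}$ satisfies $|B(x,r)| = c_{n} r^{n}$, so condition (i) of Theorem 2.1 holds with $\alpha = n$. Part (a) of that theorem, applied with $\gamma = n(\frac{1}{p} + \frac{1}{q})$, then yields (2.9) immediately.

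For the negative part, the heuristic is that the constraint $f(s)g(t)|s-t|^{n/p} \leq A$ together with $g$ being essentially bounded below on a set of positive measure forces a pointwise bound of the form $f(s) \lesssim (1+|s|)^{-n/p}$; this borderline profile is precisely what saturates the weak-$L^{p}$ endpoint of Lemma 2.2, and it lies just outside strong $L^{p}$. Concretely, I would take $g = \chi_{B(0,1)}$, so $\|g\|_{\infty} = 1$, and
\[
f_{R}(s) = (1+|s|)^{-n/p}\,\chi_{B(0,R)}(s).
\]
Then $f_{R}$ is bounded and compactly supported, hence $f_{R} \in L^{p}(\mathbb{R}^{n})$; a polar-coordinates computation gives $\|f_{R}\|_{p}^{p} \sim \log R$, which tends to infinity as $R \to \infty$. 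On the other hand, for $s \in B(0,R)$ and $t \in B(0,1)$ the triangle inequality gives $|s-t| \leq 1+|s|$, so
\[
f_{R}(s)\,g(t)\,|s-t|^{n/p} \leq (1+|s|)^{-n/p}(1+|s|)^{n/p} = 1,
\]
and the right-hand side of (2.10) is $\leq 1$ uniformly in $R$. No finite constant can therefore make (2.10) hold for the whole family $\{(f_{R}, g)\}_{R \geq 1}$.

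The only non-routine step is the identification of the counterexample: once the profile $(1+|s|)^{-n/p}$ is in hand the verifications are immediate, but discovering it requires reading off the natural extremiser from the weak-$L^{p}$ estimate of Lemma 2.2 and noticing that its truncated strong $L^{p}$-norm diverges logarithmically while the supremum remains bounded.
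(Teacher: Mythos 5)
Your proposal is correct and follows essentially the same route as the paper: (2.9) is deduced from Theorem 2.1(a) using $|B(x,r)|\leq C_{n}r^{n}$, and (2.10) is refuted with the same counterexample family $f\sim(1+|s|)^{-n/p}$ truncated to a large ball against $g=\chi_{B(0,1)}$, with the supremum bounded by $1$ via $|s-t|\leq 1+|s|$ and the $L^{p}$ norm diverging logarithmically.
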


\begin{proof}
(1) \ Observe that
$|B(x, r)| \leq C_{n} r^{n}$  for any $x\in \mathbb{R}^{n}, r>0$,
then we can apply Theorem 2.1 to give (2.9).
More precisely, $\gamma$ here must be $n(\frac{1}{p}+\frac{1}{q})$ which follows from the homogeneity mentioned in the introduction.

(2) \  We use a counterexample to show that  (2.10) fails. \\
For any positive $N$,  let $f_{N}(s)= (1+|s|)^{-\frac{n}{p}} \chi_{(1 \leq |s| \leq N)}$, $g(t)= \chi_{ (|t| \leq 1)}(t)$.
Then $\|g\|_{L^{\infty} (\mathbb{R}^{n}) }=1$ and
\begin{align*}
\displaystyle{\sup_{s,t}} \  \ f_{N}(s)g(t) \ |s-t|^{\frac{n}{p}}
&= \displaystyle{\sup_{s,t}} \ \frac{|s-t|^{\frac{n}{p}}}{(1+|s|)^{\frac{n}{p}}} \ \chi_{(1 \leq |s| \leq N)}(s)  \  \chi_{ (|t| \leq 1)}(t) \\
&\leq \frac{(|s|+1)^{\frac{n}{p}}}{(1+|s|)^{\frac{n}{p}}} =1.
\end{align*}
While by polar coordinates
$$\|f_{N}\|_{L^{p}(\mathbb{R}^{n}) }^{p}= \int_{1 \leq |s| \leq N} \ \frac{ds}{(1+|s|)^{n}}
=C \int_{1}^{N}   \frac{r^{n-1}}{(1+r)^{n}} dr.  $$
Let $ u=1+r$ to  make the  change of  variables
$$  \int_{1}^{N}   \frac{r^{n-1}}{(1+r)^{n}} dr
= \int_{2}^{N+1}   \frac{(u-1)^{n-1}}{u^{n}} du
\geq  \int_{2}^{N+1}   \frac{1}{2^{n-1} }  \frac{1}{u}  du $$
and
$$\int_{2}^{N+1}   \frac{1}{u}  du = \ln (N+1) -\ln 2 \rightarrow \infty,$$
as $N \rightarrow \infty$.

\end{proof}

\bigskip

\section{Multilinear forms of geometric inequalities}

We first recall some terminology, notation and lemmas which are all given in  \cite{Gressman}.
$(H, \langle \cdot, \cdot \rangle_{H})$ is a  real  finite-dimensional Hilbert space with inner product  $\langle \cdot, \cdot \rangle_{H}$.
For any positive integer $k \leq \mathrm{dim} \  H$,  we use $\det(y_{1}, \dots, y_{k+1})$ to denote the square root of the determinant
of the $k \times k$  Gram matrix $(a_{i,j})_{k \times k}$, where
$$a_{i,j} =\langle y_{i}-y_{k+1},  y_{j}-y_{k+1} \rangle_{H}.$$

Clearly, the Gram matrix $(a_{i,j})_{k \times k}$ is positive semidefinite, since
$(a_{i,j})_{k \times k}$ can be written as $A^{\prime} A$, where $ A$  is the matrix whose $ j$-th column is $ y_{j}-y_{k+1}$,
and $A^{\prime}$ is the transpose of $A$.
So $\det(y_{1}, \dots, y_{k+1}) \geq 0$ throughout the paper.

Especially  in Euclidean  $\mathbb{R}^{k}$ space,  the determinant of the matrix $(a_{i,j})_{k \times k}$ is the square of
the volume of  the parallelotope formed by the vectors $y_{1}, \dots, y_{k+1}$.
Thus, $\det(y_{1}, \dots, y_{k+1})$ is also
$k!$ times the Euclidean $k$-dimensional volume of the simplex with vertices $y_{1}, \dots, y_{k+1}$. \\

\hspace{-13pt}{\bf Definition 1.}  A subset $B \subset H$ is called an ellipsoid when it may be written as
$$ B \equiv \{x\in H:  \displaystyle{\sum_{i}} \frac{ | \langle x-x_{0}, \omega_{i} \rangle |^{2}}{l_{i}^{2}} \leq 1  \}$$
for some $x_{0}\in H$, some orthonormal basis $\{\omega_{i}\}$ of $H$, and lengths $l_{i}\in [0, \infty]$.
For example, $\{(t,0, \dots, 0): t\in \mathbb{R}\} \subset \mathbb{R}^{n}$ is an ellipsoid in  $\mathbb{R}^{n}$.
It could be  written as
$$\frac{ | \langle x, e_{1} \rangle |^{2}}{\infty}+ \frac{ | \langle x, e_{2} \rangle |^{2}}{0}+\cdots+ \frac{ | \langle x, e_{n} \rangle |^{2} }{0} \leq 1 ,$$
where $ l_{1}= \infty,    l_{2}=0, \dots, l_{n}=0$ , $x_{0}=0$,  and $\{ e_{1}, \dots, e_{n}\}$ are the standard orthonormal basis  vectors for $ \mathbb{R}^{n}$.
The  ellipsoid  will be called centred when $x_{0}=0$.
Given an ellipsoid $B \subset H$ and an integer $k$ with $k \leq $ dim $H$,
denote
$$|B|_{k}=\sup \{l_{i_{1}} \dots l_{i_{k}}: i_{1}< i_{2}< \dots <i_{k} \},$$
 which is called the $k$-content of $B$. \\

\hspace{-13pt}{\bf Definition 2.} A $\sigma$-finite and nonnegative Borel measure $\mu$ is called $k$-curved with exponent $\alpha > 0$, if there exists a finite
 constant $C_{\alpha}$ such that
$$\mu(B) \leq C_{\alpha} |B|_{k}^{\alpha} \eqno (3.1)$$
 for all ellipsoids $B$ in $H$.

\bigskip

This kind of geometric measure describes the amount of mass of $\mu$ supported on $k$-dimensional subspaces of $H$.
For instance, the Lebesgue measure  in $\mathbb{R}^{n}$ is $n$-curved with exponent $1$.
It is $k$-curved with exponent $\frac{n}{k}$ as well for $k <n$.
If we see the Lebesgue measure restricted on $x_{1}$ axis, it is $1$-curved with exponent $1$.
It cannot  be $k$-curved for $k \geq 2$.
Let $S$ be a hypersurface in $\mathbb{R}^{n}$ with non-vanishing Gaussian curvature, then
its surface area measure $\mu_{S}$ is $n$-curved with exponent $\frac{n-1}{n+1}$. \\

We now recall some results of Gressman in  \cite{Gressman}. \\

\hspace{-13pt}{\bf Lemma 3.} [1]  Let $\mu$ be a $\sigma$-finite and nonnegative Borel measure such that  (3.1) holds for all ellipsoids $B$ in $H$.
Then for any measurable sets $E_{1}, \dots, E_{k}$ in $H$ we have
$$\mu \otimes \cdots \otimes \mu (\{(y_{1}, \dots, y_{k})\in E_{1} \times \cdots \times E_{k}:
\det(0, y_{1}, \dots, y_{k}) < \delta \})
\leq C_{k, \alpha}  \delta^{\alpha} \displaystyle{\prod_{j=1}^{k}} \mu(E_{j})^{1-\frac{1}{k}}.$$

\hspace{-13pt}{\bf Lemma 4.} [1]  Under the above assumptions,  for any centred ellipsoid $B$ in $H$, we have
$$\displaystyle{\sup_{x_{j}\in B}} \  \det(0, x_{1}, \dots, x_{k}) \leq  C_{k} |B|_{k},  \eqno(3.2)  $$
where $|B|_{k}$ is the $k$-content of $B$. \\

\hspace{-13pt}{\bf Lemma 5.} [1]   Let $f_{j}$ be nonnegative measurable functions defined on a  real  finite-dimensional Hilbert space $H$, and let
$\mu$ be a $\sigma$-finite  nonnegative Borel measure on $H$ which satisfies inequality (3.1).
Then for all $1 \leq p_{j}  \leq \infty$ satisfying $\frac{1}{p_{j}} > 1- \frac{\gamma}{k \alpha}$, $j=1, \dots, k+1$,
and $k+1 - \displaystyle{ \sum_{j=1}^{k+1} }  \frac{1}{p_{j}} =  \frac{\gamma}{\alpha}$,
 $$ \int_{H} \cdots \int_{H} \displaystyle{\prod_{j=1}^{k+1}} f_{j}(y_{j}) \det(y_{1}, \dots,y_{k+1})^{-\gamma} d\mu(y_{1}) \cdots d\mu(y_{k+1})  \leq C \displaystyle{\prod_{j=1}^{k+1}} \|f_{j}\|_{L^{p_{j}}(d \mu)}      \eqno(3.3)$$
holds with a finite constant $C$ independent of the functions $f_{j}$.

\bigskip

The first kind of  multilinear analogue of the fractional integral inequality we start to study is the determinant form as given in the following theorem,
mainly discussing the two conditions with $1 \leq k \leq \mathrm{dim} \  H$ fixed:

\bigskip

(i)\  There exists a finite  constant $C_{\alpha}$ such that for all  ellipsoids $B$ in $H$,
$$\mu(B) \leq C_{\alpha} |B|_{k}^{\alpha}.   \eqno(3.4)$$

(ii)\ $$ \displaystyle{\prod_{j=1}^{k+1}} \|f_{j}\|_{L^{p_{j}}(d \mu)}
\leq C \displaystyle{\sup_{y_{j}}} \ \displaystyle{\prod_{j=1}^{k+1}} \ f_{j}(y_{j}) \det(y_{1}, \dots, y_{k+1})^{\gamma}  \eqno(3.5)$$
for all nonnegative functions $f_{j} \in L^{p_{j}}(d \mu), j=1, \dots, k+1$, where $C$  is a finite constant
independent of functions $f_{j}$ which only depends on $p_{j}, k, \gamma$ .

\bigskip

\begin{theorem}
Let $(H, \langle \cdot, \cdot \rangle_{H})$ be a  real finite-dimensional Hilbert space. Let  $\mu$  be a  $\sigma$-finite  nonnegative Borel measure .

(a)\ If  condition (i) holds,
then (ii) holds for all nonnegative functions$f_{j} \in L^{p_{j}}(d \mu)$, for all $0 <p_{j}< \infty, \gamma$ which satisfy $ \frac{1}{p_{j}} < \frac{\gamma}{k \alpha}$
and $\displaystyle{ \sum_{j=1}^{k+1} } \ \frac{1}{p_{j}}= \frac{\gamma}{\alpha}$,  $j=1, \dots, k+1$.

(b)\ If condition (ii) holds  for all nonnegative functions$f_{j} \in L^{p_{j}}(d \mu)$,  $j=1, \dots, k+1$, for some $p_{j}>0$,  $\gamma >0$,
then condition (i) holds for all $\alpha$ such that
 $\alpha= \gamma (\displaystyle{ \sum_{j=1}^{k+1} } \ \frac{1}{p_{j}})^{-1}$.

\end{theorem}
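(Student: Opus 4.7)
Part (b) is the easier direction. I would substitute $f_j=\chi_B$ for an arbitrary ellipsoid $B\subset H$ into (3.5). The left-hand side becomes $\mu(B)^{\sum_{j=1}^{k+1} 1/p_j}$, and Lemma 4, together with the translation invariance of $\det$ in its arguments (allowing us to recenter $B$ at the origin without changing either side), bounds $\sup_{y_j\in B}\det(y_1,\dots,y_{k+1})$ by a constant multiple of $|B|_k$. Comparing yields $\mu(B)\lesssim |B|_k^{\gamma/\sum_{j=1}^{k+1}1/p_j}$, i.e.\ (3.4) with $\alpha=\gamma\bigl(\sum_{j=1}^{k+1} 1/p_j\bigr)^{-1}$.

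For part (a) I would adopt the three-step framework of Theorem 2.1 --- endpoint, layer cake, combine --- but the endpoint step requires substantial modification. The bilinear endpoint (Lemma 2.2) relied on $\{y_1:d(y_1,y_2)<r\}$ being a ball, directly controlled by (2.1). Its multilinear analog $\{y_1:\det(y_1,\dots,y_{k+1})<\delta\}$ with the other vertices fixed is instead a slab around a hyperplane --- a degenerate ellipsoid of infinite $k$-content --- so (3.4) gives no direct bound, and the naive $L^\infty$-endpoint scheme breaks down. My workaround is to average over a pivot vertex: for each $j^*\in\{1,\dots,k+1\}$, the translation-invariant form of Lemma 3 (valid because (3.4) governs ellipsoids centered anywhere in $H$), with $y_{j^*}$ playing the role of the origin and then integrated against $d\mu(y_{j^*})$ over $E_{j^*}$, produces the asymmetric sublevel estimate
\[
\mu\otimes\cdots\otimes\mu\bigl(\{(y_j)\in E_1\times\cdots\times E_{k+1}:\det(y_1,\dots,y_{k+1})<\delta\}\bigr)\le C\,\delta^{\alpha}\,\mu(E_{j^*})\prod_{j\neq j^*}\mu(E_j)^{1-1/k}.
\]

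For $f_j=\chi_{E_j}$, the right side of (3.5) equals $\Delta^\gamma$ with $\Delta:=\sup_{y_j\in E_j}\det(y_1,\dots,y_{k+1})$, and $E_1\times\cdots\times E_{k+1}\subseteq\{\det\le\Delta\}$; inserting $\delta=\Delta$ in the sublevel estimate yields the corner endpoints $\prod_{j\neq j^*}\mu(E_j)^{1/k}\le C\Delta^\alpha$, one per pivot $j^*$. A weighted geometric mean over the $k+1$ pivots interpolates these to $\prod_{j=1}^{k+1}\mu(E_j)^{1/p_j}\le C\Delta^\alpha$ for all $(p_j)$ with $\sum_{j=1}^{k+1} 1/p_j=1$ and $1/p_j<1/k$; the rescaling $f_j\mapsto f_j^r$ then rewrites the $(p_j,\alpha)$ inequality as a $(p_j/r,r\alpha)$ inequality, extending the characteristic-function result to every admissible $\gamma>0$ with $\sum 1/p_j=\gamma/\alpha$ and $1/p_j<\gamma/(k\alpha)$. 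The principal obstacle, as I see it, is lifting this from characteristic functions to general nonnegative $f_j\in L^{p_j}(d\mu)$: a dyadic decomposition $f_j=\sum_n 2^n\chi_{E_n^{(j)}}$ applied to each function, combined with the shell estimate above, yields the weak-type multilinear bound $\prod_j\|f_j\|_{L^{p_j,\infty}(d\mu)}\le C\sup\prod_j f_j\det^\gamma$, and one would then use the strict margin $1/p_j<\gamma/(k\alpha)$ to upgrade this weak-type bound to the strong-type inequality (3.5) via a multilinear Marcinkiewicz-type interpolation.
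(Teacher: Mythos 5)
Your part (b) is the paper's own argument: substitute $f_j=\chi_B$, use translation invariance of $\det$ and Lemma 4. One small point to make explicit: after recentring, Lemma 4 bounds $\sup\det(0,x_1,\dots,x_k)$ with one vertex pinned at the origin, so you still need the difference step $B-B=(B-x_0)-(B-x_0)\subset 2(B-x_0)$ before invoking it; this costs only the harmless factor $2^{k}$. For part (a), your set-level work is sound and is in essence the paper's route: the paper's Lemma 3.2 is exactly your ``corner endpoint'' (one slot at $L^\infty$, the other $k$ slots at weak $L^{k\alpha/\gamma}$), proved from Gressman's Lemma 3 with a fixed pivot vertex, and the paper then combines the $k+1$ corners by the same weighted geometric mean, using $\sum_{j}\bigl(1-\tfrac{k\alpha}{\gamma p_j}\bigr)=1$. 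Your variant of integrating the translated Lemma 3 over the pivot to get the asymmetric $(k+1)$-fold sublevel estimate is legitimate (translates of ellipsoids are ellipsoids with the same $k$-content, so the translated measure is $k$-curved with the same constant), and it does give the characteristic-function inequality at all admissible exponents.

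The genuine gap is the last step. From characteristic functions, superlevel sets give you only $\prod_j\|f_j\|_{L^{p_j,\infty}(d\mu)}\le C\,A$ at the target tuple, and this cannot be upgraded to the strong-type inequality by ``multilinear Marcinkiewicz-type interpolation'': there is no operator here to interpolate, and, more fundamentally, control of each $f_j$ in a single weak Lorentz norm gives no bound on $\|f_j\|_{L^{p_j}}$ — the strict margin $1/p_j<\gamma/(k\alpha)$ is of no help unless, for each $j$, you control $f_j$ at a second exponent. The only second exponent available from your corner estimates is $\infty$, so what is actually needed is the corner bound for \emph{general} functions, $\prod_{i\neq l}\|f_i\|_{L^{k\alpha/\gamma,\infty}(d\mu)}\,\|f_l\|_{L^{\infty}(d\mu)}\le C\,A$, which you have only established for characteristic functions; for general $f_j$ it requires the essential-supremum pivot argument (choose $y_l$ with $f_l(y_l)>\|f_l\|_\infty-\varepsilon$ outside the relevant null slices, apply Lemma 3 in the remaining variables) — this is precisely the paper's Lemma 3.2. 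Once that is in hand, the finish is elementary rather than an interpolation theorem: for $1/p_j<\gamma/(k\alpha)$ the layer cake gives $\|f_j\|_{L^{p_j}}\lesssim\|f_j\|_{L^{k\alpha/\gamma,\infty}}^{k\alpha/(\gamma p_j)}\|f_j\|_{L^{\infty}}^{1-k\alpha/(\gamma p_j)}$; multiply over $j$ and insert the corner bounds with weights $1-\tfrac{k\alpha}{\gamma p_j}$. Because the $L^\infty$ and weak norms of a general $L^{p_j}$ function may be infinite, one must (as the paper does) first run this for simple functions and then pass to general $f_j$ by monotone approximation. With the ``Marcinkiewicz'' step replaced by this corner-plus-layer-cake argument, your outline closes; as written, it does not.
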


\medskip

If we consider the special case when $k=1$, the condition (3.4) is equivalent to the condition (2.1).
It is clear that (3.4) implies (2.1).
Conversely, suppose $\mu(B(x,r)) \leq C_{\alpha} r^{\alpha} $ holds for any $ x\in H, r>0$.
Given an ellipsoid $K$ centred at $x_{0}$, clearly $K \subset  B(x_{0}, |K|_{1})$. So
$$\mu(K) \leq \mu(B(x_{0}, |E|_{1}))  \leq C_{\alpha}|E|_{1}^{\alpha} ,$$
which gives that $\mu$ is $1$-curved with exponent $\alpha$.

When $k=1$, inequality (3.5) becomes the bilinear form (2.2).
In Section 2 we stated that
$$\|f_{1}\|_{L^{p_{1}}(d \mu)}\  \|f_{2}\|_{L^{p_{2}}(d \mu)} \lesssim  \displaystyle{\sup_{s, t} }  \  f_{1}(s) f_{2}(t)|s-t|^{\gamma}$$
holds for any $0 <p_{1},  p_{2}< \infty$, $ \frac{1}{p_{1}}+ \frac{1}{p_{2}} =  \frac{\gamma}{ \alpha}$.
Note that the condition $\frac{1}{p_{j}} < \frac{\gamma}{k \alpha},  j=1, 2$,  in Theorem 3.1 (a)  is automatic in this case,  since $0 <p_{1},  p_{2}< \infty$.

\bigskip

We begin by studying why condition (ii) implies condition (i). \\

\hspace{-13pt}{\it Proof of Theorem 3.1 (b)}\quad

 Let  $f_{j}=\chi_{B}$, where $B \subset H$ is an ellipsoid centred at $x_{0} \in H$,  $ j=1, \dots, k+1$.
Since condition (ii) holds for some $p_{j}, j=1, \dots, k+1, \gamma$, then we have
$$ \displaystyle{\prod_{j=1}^{k+1}} \| \chi_{B} \|_{L^{p_{j}}(d \mu)} \lesssim
\displaystyle{\sup_{y_{j}\in B}} \det(y_{1}, \dots, y_{k+1})^{\gamma},$$
that is,
$$\displaystyle{\prod_{j=1}^{k+1}} \mu (B)^{\frac{1}{p_{j}}} = \mu (B)^{\sum\limits_{j=1}^{k+1} \frac{1}{p_{j}}}
\lesssim \displaystyle{\sup_{y_{j}\in B}} \det(y_{1}, \dots, y_{k+1})^{\gamma}. \eqno(3.6)$$
We use  a fact that for any centred ellipsoid $E$, $E-E \subset 2E$.
Suppose
$$E= \{x\in H:  \displaystyle{\sum_{i}} \frac{ | \langle x, \omega_{i} \rangle |^{2}}{l_{i}^{2}} \leq 1 \}$$
where $\{\omega_{i}\}$ is the  orthonormal basis of $H$.
Let $y, z \in E$, since for every $\omega_{i}$
$$| \langle y-z, \omega_{i} \rangle |^{2}
= | \langle y, \omega_{i} \rangle - \langle z, \omega_{i} \rangle|^{2}
\leq 2(| \langle y, \omega_{i} \rangle |^{2}+ | \langle z, \omega_{i} \rangle|^{2}),$$
it is easy to verify that
$$y-z \in 2E = \{x\in H:  \displaystyle{\sum_{i}} \frac{ | \langle x, \omega_{i} \rangle |^{2}}{(2 l_{i})^{2}} \leq 1 \}.$$
Thus we have
$$B-B= (B-x_{0})-(B-x_{0})\subset 2(B-x_{0}).$$
Therefore, it follows form Lemma 4 that
\begin{align*}
\sup\limits_{y_{j}\in B} \det(y_{1}, y_{2}, \dots, y_{k}, y_{k+1})
&= \sup\limits_{y_{j}\in B} \det(0, y_{1}-y_{k+1} , y_{2}-y_{k+1}, \dots, y_{k}-y_{k+1}) \\
&\leq \sup\limits_{x_{j}\in 2(B-x_{0})} \det(0, x_{1} , x_{2}, \dots, x_{k}) \\
&\leq 2^{k}C_{k} |B-x_{0}|_{k}=2^{k}C_{k} |B|_{k}.
\end{align*}
So
$$\displaystyle{\sup_{y_{j}\in B}} \det(y_{1}, \dots, y_{k+1})^{\gamma}  \lesssim  |B|_{k}^{\gamma}. $$
Together with (3.6), we conclude that
$$\mu (B)^{\sum\limits_{j=1}^{k+1} \frac{1}{p_{j}}} \lesssim \displaystyle{\sup_{y_{j}\in B}} \det(y_{1}, \dots, y_{k+1})^{\gamma}
\lesssim  |B|_{k}^{\gamma}.$$
So $\mu (B) \lesssim  |B|_{k}^{\alpha}$ with $\alpha= \gamma (\displaystyle{ \sum_{j=1}^{k+1} } \ \frac{1}{p_{j}})^{-1}$. \\

$\hfill\Box$

\bigskip

On the other hand,  in order to see what inequality (3.5) will be like if $\mu$  is $k$-curved with exponent $\alpha$,
we first investigate  an endpoint case of (3.5)  as follows.

\begin{lemma}
Let $f_{j}$ be measurable functions defined on real  finite-dimensional Hilbert space $H$ with the $\sigma$-finite and nonnegative Borel measure $\mu$
which satisfies $\mu(B) \leq C_{\alpha} |B|_{k}^{\alpha}$ for all ellipsoids $B \subset H$.\\
Then for any positive $\gamma$ we have
$$ \displaystyle{\prod_{j=1}^{k}} \|f_{j}\|_{L^{\frac{k \alpha}{\gamma}, \infty}(d \mu)} \|f_{k+1}\|_{L^{\infty}(d \mu)} \leq C_{k,\alpha, \gamma }
\displaystyle{\sup_{y_{j}}} \ \displaystyle{\prod_{j=1}^{k+1}} \ f_{j}(y_{j}) \det(y_{1}, \dots, y_{k+1})^{\gamma}.  \eqno(3.7)  $$
Likewise for each $1 \leq l \leq k+1$
$$ \displaystyle{\prod_{i\neq l} } \|f_{i}\|_{L^{\frac{k \alpha}{\gamma}, \infty}(d \mu)} \|f_{l}\|_{L^{\infty}(d \mu)} \leq C_{k,\alpha, \gamma }
\displaystyle{\sup_{y_{j}}} \ \displaystyle{\prod_{j=1}^{k+1}} \ f_{j}(y_{j}) \det(y_{1}, \dots, y_{k+1})^{\gamma}  \eqno(3.8)$$
holds by symmetry.
\end{lemma}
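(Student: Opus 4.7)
My plan is to mirror the proof of Lemma 2.2, with Gressman's distributional estimate (Lemma 3) playing the role of the elementary ball bound used in the bilinear case. First I would reduce to the case $A := \sup_{y_j} \prod_{j=1}^{k+1} f_j(y_j) \det(y_1, \ldots, y_{k+1})^\gamma < \infty$, outside of which (3.7) is trivial, and fix a $\mu^{\otimes(k+1)}$-null set $E \subset H^{k+1}$ off which the pointwise bound holds with constant $A$. For $\varepsilon > 0$, the set $F := \{f_{k+1} > \|f_{k+1}\|_{L^\infty(d\mu)} - \varepsilon\}$ has positive $\mu$-measure, and by Fubini the slice $E_t := \{(y_1, \ldots, y_k) : (y_1, \ldots, y_k, t) \in E\}$ is $\mu^{\otimes k}$-null for $\mu$-a.e.\ $t$; fixing such a $t \in F$ yields, for $\mu^{\otimes k}$-a.e.\ $(y_1, \ldots, y_k)$,
\[ \prod_{j=1}^{k} f_j(y_j) \det(y_1, \ldots, y_k, t)^\gamma \leq B := \frac{A}{\|f_{k+1}\|_{L^\infty(d\mu)} - \varepsilon}. \]

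Next, for arbitrary $\beta_1, \ldots, \beta_k > 0$, set $E_j := \{f_j > \beta_j\}$. The preceding display forces $\det(y_1, \ldots, y_k, t) \leq \delta := (B/\prod_j \beta_j)^{1/\gamma}$ off a null set on $E_1 \times \cdots \times E_k$. I would then apply a translated version of Gressman's Lemma 3 with $t$ as base point: by symmetry of $\det$ in its arguments, the substitution $z_j = y_j - t$ converts $\det(y_1, \ldots, y_k, t)$ into $\det(0, z_1, \ldots, z_k)$ and the problem into one for the translated measure $\tilde\mu(\cdot) := \mu(\cdot + t)$, which inherits $k$-curvature with the same exponent $\alpha$ since the class of ellipsoids and the $k$-content $|\cdot|_k$ are translation-invariant. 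This produces
\[ \prod_{j=1}^{k} \mu(E_j) = \mu^{\otimes k}(E_1 \times \cdots \times E_k) \leq C_{k,\alpha}\, \delta^{\alpha} \prod_{j=1}^{k} \mu(E_j)^{1-1/k}. \]
Either the left side is zero (in which case the conclusion below is trivial) or one may cancel the factor $\prod_j \mu(E_j)^{1-1/k}$ and raise to the $k$-th power, obtaining $\prod_j \beta_j^{k\alpha/\gamma} \mu(E_j) \leq C_{k,\alpha}^{k} B^{k\alpha/\gamma}$.

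Finally, since the left-hand side factors into $k$ one-variable expressions, taking the supremum in each $\beta_j$ independently yields
\[ \prod_{j=1}^{k} \|f_j\|_{L^{k\alpha/\gamma, \infty}(d\mu)}^{k\alpha/\gamma} \leq C_{k,\alpha}^{k} B^{k\alpha/\gamma}, \]
hence $\prod_{j=1}^{k} \|f_j\|_{L^{k\alpha/\gamma,\infty}(d\mu)} (\|f_{k+1}\|_{L^\infty(d\mu)} - \varepsilon) \leq C_{k,\alpha}^{\gamma/\alpha} A$; letting $\varepsilon \to 0$ gives (3.7). The variant (3.8) follows by relabeling indices, using the full symmetry of $\det(y_1, \ldots, y_{k+1})$ in its arguments, so that the distinguished $L^\infty$ factor can be placed in any slot $l$. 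The step I expect to be most delicate is the justification of the translated form of Lemma 3, since Gressman's statement is written with the origin as base point; however, $k$-curvature is a translation-invariant condition on $\mu$, so this amounts to recording that observation carefully rather than doing new substantive work.
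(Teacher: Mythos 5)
Your proposal is correct and follows essentially the same route as the paper: reduce to finite supremum $A$, pick a good slice point $t=y_{k+1}$ in the set where $f_{k+1}$ nearly attains its essential supremum, bound the superlevel sets $\{f_j>\beta_j\}$ via Gressman's Lemma 3 applied to $\det(0,y_1-t,\dots,y_k-t)$, cancel the $\prod_j\mu(E_j)^{1-1/k}$ factor, take suprema in the $\beta_j$, and let $\varepsilon\to 0$, with (3.8) by symmetry. Your explicit justification of the translated form of Lemma 3 (translation invariance of ellipsoids and of $|\cdot|_k$) is a point the paper passes over silently, but otherwise the two arguments coincide.
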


\begin{proof}
If $\displaystyle{\sup_{y_{j}}} \ \displaystyle{\prod_{j=1}^{k+1}} \ f_{j}(y_{j}) \det(y_{1}, \dots, y_{k+1})^{\gamma} = \infty$, the inequality (3.7) is trivial.
Suppose
$\displaystyle{\sup_{y_{j}}} \ \displaystyle{\prod_{j=1}^{k+1}} \ f_{j}(y_{j}) \det(y_{1}, \dots, y_{k+1})^{\gamma} = A < \infty$,
then there exists measure zero set $E \subset H \times \dots \times H$, $\mu \otimes \cdots \otimes \mu (E)=0$,  such that
$$\displaystyle{\prod_{j=1}^{k+1}} \ f_{j}(y_{j}) \det(y_{1}, \dots, y_{k+1})^{\gamma} \leq A,  \eqno(3.9) $$
 for all $(y_{1}, \dots, y_{k+1}) \in (H \times \dots \times H) \setminus E$.
Note that for any $\varepsilon > 0$, there exists $F \subset H$ such that $\mu (F)>0$, and for all $y_{k+1} \in F$
$$f_{k+1}(y_{k+1}) > \|f_{k+1}\|_{\infty}- \varepsilon. \eqno(3.10)$$
From (3.9) and (3.10) it follows that for $(y_{1}, \dots,y_{k+1}) \in (H \times \dots \times H \times F) \setminus E$,
$$f_{1}(y_{1})  \leq \frac{A}{\|f_{k+1}\|_{\infty}- \varepsilon } \displaystyle{\prod_{j=2}^{k}}  f_{j}(y_{j})^{-1} \det( y_{1}, \dots, y_{k+1})^{-\gamma}.  \eqno(3.11)$$
For any positive $\alpha_{j}$, denote $C_{j}= \{y_{j}: f_{j}(y_{j}) > \alpha_{j}  \}$, $ j=1, \dots,k$.
Note that $\mu \otimes \cdots \otimes \mu (E)=0$, which implies that for almost every $y_{k+1} \in H$,
$$\mu \otimes \cdots \otimes \mu ( \{(y_{1}, \dots, y_{k}):  (y_{1}, \dots, y_{k}, y_{k+1}) \in E  \} ) =0.$$
Denote $\{(y_{1}, \dots, y_{k}):  (y_{1}, \dots, y_{k}, y_{k+1}) \in E  \}$ by $G_{y_{k+1}} \subset H^{k}$.
Since $\mu(F) > 0$,  we can choose a $y_{k+1} \in F$ such that $\mu \otimes \cdots \otimes \mu (G_{y_{k+1}})=0$,
and for all $(y_{1}, \dots, y_{k}) \in H^{k} \setminus G_{y_{k+1}}$
$$(y_{1}, \dots, y_{k}, y_{k+1}) \in (H \times \dots \times H \times F) \setminus E.$$
Since $\mu \otimes \cdots \otimes \mu (G_{y_{k+1}})=0$, for almost every $y_{1} \in H$
 $$\mu \otimes \cdots \otimes \mu  ( \{(y_{2}, \dots, y_{k}) \in H^{k-1} :  (y_{1},y_{2}, \dots, y_{k}) \in G_{y_{k+1}} \} ) =0.$$
That is to say, for  almost every $y_{1}$,  almost every $ (y_{2}, \dots, y_{k}) \in H^{k-1}$
$$ (y_{1},y_{2}, \dots, y_{k}) \in (H \times \dots\times H \times F) \setminus E.$$
Therefore, together with (3.11) implies that for any $\alpha_{1} >0$
\begin{align*}
\ \ \ &\mu (\{y_{1}: f_{1}(y_{1}) > \alpha_{1} \} )  \\
\leq  & \mu ( \{y_{1}: \det(y_{1}, \dots, y_{k+1})^{\gamma}< \frac{A}{\alpha_{1} (\|f_{k+1}\|_{\infty}- \varepsilon)  }  \displaystyle{\prod_{j=2}^{k}}  f_{j}(y_{j})^{-1} ,  \ (y_{2}, \dots, y_{k}) \in H^{k-1} \ a.e. \  \} ).
\end{align*}
Due to the definition of $C_{j}$, we get for any $\alpha_{1} >0 $,
\begin{align*}
&\ \ \ \mu (\{y_{1}: f_{1}(y_{1}) > \alpha_{1} \} )  \\
&\leq \mu ( \{y_{1} \in C_{1} : \det(y_{1}, \dots, y_{k+1})^{\gamma}< \frac{A}{\alpha_{1} (\|f_{k+1}\|_{\infty}- \varepsilon)}
\displaystyle{\prod_{j=2}^{k}} \alpha_{j}^{-1} ,
\ (y_{2}, \dots, y_{k}) \in C_{2} \times \dots \times C_{k} \ a.e. \  \} ) \\
&\leq \mu ( \{y_{1} \in C_{1} : \det(y_{1}, \dots, y_{k+1}) < (\frac{A}{\alpha_{1}
(\|f_{k+1}\|_{\infty}- \varepsilon)})^{\frac{1}{\gamma}}
\displaystyle{\prod_{j=2}^{k}} \alpha_{j}^{-\frac{1}{\gamma}} ,
\ (y_{2}, \dots, y_{k}) \in C_{2} \times \dots \times C_{k} \ a.e. \  \} ) \\
&= \displaystyle{\prod_{j=2}^{k}} \mu(C_{j})^{-1} \mu \otimes \dots \otimes \mu  (\{(y_{1}, \dots,y_{k})\in C_{1} \times \dots \times C_{k}: \det(y_{1}, \dots, y_{k+1}) < (\frac{A}{ \|f_{k+1}\|_{\infty}- \varepsilon })^{\frac{1}{\gamma}}
\displaystyle{\prod_{j=1}^{k}}  \alpha_{j}^{-\frac{1}{\gamma}} \} ).
\end{align*}
Denote $ \frac{A}{ \|f_{k+1}\|_{\infty}- \varepsilon }^{\frac{1}{\gamma}} \displaystyle{\prod_{j=1}^{k}}  \alpha_{j}^{-\frac{1}{\gamma}}  $
by  $M$, then it follows from Lemma 3 that
\begin{align*}
& \ \ \  \mu (\{y_{1}: f_{1}(y_{1})> \alpha_{1}\} ) \\
&\leq \displaystyle{\prod_{j=2}^{k}} \mu(C_{j})^{-1} \mu \otimes \dots \otimes \mu  (\{(y_{1}, \dots,y_{k})\in C_{1} \times \dots \times C_{k}:
\det(y_{1}, \dots, y_{k+1}) < M \} ) \\
&=\displaystyle{\prod_{j=2}^{k}} \mu(C_{j})^{-1}  \mu \otimes \dots \otimes \mu  (\{(y_{1}, \dots,y_{k})\in C_{1} \times \dots \times C_{k}: \det(0,y_{1}-y_{k+1}, \dots, y_{k}-y_{k+1}) <M \}) \\
&\leq C_{k, \alpha} \displaystyle{\prod_{j=2}^{k}} \mu(C_{j})^{-1} M^{\alpha}
\displaystyle{\prod_{j=1}^{k}} \mu(C_{j})^{1- \frac{1}{k}} \\
&= C_{k, \alpha} M^{\alpha}
\mu(C_{1})^{1-\frac{1}{k}} \mu(C_{2})^{-\frac{1}{k}} \dots \mu(C_{k})^{-\frac{1}{k}}.
\end{align*}
Hence
$$\mu (\{y_{1}: f_{1}(y_{1}) > \alpha_{1}\} ) = \mu(C_{1})
\leq C_{k, \alpha} (\frac{A}{ \|f_{k+1}\|_{\infty}- \varepsilon })^{\frac{\alpha}{\gamma}} \prod_{j=1}^{k} \alpha_{j}^{-\frac{\alpha}{\gamma}}
\mu(C_{1})^{1-\frac{1}{k}} \mu(C_{2})^{-\frac{1}{k}} \dots \mu(C_{k})^{-\frac{1}{k}}.$$
That is, we obtain for any $\alpha_{j} > 0$
$$\mu(C_{1})^{\frac{\gamma}{\alpha}}
\leq C_{k, \alpha}^{\frac{\gamma}{\alpha}} \frac{A}{ \|f_{k+1}\|_{\infty}- \varepsilon }
\displaystyle{\prod_{j=1}^{k}} \alpha_{j}^{-1} \mu(C_{1})^{(1-\frac{1}{k})\frac{\gamma}{\alpha}} \mu(C_{2})^{-\frac{\gamma}{k \alpha}} \dots \mu(C_{k})^{-\frac{\gamma}{k \alpha}}.$$
Simplify it to give that
$$\alpha_{1} \mu(C_{1})^{\frac{\gamma}{k \alpha}}
\leq C_{k, \alpha}^{\frac{\gamma}{\alpha}} \frac{A}{ \|f_{k+1}\|_{\infty}- \varepsilon }
\frac{1}{ \displaystyle{\prod_{j=2}^{k}} \alpha_{j} \mu(C_{j})^{\frac{\gamma}{k \alpha}} } . $$
Let $\varepsilon \rightarrow 0$, we get for any $\alpha_{j} > 0$
$$\displaystyle{\prod_{j=1}^{k}} \alpha_{j} \mu(C_{j})^{\frac{\gamma}{k \alpha}}
\leq C_{k, \alpha}^{\frac{\gamma}{\alpha}}
\frac{A}{ \|f_{k+1}\|_{\infty} } .   \eqno(3.12) $$
Since $\alpha_{j}$  are arbitrary, this  allows us to take
the infimum over all $\alpha_{j} > 0$ on (3.12), $j=1, \dots, k+1$,  which gives
$$\displaystyle{\prod_{j=1}^{k}} \|f_{j}\|_{L^{\frac{k \alpha}{\gamma}, \infty}(d \mu)} \|f_{k+1}\|_{L^{\infty}(d \mu)} \leq C_{k, \alpha}^{\frac{\gamma}{\alpha}} A
=C_{k, \alpha}^{\frac{\gamma}{\alpha}} \displaystyle{\sup_{y_{j}}} \ \displaystyle{\prod_{j=1}^{k+1}} \ f_{j}(y_{j}) \det(y_{1}, \dots,y_{k+1})^{\gamma}.$$
This proves the endpoint case (3.7).  Meanwhile by symmetry (3.8) holds.

\end{proof}

\medskip

\hspace{-13pt}{\it Proof of Theorem 3.1 (a)}\quad

For any general $f_{j} \in L^{p_{j}}(d \mu)$, there exist sequences of simple functions $\{f_{jn}\} \uparrow f_{j}$ as $n \rightarrow \infty$. We apply Lemma 3.2
for simple functions  $f_{jn}$ , this is because simple functions are in $L^{\frac{k \alpha}{\gamma}, \infty}(d \mu) \cap L^{\infty}(d \mu)$.
For each $1 \leq j \leq k+1$, for every $n$, we have
$$ \prod_{i \neq j}  \|f_{in}\|_{L^{\frac{k \alpha}{\gamma}, \infty}(d \mu)}  \|f_{jn}\|_{L^{\infty}(d \mu)}  \lesssim
\displaystyle{\sup_{y_{j}}} \ \displaystyle{\prod_{j=1}^{k+1}} \ f_{jn}(y_{j}) \det(y_{1}, \dots,y_{k+1})^{\gamma}. \eqno(3.13)$$
Based on this, by the layer cake representation it is easy to obtain that for $\frac{1}{p_{j}} < \frac{\gamma}{k \alpha}$, $f_{jn} \in L^{p_{j}}(d \mu)$ and
$$\|f_{jn}\|_{L^{p_{j}}(d \mu)} \lesssim \|f_{jn}\|_{L^{\frac{k \alpha}{\gamma}, \infty}(d \mu)}^{\frac{k \alpha}{\gamma p_{j}}}
\|f_{jn}\|_{L^{\infty}(d \mu)}^{1-\frac{k \alpha}{\gamma p_{j}}}.   \eqno(3.14)$$
We assume that for every $n$
$$\displaystyle{\sup_{y_{j}}} \ \displaystyle{\prod_{j=1}^{k+1}} \ f_{jn}(y_{j}) \det(y_{1}, \dots,y_{k+1})^{\gamma}=A_{n}<\infty,$$
then from (3.13), (3.14) and  $\displaystyle{ \sum_{j=1}^{k+1} } \ \frac{1}{p_{j}}= \frac{\gamma}{\alpha}$ it follows that
\begin{align*}
\displaystyle{\prod_{j=1}^{k+1}} \|f_{jn}\|_{L^{p_{j}}(d \mu)}
&\lesssim \prod_{j=1}^{k+1} \|f_{jn}\|_{L^{\frac{k \alpha}{\gamma}, \infty}(d \mu)}^{\frac{k \alpha}{\gamma p_{j}}}  \|f_{jn}\|_{L^{\infty}(d \mu)}^{1-\frac{k \alpha}{\gamma p_{j}}} \\
&=\prod_{j=1}^{k+1} ( \prod_{i \neq j}  \|f_{in}\|_{L^{\frac{k \alpha}{\gamma}, \infty}(d \mu)}  \|f_{jn}\|_{L^{\infty}(d \mu)} )^{1- \frac{k \alpha}{\gamma p_{j} }} \\
&\lesssim \prod_{j=1}^{k+1} (\displaystyle{\sup_{y_{j}}} \ \displaystyle{\prod_{j=1}^{k+1}} \ f_{jn}(y_{j}) \det(y_{1}, \dots,y_{k+1})^{\gamma})^{1- \frac{k \alpha}{\gamma p_{j}}} \\
&= \prod_{j=1}^{k+1} A_{n}^{1- \frac{k \alpha}{\gamma p_{j} }}.
\end{align*}
Note that  $\sum\limits_{j=1}^{k+1} (1- \frac{k \alpha}{\gamma p_{j}})=1$, since $\displaystyle{ \sum_{j=1}^{k+1} } \ \frac{1}{p_{j}}= \frac{\gamma}{\alpha}$.
Hence,
$$\displaystyle{\prod_{j=1}^{k+1}} \|f_{jn}\|_{L^{p_{j}}(d \mu)} \lesssim   \prod_{j=1}^{k+1} A_{n}^{1- \frac{k \alpha}{\gamma p_{j} }}
= A_{n} \equiv \displaystyle{\sup_{y_{j}}} \ \displaystyle{\prod_{j=1}^{k+1}} \ f_{jn}(y_{j}) \det(y_{1}, \dots,y_{k+1})^{\gamma}.$$
Therefore, for every $n$
$$\displaystyle{\prod_{j=1}^{k+1}} \|f_{jn}\|_{L^{p_{j}}(d \mu)}
\leq \displaystyle{\sup_{y_{j}}} \ \displaystyle{\prod_{j=1}^{k+1}} \ f_{jn}(y_{j}) \det(y_{1}, \dots,y_{k+1})^{\gamma}
\leq \displaystyle{\sup_{y_{j}}} \ \displaystyle{\prod_{j=1}^{k+1}} \ f_{j}(y_{j}) \det(y_{1}, \dots,y_{k+1})^{\gamma}.$$
Let $n \rightarrow \infty$ to deduce that
$$\displaystyle{\prod_{j=1}^{k+1}} \|f_{j}\|_{L^{p_{j}}(d \mu)}  \lesssim \displaystyle{\sup_{y_{j}}} \ \displaystyle{\prod_{j=1}^{k+1}} \ f_{j}(y_{j}) \det(y_{1}, \dots,y_{k+1})^{\gamma}.$$
This completes the proof of this theorem.

$\hfill\Box$

\bigskip

We shall now present an alternative  method  to show that condition (i) implies condition (ii),  mainly applying  Gressman's  result Lemma 5 above. \\

\hspace{-13pt}{\it Alternative proof of Theorem 3.1 (a)}\quad

Suppose $\displaystyle{\sup_{y_{j}}} \ \displaystyle{\prod_{j=1}^{k+1}} \ f_{j}(y_{j})  \det(y_{1}, \dots , y_{n+1})^{\gamma} = A < \infty$.
We can write
\begin{align*}
& \ \ \ \|f_{1}\|_{p_{1}}^{p_{1}} \cdots\|f_{k+1}\|_{p_{k+1}}^{p_{k+1}} \\
&= \int_{H} \cdots \int_{H} \displaystyle{\prod_{j=1}^{k+1}} \ f_{j}(y_{j})^{p_{j}} d\mu(y_{1}) \dots d\mu(y_{k+1}) \\
&= \int_{H} \cdots \int_{H} \displaystyle{\prod_{j=1}^{k+1}}  \ f_{j}(y_{j}) \det(y_{1}, \dots,y_{k+1})^{\gamma}
\displaystyle{\prod_{j=1}^{k+1}}  \ f_{j}(y_{j})^{p_{j}-1} \det(y_{1}, \dots,y_{k+1})^{-\gamma}
d\mu(y_{1}) \dots d\mu(y_{k+1})
\end{align*}
Since
$$ \displaystyle{\sup_{y_{j}}} \ \displaystyle{\prod_{j=1}^{k+1}} \  f_{j}(y_{j}) \det(y_{1}, \dots,y_{k+1})^{\gamma} \leq A,$$
and $p_{j}-1=\frac{p_{j}}{p_{j}^{\prime}}$, so
$$\|f_{1}\|_{p_{1}}^{p_{1}} \cdots \|f_{k+1}\|_{p_{k+1}}^{p_{k+1}}
\leq A \int_{H} \cdots \int_{H}  \displaystyle{\prod_{j=1}^{k+1}} \  f_{j}(y_{j})^{\frac{p_{j}}{p_{j}^{\prime}}}
\det(y_{1}, \dots,y_{k+1})^{-\gamma} d\mu(y_{1}) \dots d\mu(y_{k+1}). $$
From Lemma 5 it follows that for all $1 \leq p_{j}^{\prime} \leq \infty$ satisfying $\frac{1}{p_{j}^{\prime}} > 1- \frac{\gamma}{k \alpha}$, $j=1, \dots, k+1$,
$$\int_{H}\cdots \int_{H}  \displaystyle{\prod_{j=1}^{k+1}} \ f_{j}(y_{j})^{\frac{p_{j}}{p_{j}^{\prime}}} \det(y_{1}, \dots,y_{k+1})^{-\gamma} d\mu(y_{1}) \dots d\mu(y_{k+1})
\leq C \displaystyle{\prod_{j=1}^{k+1}} \|f_{j}^{\frac{p_{j}}{p_{j}^{\prime}}}\|_{p_{j}^{\prime}}$$
holds, where $k+1 - \displaystyle{ \sum_{j=1}^{k+1} } \ \frac{1}{p_{j}^{\prime}}= \displaystyle{ \sum_{j=1}^{k+1} } \ \frac{1}{p_{j}}= \frac{\gamma}{\alpha}$. \\
Therefore,
$$\|f_{1}\|_{p_{1}}^{p_{1}} \cdots \|f_{k+1}\|_{p_{k+1}}^{p_{k+1}}
\leq C A \ \displaystyle{\prod_{j=1}^{k+1}} \|f_{j}^{\frac{p_{j}}{p_{j}^{\prime}}}\|_{p_{j}^{\prime}}
= C A \ \displaystyle{\prod_{j=1}^{k+1}} \|f_{j}\|_{p_{j}}^{p_{j}-1},$$
which indicates for all $1 \leq p_{j} < \infty$ satisfying $\frac{1}{p_{j}} < \frac{\gamma}{k \alpha}$ and $\displaystyle{ \sum_{j=1}^{k+1} } \ \frac{1}{p_{j}}= \frac{\gamma}{\alpha}$,
we have
$$\displaystyle{\prod_{j=1}^{k+1}} \|f_{j}\|_{p_{j}} \leq C \
\displaystyle{\sup_{y_{j}}} \ \displaystyle{\prod_{j=1}^{k+1}} \ f_{j}(y_{j}) \det(y_{1}, \dots,y_{k+1})^{\gamma}.  \eqno(3.15) $$

As for other $0 < p_{j} < 1$, $1 \leq j \leq k+1$, such that
$\frac{1}{p_{j}} < \frac{\gamma}{k \alpha}$ and $\displaystyle{ \sum_{j=1}^{k+1} } \ \frac{1}{p_{j}}= \frac{\gamma}{\alpha}$, it is easy to see
$\displaystyle{\prod_{j=1}^{k+1}} \|f_{j}\|_{p_{j}}
= \displaystyle{\prod_{j=1}^{k+1}} \|f_{j}^{p_{1} \dots p_{k+1}}\|_{\frac{1}{q_{j}}}^{ \frac{1}{p_{1} \dots p_{k+1}}}$,
with $q_{j}= p_{1} \dots p_{j-1} p_{j+1} \dots p_{k+1}$, and
$\displaystyle{ \sum_{j=1}^{k+1} } q_{j}= \frac{\gamma}{\alpha}(p_{1} \dots p_{k+1})$.
Since $\frac{1}{q_{j}} >1$, $q_{j} < \frac{\gamma}{k \alpha}$, we can apply (3.15) to give
$$\displaystyle{\prod_{j=1}^{k+1}} \|f_{j}^{p_{1} \dots p_{k+1}}\|_{\frac{1}{q_{j}}}
\leq C \displaystyle{\sup_{y_{j}}} \ \displaystyle{\prod_{j=1}^{k+1}} \ f_{j}(y_{j})^{p_{1} \dots p_{k+1}} \det(y_{1}, \dots,y_{k+1})^{\gamma (p_{1} \dots p_{k+1})}.$$
Thus
\begin{align*}
 \displaystyle{\prod_{j=1}^{k+1}} \|f_{j}\|_{p_{j}}
&= \displaystyle{\prod_{j=1}^{k+1}} \|f_{j}^{p_{1} \dots p_{k+1}}\|_{\frac{1}{q_{j}}}^
{\frac{1}{p_{1} \dots p_{k+1}}} \\
&\leq C \ \displaystyle{\sup_{y_{j}}} \ \displaystyle{\prod_{j=1}^{k+1}} \ f_{j}(y_{j}) \det(y_{1}, \dots, y_{k+1})^{\gamma}.
\end{align*}
In conclusion, we obtain that for all $0 < p_{j} < \infty$ satisfying
$\frac{1}{p_{j}} < \frac{\gamma}{k \alpha}$ and
$\displaystyle{ \sum_{j=1}^{k+1} } \ \frac{1}{p_{j}}= \frac{\gamma}{\alpha}$,  $1 \leq j \leq k+1$,
$$ \displaystyle{\prod_{j=1}^{k+1}} \|f_{j}\|_{p_{j}} \leq C \ \displaystyle{\sup_{y_{j}}} \ \displaystyle{\prod_{j=1}^{k+1}} \ f_{j}(y_{j}) \det(y_{1}, \dots,y_{k+1})^{\gamma}. $$
This also completes the proof of part  (a) of Theorem 3.1.

$\hfill\Box$

\medskip

It should be pointed out that we find condition (i) and (ii) are equivalent to the  inequality (3.3) in Lemma 5 as well from the the alternative method of proof (a).
Lemma 5 states condition (i) implies inequality (3.3), and we use inequality (3.3) to get the inequality (3.5) in the alternative method of proof (a).
Besides, Theorem 3.1 shows that condition  (i) and (ii)  i.e.  inequality (3.5) are equivalent.

\bigskip

If we strengthen  the condition (i)  to  $\mu(B) \sim  |B|_{k}^{\alpha}$ for all ellipsoids $B$ in $H$,
then  $ \frac{1}{p_{j}} < \frac{\gamma}{k \alpha}$  for all $1 \leq j \leq k+1$ and
$\frac{\gamma}{\alpha} = \displaystyle{ \sum_{j=1}^{k+1} } \ \frac{1}{p_{j}} $ are  necessary and sufficient
conditions for inequality (3.5) to hold, which can be seen in the following theorem.

\medskip

\begin{theorem}
Let $f_{j}$ be nonnegative measurable functions defined on real  finite-dimensional Hilbert space $H$. Let
$\mu$ be a $\sigma$-finite, nonnegative Borel measure with satisfying
$\mu(B) \sim  |B|_{k}^{\alpha}$ for all ellipsoids $B$ in $H$. Then for all $0< p_{j}<\infty$
$$ \displaystyle{\prod_{j=1}^{k+1}} \|f_{j}\|_{L^{p_{j}}(d \mu)} \leq
C_{k, \alpha, p_{j}} \displaystyle{\sup_{y_{j}}} \ \displaystyle{\prod_{j=1}^{k+1}} \ f_{j}(y_{j}) \ (\det(y_{1}, \dots,y_{k+1}))^{\gamma}  \eqno(3.16)$$
holds,
if  and only if $p_{j}$ satisfy
\begin{center}
 $ \frac{1}{p_{j}} < \frac{\gamma}{k \alpha}$  \  for all $1 \leq j \leq k+1$ \  and  \ $\frac{\gamma}{\alpha} = \displaystyle{ \sum_{j=1}^{k+1} } \ \frac{1}{p_{j}} $ .
\end{center}
\end{theorem}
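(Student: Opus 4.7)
The plan is to prove the two implications separately. The \emph{if} direction is immediate from Theorem 3.1(a): the hypothesis $\mu(B)\sim|B|_k^\alpha$ subsumes the one-sided upper bound $\mu(B)\leq C|B|_k^\alpha$, which is exactly condition (i) of Theorem 3.1. Hence (3.16) follows directly under the stated conditions on the $p_j$.

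For the \emph{only if} direction, one must extract both the homogeneity relation $\sum_{j=1}^{k+1}1/p_j=\gamma/\alpha$ and the strict inequalities $1/p_j<\gamma/(k\alpha)$ from the validity of (3.16). The first is obtained by combining Theorem 3.1(b), which turns (3.16) into the one-sided estimate $\mu(B)\lesssim|B|_k^{\gamma/\sum 1/p_j}$, with the assumed lower bound $\mu(B)\gtrsim|B|_k^\alpha$. Testing both on the balls $B_r=\{y\in H:|y|\leq r\}$ (which are centred ellipsoids with $|B_r|_k=r^k$), the resulting estimate $r^{k(\alpha-\gamma/\sum 1/p_j)}\lesssim 1$ for all $r>0$ forces the two exponents to coincide.

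For the strict inequalities I would argue by contradiction, handling $j=1$ by symmetry and constructing a counterexample whenever $1/p_1\geq\gamma/(k\alpha)$. Take $f_1(y)=|y|^{-\gamma}\chi_{\{1\leq|y|\leq R\}}(y)$ and $f_j=\chi_{B_1}$ for $j\geq 2$. Hadamard's inequality applied to the Gram determinant gives $\det(y_1,\ldots,y_{k+1})\leq\prod_{i=1}^k|y_i-y_{k+1}|\leq 2^k|y_1|$ whenever $|y_1|\geq 1$ and $y_2,\ldots,y_{k+1}\in B_1$, so $f_1(y_1)\det(y_1,\ldots,y_{k+1})^\gamma\leq 2^{k\gamma}$ and the right-hand side of (3.16) stays bounded as $R\to\infty$. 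On the other hand, the layer-cake identity combined with the lower bound $\mu(B_t)\gtrsim t^{k\alpha}$ yields $\|f_1\|_{p_1}^{p_1}\gtrsim R^{k\alpha-\gamma p_1}$ when $\gamma p_1<k\alpha$ and $\|f_1\|_{p_1}^{p_1}\gtrsim\log R$ at the endpoint $\gamma p_1=k\alpha$, so the left-hand side of (3.16) diverges as $R\to\infty$. This contradiction delivers the strict inequality.

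The main delicate point is precisely the borderline $1/p_1=\gamma/(k\alpha)$, where the divergence is only logarithmic. A crude annular estimate $\mu(B_R\setminus B_{R/2})\geq cR^{k\alpha}-C(R/2)^{k\alpha}$ can fail to be useful when the ratio $C/c$ in the hypothesis $\mu(B)\sim|B|_k^\alpha$ is large, so I would route through the layer-cake identity $\int_{|y|\geq 1}|y|^{-s}\,d\mu=s\int_1^\infty t^{-s-1}\mu(B_t\setminus B_1)\,dt$ and then use that $\mu(B_t)-\mu(B_1)\gtrsim t^{k\alpha}$ for $t$ sufficiently large. This step is exactly what requires the full two-sided hypothesis of the theorem rather than merely the one-sided bound appearing in Theorem 3.1, and it explains why the strengthened assumption is indispensable for the converse.
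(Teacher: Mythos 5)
Your proposal is correct, and its skeleton matches the paper's: sufficiency is quoted from Theorem 3.1(a), and necessity is obtained from a homogeneity argument plus essentially the same counterexample ($f_1=|y|^{-\gamma}$ truncated to an annulus, the remaining $f_j$ indicators of a small ball, with Hadamard's inequality $\det(y_1,\dots,y_{k+1})\leq\prod_{i=1}^{k}|y_i-y_{k+1}|$ controlling the right-hand side). The differences are worth recording. For the homogeneity relation the paper rescales $\chi_B(\cdot/R)$ directly in (3.16) using $\mu(RB)\sim R^{k\alpha}\mu(B)$, while you route through Theorem 3.1(b) to get $\mu(B)\lesssim|B|_k^{\gamma/\sum 1/p_j}$ and play it against the assumed lower bound on balls; both are sound, yours reuses part (b) instead of redoing the scaling. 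For the condition on each $p_j$, the paper argues in two steps: a separate $\epsilon$-shrinking argument (concentrating $f_2,\dots,f_{k+1}$ on $\epsilon B(0,\frac12)$) yields the non-strict bound $1/p_j\leq\gamma/(k\alpha)$, and then the endpoint is excluded by the counterexample; your single family handles $1/p_1\geq\gamma/(k\alpha)$ at once (power divergence $R^{k\alpha-\gamma p_1}$ off the endpoint, logarithmic divergence at it), which is a modest but genuine streamlining that dispenses with the $\epsilon$-scaling step. Finally, your treatment of the logarithmic divergence is actually more careful than the paper's: the paper asserts $\mu(\{|y_1|\sim 2^j\})\sim 2^{jk\alpha}$ for each dyadic annulus, which does not follow from $\mu(B)\sim|B|_k^\alpha$ alone when the implicit constants are far apart (exactly the issue you flag); your layer-cake route using only the cumulative lower bound $\mu(B_t)\gtrsim t^{k\alpha}$ for large $t$ closes this gap cleanly (the paper's argument can also be repaired by an Abel-summation over dyadic scales, but as written it is imprecise). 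In both proofs it is this step that genuinely uses the two-sided hypothesis $\mu(B)\sim|B|_k^{\alpha}$, as you correctly observe.
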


\begin{proof}
$\mu(B) \sim  |B|_{k}^{\alpha}$ for all ellipsoids $B$ in $H$, so the measure $\mu$
is $k$-curved with exponent $\alpha$. Theorem 3.1 (a) gave the sufficient conditions for
inequality (3.16) to hold.
To see the converse,  we study the necessary conditions for inequality (3.16) to hold.
Suppose (3.16) holds for all nonnegative functions $f_{j} \in L^{p_{j}}(d \mu)$, then
$\frac{\gamma}{\alpha}  = \displaystyle{ \sum_{j=1}^{k+1} } \ \frac{1}{p_{j}}$
which follows from  homogeneity. \\
Let $f_{j}= \chi_{B}$ where $B$ is a ball in $H$,  $j=1, \dots,k+1$.
We consider functions $\chi_{B}(\frac{\cdot}{R})$ for all $R >0$: \  for $j=1, \dots,k+1$, we have
$$\|\chi_{B}(\frac{\cdot}{R})\|_{L^{p_{j}}(d \mu)} \sim R^{\frac{k \alpha}{p_{j}}}( \mu(B))^{\frac{1}{p_{j}}},$$
this is because for all $R >0$
$$ \mu(R B) \sim  |R B|_{k}^{\alpha} = R^{k \alpha} |B|_{k}^{\alpha} \sim R^{k \alpha} \mu(B).$$
From the property of $\det(y_{1}, \dots,y_{k+1})$ it follows that
\begin{align*}
&\ \ \ \ \ \ \displaystyle{\sup_{y_{j}}} \ \displaystyle{\prod_{j=1}^{k+1}} \ \chi_{B}(\frac{y_{j}}{R}) \det(y_{1}, \dots,y_{k+1})^{\gamma} \\
&=  R^{k \gamma} \displaystyle{\sup_{y_{j}}}  \ \displaystyle{\prod_{j=1}^{k+1}} \ \chi_{B}(\frac{y_{j}}{R})  \det(\frac{y_{1}}{R}, \dots,\frac{y_{k+1}}{R})^{\gamma} \\
&=  R^{k \gamma} \displaystyle{\sup_{y_{j}}} \ \displaystyle{\prod_{j=1}^{k+1}} \ \chi_{B}(y_{j}) \det(y_{1}, \dots,y_{k+1})^{\gamma}.
\end{align*}
So if (3.16) holds, then
$\displaystyle{ \prod_{j=1}^{k+1} } \ R^{\frac{k \alpha}{p_{j}}} \lesssim  R^{k \gamma}$ for all $R >0$,
which implies
$$\displaystyle{ \sum_{j=1}^{k+1} } \ \frac{k \alpha}{p_{j}} = k \gamma .$$
That is
$$\frac{\gamma}{\alpha} = \displaystyle{ \sum_{j=1}^{k+1} } \ \frac{1}{p_{j}}.    \eqno(3.17)$$

We now claim that if (3.16) holds for all nonnegative functions $f_{j} \in L^{p_{j}}(d \mu)$,
$p_{j}$ must satisfy $\frac{1}{p_{j}} < \frac{\gamma}{k \alpha}$  for all $1 \leq j \leq k+1$.
Let $f_{1} \in L^{p_{1}}(d \mu)$ be supported on $\{y_{1}: |y_{1}| \geq 10  \}$.
For $2 \leq j \leq k+1$, let $f_{j}= \chi_{B(0, \frac{1}{2})}$ where
$B(0, \frac{1}{2})$ denotes the ball in $H$ centred at $0$ with radius $\frac{1}{2}$.
So $|y_{1}-y_{j}| \sim |y_{1}|$ for all $2 \leq j \leq k+1$.
We consider the new functions $f_{1}, f_{j}(\frac{\cdot}{\epsilon})$
with $0< \epsilon < 1$,  $2 \leq j \leq k+1$.

Suppose that inequality $(3.16)$ holds for all nonnegative functions $f_{j} \in L^{p_{j}}(d \mu)$, then
$$\|f_{1}\|_{L^{p_{1}}(d \mu)}
\displaystyle{\prod_{j=2}^{k+1}}  \|f_{j}(\frac{\cdot}{\epsilon})\|_{L^{p_{j}}(d \mu)}
\lesssim  \ \sup\limits_{y_{j}} \ f_{1}(y_{1})    \prod_{j=2}^{k+1}  f_{j}(\frac{y_{j}}{\epsilon})  \det(y_{1}, \dots,y_{k+1})^{\gamma} .$$
By the  Hadamard inequality
$$\det(y_{1}, \dots,y_{k+1}) \leq |y_{1}-y_{k+1}|  |y_{2}-y_{k+1}| \cdots \ |y_{k}-y_{k+1}|,$$
we  have
\begin{align*}
&\ \ \ \  \sup\limits_{y_{j}} \ f_{1}(y_{1})    \prod_{j=2}^{k+1}  f_{j}(\frac{y_{j}}{\epsilon})  \det(y_{1}, \dots,y_{k+1})^{\gamma}  \\
& \leq \ \sup\limits_{y_{j}} \  f_{1}(y_{1})   \prod_{j=2}^{k+1}  f_{j}(\frac{y_{j}}{\epsilon})  \  (|y_{1}-y_{k+1}|  |y_{2}-y_{k+1}| \cdots \ |y_{k}-y_{k+1}|)^{\gamma}  \\
& \sim  \epsilon^{(k-1) \gamma}  \ \sup\limits_{y_{j}} \ f_{1}(y_{1})  \prod_{j=2}^{k+1}  f_{j}(\frac{y_{j}}{\epsilon})  \
(|y_{1}-\frac{y_{k+1}}{\epsilon}| |\frac{y_{2}}{\epsilon}-\frac{y_{k+1}}{\epsilon}|  \cdots \ |\frac{y_{k}}{\epsilon}-\frac{y_{k+1}}{\epsilon}|)^{\gamma} \\
&\sim \epsilon^{(k-1) \gamma}  \ \sup\limits_{y_{j}} \ \displaystyle{\prod_{j=1}^{k+1}} \ f_{j}(y_{j})  \  (|y_{1}-y_{k+1}|  |y_{2}-y_{k+1}| \cdots \ |y_{k}-y_{k+1}|)^{\gamma}.
\end{align*}
On the other hand, for $2 \leq j \leq k+1$
$$\|\chi_{B(0, \frac{1}{2})}(\frac{\cdot}{\epsilon})\|_{L^{p_{j}}(d \mu)} \sim
 \epsilon^{ \frac{k \alpha}{p_{j}}} \mu(B(0, \frac{1}{2}))^{\frac{1}{p_{j}}},$$
this is because for  $ \epsilon >0$
$$ \mu(\epsilon B(0, \frac{1}{2}))
\sim  | \epsilon B(0, \frac{1}{2})|_{k}^{\alpha}
= \epsilon^{k \alpha} |B(0, \frac{1}{2})|_{k}^{\alpha}
\sim \epsilon^{k \alpha} \mu(B(0, \frac{1}{2})).$$
Then
$$\|f_{1}\|_{L^{p_{1}}(d \mu)}
\displaystyle{\prod_{j=2}^{k+1}}  \|f_{j}(\frac{\cdot}{\epsilon})\|_{L^{p_{j}}(d \mu)}
= \prod_{j=2}^{k+1}  \epsilon^{ \frac{k \alpha}{p_{j}} } \
\displaystyle{\prod_{j=1}^{k+1}} \|f_{j}\|_{L^{p_{j}}(d \mu)}.$$
So if (3.16) holds, then for all $0< \epsilon < 1$,
$$\displaystyle{\prod_{j=2}^{k+1}}  \epsilon^{ \frac{k \alpha}{p_{j}} }  \lesssim \epsilon^{(k-1) \gamma},$$
then we have
$$\sum_{j=2}^{k+1} \frac{k \alpha}{p_{j}}  \geq (k-1)  \gamma,$$
which means
$$\frac{1}{p_{1}}= \frac{\gamma}{\alpha}- \sum_{j=2}^{k+1} \frac{1}{p_{j}}
\leq \frac{\gamma}{\alpha}- \frac{k-1}{k \alpha} \gamma
= \frac{\gamma}{k \alpha}.   \eqno(3.18)$$
By symmetry, for any $1 \leq j \leq k+1$
we have $\frac{1}{p_{j}} \leq \frac{\gamma}{k \alpha}$
provided (3.16) holds for all nonnegative functions $f_{j} \in L^{p_{j}}(d \mu)$. \\

As for the boundary case, the following counterexample shows that we must have
$\frac{1}{p_{j}} < \frac{\gamma}{k \alpha}$ for all $1 \leq j \leq k+1$ . \\
For any positive $N$, let $f_{1}(y_{1})= \frac{1}{|y_{1}|^{\gamma}} \chi_{2 \leq |y_{1}| \leq N }$,
$f_{j}(y_{j})= \chi_{|y_{j}| \leq 1/4}, 2 \leq j \leq k+1$.
The  Hadamard inequality tells us
$$\det(y_{1},  \dots ,y_{k+1}) \leq |y_{1}-y_{k+1}|  \cdots |y_{k}-y_{k+1}|,$$
then
\begin{align*}
\displaystyle{\sup_{y_{j}}} \ \displaystyle{\prod_{j=1}^{k+1}} \ f_{j}(y_{j}) \det(y_{1}, \dots,y_{k+1})^{\gamma}
&\leq \displaystyle{\sup_{ y_{j}}  }  \ \displaystyle{\prod_{j=1}^{k+1}} \ f_{j}(y_{j}) \  |y_{1}-y_{k+1}|^{\gamma} \cdots |y_{k}-y_{k+1}|^{\gamma}  \\
&\lesssim \displaystyle{\sup_{2 \leq |y_{1}| \leq N}} |y_{1}|^{-\gamma} (|y_{1}|+\frac{1}{4})^{\gamma}
\lesssim 1.
\end{align*}
On the other hand, by polar coordinates we obtain
\begin{align*}
\displaystyle{ \limsup_{N \rightarrow \infty }} \ \|f_{1}\|_{L^{\frac{k \alpha}{\gamma}(d \mu)}}
&= \limsup_{N \rightarrow \infty  } \int_{2 \leq |y_{1}| \leq N} |y_{1}|^{-k \alpha} d \mu(y_{1}) \\
&= \sum\limits_{j>0} \int_{|y_{1}| \sim 2^{j}} |y_{1}|^{-k \alpha} d \mu(y_{1}) \\
&\gtrsim \sum\limits_{j>0} 2^{-k \alpha j} 2^{j k \alpha } =\infty,
\end{align*}
which gives the contradiction to (3.16). \\
The last inequality follows due to  the fact that
$\mu(B) \sim  |B|_{k}^{\alpha}$ for all ellipsoids $B$ in $H$, which implies
$$\mu(\{y_{1} \in H: |y_{1}| \sim 2^{j} \}) \sim 2^{j k \alpha}.$$

\end{proof}

\bigskip

As is well known, the Lebesgue measure on $\mathbb{R}^{n}$ is not only $n$-curved with exponent $1$,
but also it satisfies $|B| \sim |B|_{n}$ for all ellipsoids $B$ in $H$.
Hence we obtain the following corollary immediately.

\begin{corollary}
Let $ f_{j} \in  L^{p_{j}}(\mathbb{R}^{n})  $ with Lebesgue measure, then
$$  \displaystyle{\prod_{j=1}^{n+1}} \|f_{j}\|_{p_{j}}   \leq
C_{n, p_{j}} \displaystyle{\sup_{y_{j}}} \ \displaystyle{\prod_{j=1}^{n+1}} \ f_{j}(y_{j}) \det(y_{1}, \dots, y_{n+1})^{\gamma}   \eqno(3.19)$$
holds, if  and only if $p_{j}$ satisfy
\begin{center}
 $ \frac{1}{p_{j}} < \frac{\gamma}{n}$  \ for all $1 \leq j \leq n+1$ and $\gamma = \displaystyle{ \sum_{j=1}^{n+1} } \ \frac{1}{p_{j}} $.
\end{center}

\end{corollary}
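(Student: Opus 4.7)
The plan is to derive this corollary as a direct specialization of Theorem 3.3 with $H = \mathbb{R}^n$, $\mu$ the Lebesgue measure, and $k = n$. The essential geometric input is the comparison $\mu(B) \sim |B|_n^{\alpha}$ for \emph{all} ellipsoids $B \subset \mathbb{R}^n$, with the correct choice of $\alpha$.

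First, I would verify this comparison with $\alpha = 1$. Any ellipsoid $B \subset \mathbb{R}^n$ with centre $x_0$, orthonormal axes $\{\omega_i\}$ and semi-axis lengths $l_1, \ldots, l_n \in [0,\infty]$ has Lebesgue volume proportional to $\prod_{i=1}^{n} l_i$ (with the usual convention that $\prod l_i = \infty$ if any $l_i = \infty$ and some $l_j > 0$, and $=0$ if any $l_i = 0$ while the others are finite). On the other hand, by Definition 1 we have $|B|_n = \sup\{l_{i_1}\cdots l_{i_n} : i_1 < \cdots < i_n\} = \prod_{i=1}^n l_i$, since there is only one such $n$-tuple. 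Therefore $|B| \sim |B|_n$ for every ellipsoid in $\mathbb{R}^n$, i.e.\ the Lebesgue measure is $n$-curved with exponent $\alpha = 1$ and the two-sided bound of Theorem 3.3 holds.

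Next, I would plug $k = n$ and $\alpha = 1$ into the sufficient-and-necessary conditions from Theorem 3.3. The condition $\frac{1}{p_j} < \frac{\gamma}{k\alpha}$ becomes $\frac{1}{p_j} < \frac{\gamma}{n}$ for each $1 \le j \le n+1$, and the scaling condition $\frac{\gamma}{\alpha} = \sum_{j=1}^{k+1}\frac{1}{p_j}$ becomes $\gamma = \sum_{j=1}^{n+1}\frac{1}{p_j}$. These are exactly the conditions stated in the corollary, so Theorem 3.3 yields inequality (3.19) and also proves its necessity.

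There is no real obstacle here: the corollary is a transparent specialisation, and the only content is the verification that Lebesgue measure falls within the scope of Theorem 3.3. One small point worth mentioning is that the constant $C_{n,p_j}$ in (3.19) absorbs both the implicit constant from $|B| \sim |B|_n$ and the constant $C_{k,\alpha,p_j}$ produced by Theorem 3.3; no sharpness is claimed at this stage, the sharp forms being deferred to Section 4.
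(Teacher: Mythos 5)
Your proposal is correct and is exactly the paper's argument: the corollary is obtained by specialising Theorem 3.3 to Lebesgue measure on $\mathbb{R}^{n}$ with $k=n$ and $\alpha=1$, using the fact that $|B|\sim|B|_{n}$ for all ellipsoids $B$. Your explicit verification that $|B|_{n}=\prod_{i}l_{i}$ matches the Lebesgue volume is a slightly fuller write-up of what the paper states in one line, but the route is the same.
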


\bigskip

We now consider the second class of multilinear inequalities where we have a product form  rather than a determinant.

\medskip

\begin{theorem}
Let $ r_{ij} >0 $  and  $r_{i j} = r_{j i}$.
Let $f_{j}$  be nonnegative measurable functions defined on $\mathbb{R}^{n}$, then
$$\prod_{j=1}^{3} \|f_{j}\|_{p_{j}} \leq C_{p_{j},  r_{ij}, n }  \ \displaystyle{\sup_{y_{j}}}  \prod_{j=1}^{3}  f_{j}(y_{j})  \prod_{1 \leq i < j \leq 3} |y_{i}-y_{j}|^{r_{ij}} \eqno(3.20)$$
holds,  if and only if   $p_{j}$ satisfy
\begin{center}
 $\displaystyle{ \sum_{j=1}^{3}   }  \frac{1}{p_{j}}= \frac{1}{n} ( r_{12} +r_{13}+ r_{23})$ ,
$\frac{1}{p_{j}} < \frac{1}{n}    \displaystyle{\sum_ {i \neq j}}   r_{ij} $  \  for every $j$.
\end{center}
\end{theorem}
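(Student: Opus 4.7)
Write $s_j := \sum_{i\neq j} r_{ij}$ and $r^* := r_{12}+r_{13}+r_{23}$, so that $s_1+s_2+s_3 = 2r^*$.

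\emph{Necessity.} The identity $\sum_j 1/p_j = r^*/n$ is forced by homogeneity, as in the necessity argument for Theorem 3.3: compare both sides of (3.20) under the dilation $f_j \mapsto f_j(\cdot/R)$ and let $R\to\infty$. For the strict bound $1/p_j < s_j/n$ (say $j=1$; the others follow by symmetry), I would fix $f_1$ supported in $\{|y_1|\geq 10\}$ and replace $f_2,f_3$ by $f_2(\cdot/\eps), f_3(\cdot/\eps)$ compactly supported near $0$. Since $|y_1-y_j|\sim|y_1|$ for $j=2,3$ and $|y_2-y_3|\lesssim\eps$ when $\eps$ is small, the right-hand side of (3.20) is $O(\eps^{r_{23}})$, while the left-hand side scales as $\eps^{n(1/p_2+1/p_3)}$; letting $\eps\to 0$ forces $1/p_2+1/p_3\geq r_{23}/n$, i.e.\ $1/p_1\leq s_1/n$. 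The boundary case is then excluded by the logarithmic counterexample $f_1(y_1) = |y_1|^{-s_1}\chi_{\{1\leq |y_1|\leq N\}}$ with $f_2, f_3$ indicators of a small ball at the origin: the right-hand side stays bounded uniformly in $N$ by the Hadamard-type estimate $|y_1-y_2|^{r_{12}}|y_1-y_3|^{r_{13}}\lesssim |y_1|^{s_1}$, while $\|f_1\|_{p_1}\to\infty$ as $N\to\infty$. This mirrors the last paragraph of the proof of Theorem 3.3.

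\emph{Sufficiency.} Set $A := \sup_{y_j}\prod_j f_j(y_j)\prod_{i<j}|y_i-y_j|^{r_{ij}}$ and assume $A<\infty$. I plan to follow the template used for Theorem 3.1(a): first establish three weak-type endpoint estimates (one for each choice of which $f_j$ sits in $L^\infty$), and then interpolate via layer-cake. For the endpoint with $f_3\in L^\infty$, pick $y_3^*$ where $f_3$ essentially attains $\|f_3\|_\infty$; the pointwise bound yields, for a.e.\ $(y_1,y_2)$,
\[
\bigl(f_1(y_1)|y_1-y_3^*|^{r_{13}}\bigr)\bigl(f_2(y_2)|y_2-y_3^*|^{r_{23}}\bigr)|y_1-y_2|^{r_{12}} \;\lesssim\; A/\|f_3\|_\infty,
\]
to which Corollary 2.3 applies with weighted functions $g_j(y_j) := f_j(y_j)|y_j-y_3^*|^{r_{j3}}$ and exponents $(q_1,q_2)$ satisfying $n(1/q_1+1/q_2)=r_{12}$, giving $\|g_1\|_{q_1}\|g_2\|_{q_2}\|f_3\|_\infty\lesssim A$. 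A level-set / distribution-function argument, in the spirit of Lemma 3.2 but applied pairwise together with a dyadic decomposition of the level sets of $f_1,f_2$ with respect to their distance from $y_3^*$, should then convert this weighted bound into an \emph{unweighted} endpoint of the form
\[
\|f_1\|_{L^{\tilde q_1,\infty}}\|f_2\|_{L^{\tilde q_2,\infty}}\|f_3\|_{L^\infty}\;\lesssim\; A,
\]
with $(\tilde q_1,\tilde q_2)$ determined by $(r_{12},r_{13},r_{23},n)$. The two symmetric endpoints (with $f_1$ or $f_2$ in $L^\infty$) follow by permuting indices.

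\emph{Main obstacle.} The hard step is precisely this unweighted distributional endpoint: the weight $|y_j-y_3^*|^{r_{j3}}$ is not locally integrable at $y_3^*$, so the dyadic level-set analysis needs care to extract the correct exponents $\tilde q_j$ without picking up extraneous factors depending on $y_3^*$. Once the three endpoints are in hand with the right exponents, the passage to the strong-type bound (3.20) on the open region $\{0<1/p_j<s_j/n,\ \sum 1/p_j = r^*/n\}$ is a Marcinkiewicz/layer-cake interpolation identical in structure to the argument deriving Theorem 3.1(a) from Lemma 3.2: the strict inequalities $1/p_j<s_j/n$ enter precisely to ensure convergence of the layer-cake integrals $\int_0^{\|f_j\|_\infty}\beta^{p_j-\tilde q_j-1}\,d\beta$ at $\beta=0$, matching the role played by the analogous constraints in Theorem 3.1(a).
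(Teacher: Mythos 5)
Your necessity argument is the same as the paper's (scaling to force the homogeneity relation, the shrinking-ball example to force $1/p_j\le s_j/n$, and the logarithmic example $f_1=|y_1|^{-(r_{12}+r_{13})}\chi_{\{2\le |y_1|\le N\}}$ with small balls for $f_2,f_3$ to exclude equality), so that half is fine. The genuine gap is in the sufficiency half: everything hinges on the unweighted endpoint $\|f_1\|_{L^{\tilde q_1,\infty}}\|f_2\|_{L^{\tilde q_2,\infty}}\|f_3\|_{\infty}\lesssim A$, and you do not prove it; you only assert that a dyadic level-set analysis ``should'' convert the weighted bound $\|f_1|\cdot-y_3^*|^{r_{13}}\|_{q_1}\|f_2|\cdot-y_3^*|^{r_{23}}\|_{q_2}\|f_3\|_{\infty}\lesssim A$ into it, and you misdiagnose the difficulty: since $r_{j3}>0$ the weight $|y_j-y_3^*|^{r_{j3}}$ is continuous and locally integrable (it vanishes at $y_3^*$); the real question is only whether the unweighted weak norm is dominated by the weighted strong norm. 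That step can in fact be completed: by Chebyshev, with $E_\lambda=\{f_1>\lambda\}$ one has $\|f_1|\cdot-y_3^*|^{r_{13}}\|_{q_1}^{q_1}\ge \lambda^{q_1}\int_{E_\lambda}|y-y_3^*|^{r_{13}q_1}dy\ge c_n\,\lambda^{q_1}|E_\lambda|^{1+r_{13}q_1/n}$, because among sets of prescribed measure the ball centred at $y_3^*$ minimises the integral of an increasing radial weight; hence $\|f_1\|_{L^{\tilde q_1,\infty}}\lesssim\|f_1|\cdot-y_3^*|^{r_{13}}\|_{q_1}$ with $n/\tilde q_1=n/q_1+r_{13}$, uniformly in $y_3^*$, and similarly for $f_2$. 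With this supplied (plus the measure-theoretic care in choosing $y_3^*$ off the exceptional null set, as in Lemma 3.2), your endpoints with $n/\tilde q_1\in(r_{13},r_{13}+r_{12})$, $n/\tilde q_1+n/\tilde q_2=r_{12}+r_{13}+r_{23}$, and their permutations do surround the corners, and the layer-cake interpolation closes as you indicate. But as written the proposal stops short of the decisive estimate, so it is not yet a proof.

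For comparison, the paper reaches its endpoints by a different and more direct device: having fixed $y_3$, it integrates the pointwise bound in $y_2$ against the kernel $|y_1-y_2|^{-r_{12}}|y_2-y_3|^{-r_{23}}$ raised to the slightly subcritical power $n/(r_{12}+r_{23}-\theta)$, which is finite and, by scaling, equals a constant times $|y_1-y_3|^{-\theta}$; this gives the strong-type bound $\|f_2\|_{n/(r_{12}+r_{23}-\theta)}\|f_3\|_{\infty}\lesssim A\,\bigl(\sup_{y_1}f_1(y_1)|y_1-y_3|^{r_{13}+\theta}\bigr)^{-1}$, and the bilinear weak-type endpoint from Lemma 2.2 converts the supremum in the denominator into $\|f_1\|_{n/(r_{13}+\theta),\infty}$. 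The $\theta$ lost on one index is gained on the other, yielding six near-corner endpoints whose convex hull contains the target exponents precisely because of the strict inequalities $1/p_j<\frac{1}{n}\sum_{i\neq j}r_{ij}$, and the interpolation then runs as in Theorem 3.1(a). So the two routes differ in how the one-variable integration is carried out (explicit kernel integration with an $\theta$-of-room, versus your weighted application of Corollary 2.3 followed by a Chebyshev/rearrangement conversion), while the overall architecture --- three families of endpoints with one function at a time in $L^{\infty}$, then layer-cake interpolation --- is the same.
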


\begin{proof}
$\displaystyle{ \sum_{j=1}^{3}   }  \frac{1}{p_{j}}= \frac{1}{n} ( r_{12} +r_{13}+ r_{23})$
just follows from homogeneity. Besides,
by applying the similar example in the proof of Theorem 3.3  we can get the necessary conditions for (3.20) to hold: for every $ j$
$$\frac{1}{p_{j}} \leq \frac{1}{n}    \displaystyle{\sum_ {i \neq j}}   r_{ij}. $$
The following counterexample shows that we must have $\frac{1}{p_{j}} < \frac{1}{n}    \displaystyle{\sum_ {i \neq j}}   r_{ij} $ for each $j$.
If we assume $\frac{1}{p_{1}}= \frac{r_{12}+r_{13}}{n} $,
for any positive $N$, let
\begin{center}
$f_{1}(y_{1})= |y_{1}|^{-(r_{12}+r_{13})} \chi_{2 \leq |y_{1}| \leq N }$, \
$f_{2}(y_{2})= \chi_{|y_{2}| \leq 1/4}$,  \  $f_{3}(y_{3})= \chi_{|y_{3}| \leq 1/4}$.
\end{center}
Suppose
$$A=  \displaystyle{\sup_{y_{j}}} \  f_{1}(y_{1})f_{2}(y_{2})f_{3}(y_{3}) |y_{1}-y_{2}|^{r_{12}} |y_{1}-y_{3}|^{r_{13}} |y_{2}-y_{3}|^{r_{23}},$$
then
\begin{align*}
A &\lesssim \displaystyle{\sup_{2 \leq |y_{1}| \leq N}} |y_{1}|^{-(r_{12}+r_{13})} (|y_{1}|+\frac{1}{4})^{r_{12}} (|y_{1}|+\frac{1}{4})^{r_{13}} \\
&\lesssim 1.
\end{align*}
However, by polar coordinates we obtain
\begin{align*}
\|f_{1}\|_{\frac{n}{r_{12}+r_{13}}}
&=\int_{2 \leq |y_{1}| \leq N} \frac{1}{|y_{1}|^{n}} dy_{1}\\
&=C \int_{2}^{N} \frac{r^{n-1}}{r^{n}} dr\\
&= C(\ln N-\ln 2)  \rightarrow \infty,
\end{align*}
as $N \rightarrow \infty$.

\bigskip

For the converse, suppose
$$A=  \displaystyle{\sup_{y_{j}}} \  f_{1}(y_{1})f_{2}(y_{2})f_{3}(y_{3}) |y_{1}-y_{2}|^{r_{12}} |y_{1}-y_{3}|^{r_{13}} |y_{2}-y_{3}|^{r_{23}} < \infty,$$
then there exists measure zero set $E \subset \mathbb{R}^{n} \times  \mathbb{R}^{n} \times  \mathbb{R}^{n}$, such that
$$f_{1}(y_{1})f_{2}(y_{2})f_{3}(y_{3}) |y_{1}-y_{2}|^{r_{12}} |y_{1}-y_{3}|^{r_{13}} |y_{2}-y_{3}|^{r_{23}}
\leq A,$$
for all $(y_{1}, y_{2}, y_{3}) \in (\mathbb{R}^{n} \times \mathbb{R}^{n} \times  \mathbb{R}^{n}) \setminus E$. \\
By the definition of $\|f_{3}\|_{\infty}$,   for any $\varepsilon > 0$  there exists $F \subset \mathbb{R}^{n} $ such that $|F|>0$, and for all $y_{3} \in F$
 $$f_{3}(y_{3})  > \|f_{3}\|_{\infty}- \varepsilon.$$
So for all $(y_{1},...,y_{3}) \in (\mathbb{R}^{n} \times \mathbb{R}^{n} \times F) \setminus E$,
$$f_{2}(y_{2}) (\|f_{3}\|_{\infty}- \varepsilon) \leq \frac{1}{|y_{1}-y_{2}|^{r_{12}} |y_{2}-y_{3}|^{r_{23}}} \frac{A}{|y_{1}-y_{3}|^{r_{13}} f_{1}(y_{1})}.$$
Since $|E|=0$, for almost every $y_{3} \in \mathbb{R}^{n}$
$$| \{(y_{1},y_{2}) \in \mathbb{R}^{n} \times \mathbb{R}^{n}: (y_{1},y_{2},y_{3}) \in E \}|=0.$$
Denote $ \{(y_{1},y_{2}) \in \mathbb{R}^{n} \times \mathbb{R}^{n} : (y_{1},y_{2},y_{3}) \in E \}$ by $G_{y_{3}}$.
Because  $|F|>0$,  we can choose a $y_{3} \in F$ such that $|G_{y_{3}}|=0 $,
which  implies  for almost every $y_{1} \in \mathbb{R}^{n}$,
$$| \{ y_{2} \in \mathbb{R}^{n} : (y_{1},y_{2}) \in G_{y_{3}} \}|=0.$$
That means for almost every $ y_{1}$,  almost every $y_{2}$
$$ (y_{1},y_{2},y_{3}) \in  (\mathbb{R}^{n} \times \mathbb{R}^{n} \times \mathbb {R}^{n}) \setminus E.$$
Thus for almost every $y_{1}$, any small $\theta >0$,
\begin{align*}
\|f_{2}\|_{\frac{n}{r_{12}+r_{23}-\theta}} ( \|f_{3}\|_{\infty}- \varepsilon )
&\leq ( \int_{\mathbb{R}^{n}}  (|y_{1}-y_{2}|^{-r_{12}} |y_{2}-y_{3}|^{-r_{23}})^{\frac{n}{r_{12}+r_{23}-\theta}} dy_{2} )^{\frac{r_{12}+r_{23}-\theta}{n}}  \frac{A}{|y_{1}-y_{3}|^{r_{13}} f_{1}(y_{1})} \\
&= C (|y_{1}-y_{3}|^{n- \frac{r_{12}n + r_{23}n}{r_{12}+r_{23}-\theta}} )^{\frac{r_{12}+r_{23}-\theta}{n}}  \frac{A}{|y_{1}-y_{3}|^{r_{13}} f_{1}(y_{1})} \\
&= C |y_{1}-y_{3}|^{-\theta} \frac{A}{|y_{1}-y_{3}|^{r_{13}} f_{1}(y_{1})} \\
&= C \frac{A}{|y_{1}-y_{3}|^{r_{13}+\theta} f_{1}(y_{1})}.
\end{align*}
Take the infimum over $y_{1}$, then let $\varepsilon \rightarrow 0$,
$$\|f_{2}\|_{\frac{n}{r_{12}+r_{23}-\theta}} \|f_{3}\|_{\infty}   \leq C \   \displaystyle{\inf_{y_{1}} } \frac{A}{|y_{1}-y_{3}|^{r_{13}+\theta} f_{1}(y_{1})}
= C \  \frac{A}{\displaystyle{\sup_{y_{1}} } \ |y_{1}-y_{3}|^{ r_{13}+\theta} f_{1}(y_{1})}.  \eqno(3.21)$$
In  the proof of Lemma 2.2, we have stated that for the bilinear form,
$$\|f_{1}\|_{\frac{n}{r_{13}+\theta}, \infty}  \lesssim  \displaystyle{\sup_{y_{1}} } \  f_{1}(y_{1}) |y_{1}-y_{3}|^{r_{13}+\theta}.$$
Therefore, together with (3.21) we conclude that for any small $\theta >0$,
$$\|f_{1}\|_{\frac{n}{r_{13}+\theta}, \infty} \|f_{2}\|_{\frac{n}{r_{12}+r_{23}-\theta}} \|f_{3}\|_{\infty} \lesssim A.  \eqno(3.22)$$
Meanwhile applying the similar arguments we have
$$\|f_{1}\|_{\infty}  \|f_{2}\|_{\frac{n}{r_{12}+r_{23}-\theta}} \|f_{3}\|_{\frac{n}{r_{13}+\theta}, \infty}  \lesssim A,$$
$$\|f_{1}\|_{\frac{n}{r_{12}+r_{13}-\theta}} \|f_{2}\|_{\infty} \|f_{3}\|_{\frac{n}{r_{23}+\theta}, \infty} \lesssim A, \
\|f_{1}\|_{\frac{n}{r_{12}+r_{13}-\theta}}  \|f_{2}\|_{\frac{n}{r_{23}+\theta}, \infty} \|f_{3}\|_{\infty}\lesssim A.  \eqno(3.23)$$
and
$$\|f_{1}\|_{\infty} \|f_{2}\|_{\frac{n}{r_{12}+\theta}, \infty} \|f_{3}\|_{\frac{n}{r_{13}+r_{23}-\theta}} \lesssim A, \
 \|f_{1}\|_{\frac{n}{r_{12}+\theta}, \infty} \|f_{2}\|_{\infty} \|f_{3}\|_{\frac{n}{r_{13}+r_{23}-\theta}} \lesssim A.  \eqno(3.24)$$
Since for all $ 0< p_{j} < \infty$, $1 \leq j \leq 3$ satisfying $\displaystyle{ \sum_{j=1}^{3}   }  \frac{1}{p_{j}}= \frac{1}{n} ( r_{12} +r_{13}+ r_{23})$
and $\frac{1}{p_{j}} < \frac{1}{n}     \displaystyle{\sum_ {i \neq j}}   r_{ij} $ , we can always find a small $\theta$ such that
 $(\frac{1}{p_{1}}, \frac{1}{p_{2}}, \frac{1}{p_{3}})$ lies in the interior of the convex hull of
 $(\frac{r_{13}+\theta}{n}, \frac{r_{12}+r_{23}-\theta}{n}, 0 )$, $(\frac{r_{12}+r_{13}-\theta}{n}, 0,  \frac{r_{23}+\theta}{n} )$,  $(0, \frac{r_{12}+\theta}{n}, \frac{r_{13}+r_{23}-\theta}{n} ) $, $(0,  \frac{r_{12}+r_{23}-\theta}{n}, \frac{r_{13}+\theta}{n} )$, $(\frac{r_{12}+r_{13}-\theta}{n}, \frac{r_{23}+\theta}{n}, 0 )$,
$( \frac{r_{12}+\theta}{n}, 0,   \frac{r_{13}+r_{23}-\theta}{n} ) $.
Similar to the discussion in Section 2 and Section 3,
inequality (3.20) follows immediately from (3.22)-(3.24) together with the following property
$$\|f\|_{q} \leq C_{p,q}  \ \|f\|_{p, \infty}^{\frac{p}{q}}  \|f\|_{\infty}^{1-\frac{p}{q}} $$
provided $0< p< q < \infty$.
\end{proof}

\bigskip

\begin{remark}
However, our method does not work for multilinear cases more than three functions.
Beckner \cite{Beckner}  gave a multilinear fractional integral inequality as follows,
mainly applying the general rearrangement inequality (Theorem 3.8  \cite{Lieb-Loss}) and the conformally invariant property of (3.27) below.  \\
For nonnegative functions $f_{j}\in L^{p_{j}}(\mathbb{R}^{n})$, $j=1, \dots, N$ and $p_{j}>1$, $\sum\limits_{j=1}^{N}\frac{1}{p_{j}}>1$.
Let $0 \leq r_{ij} = r_{j i} <n $  be real numbers satisfying
$$\displaystyle{ \sum_{j=1}^{N}   }  \frac{1}{p_{j}^{\prime}}= \frac{1}{n} \sum\limits_{1 \leq i<j \leq N} r_{ij}  \eqno(3.25), $$
and for every $j$
$$\frac{1}{p_{j}^{\prime}} = \frac{1}{2n}   \displaystyle{\sum_ {i \neq j}}   r_{ij} \eqno(3.26)$$
with $p_{j}$ and $p_{j}^{\prime}$ dual exponents. Then
$$\int_{(\mathbb{R}^{n})^{N}}  \prod\limits_{j=1}^{N}  f_{j}(y_{j}) \prod\limits_{1 \leq i < j \leq N} |y_{i}-y_{j}|^{-r_{ij}} dy_{1} \dots dy_{N}
\leq C_{p_{j},  r_{ij}, n , N} \prod\limits_{j=1}^{N} \|f_{j}\|_{p_{j}} \eqno(3.27)$$
Condition (3.25) follows from homogeneity. Condition (3.26) is to ensure conformal invariance of inequality (3.27).
Similarly to the arguments  in the alternative proof of part (a) of Theorem 3.1, we have the following theorem.

\end{remark}

\bigskip

\begin{theorem}
Let $ r_{ij} >0 $  and  $r_{i j} = r_{j i}$.
Let $f_{j}$  be nonnegative measurable functions defined on $\mathbb{R}^{n}$, $1 \leq j \leq N$. Then
$$\displaystyle{\prod_{j=1}^{N}} \|f_{j}\|_{p_{j}} \leq  C_{p_{j},  r_{ij}, n , N} \
 \displaystyle{\sup_{y_{j}}} \ \displaystyle{\prod_{j=1}^{N}} f_{j}(y_{j}) \prod\limits_{1 \leq i < j \leq N} |y_{i}-y_{j}|^{r_{ij}},  \eqno(3.28)$$
holds for any  $0 < p_{j} < \infty$ satisfying
$$\frac{1}{p_{j}}= \frac{1}{2n}   \displaystyle{\sum_ {i \neq j}} r_{ij}, \ \
\displaystyle{\sum_{j=1}^{N}}  \frac{1}{p_{j}}= \frac{1}{n} \sum\limits_{1 \leq i<j \leq N} r_{ij}.    \eqno(3.29)$$

\end{theorem}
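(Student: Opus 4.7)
The plan is to imitate the alternative proof of Theorem 3.1(a), in which a supremum-type inequality is dualised against an HLS-type integral inequality. Here the natural integral inequality is Beckner's multilinear fractional integral estimate (3.27), recorded in the preceding remark, and the conditions (3.29) are precisely the dual of Beckner's hypotheses (3.25)--(3.26).

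Setting $A := \sup_{y_j} \prod_{j=1}^N f_j(y_j) \prod_{1\le i<j\le N}|y_i-y_j|^{r_{ij}}$ and assuming $A<\infty$ (otherwise (3.28) is vacuous), I would begin from
\[
\prod_{j=1}^N \|f_j\|_{p_j}^{p_j} = \int_{(\mathbb{R}^n)^N} \prod_{j=1}^N f_j(y_j)^{p_j}\,dy_1\cdots dy_N,
\]
multiply and divide the integrand by $\prod_{i<j}|y_i-y_j|^{r_{ij}}$, and split off one copy of each $f_j$:
\[
\prod_{j=1}^N \|f_j\|_{p_j}^{p_j} = \int \Bigl[\prod_j f_j(y_j)\!\!\prod_{i<j}\!|y_i-y_j|^{r_{ij}}\Bigr]\Bigl[\prod_j f_j(y_j)^{p_j-1}\!\!\prod_{i<j}\!|y_i-y_j|^{-r_{ij}}\Bigr]dy.
\]
The first bracket is pointwise bounded by $A$, so after extracting $A$ I am left with a Beckner-type integral carrying negative exponents $-r_{ij}$ on the distances.

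The second step is to apply (3.27) to $g_j := f_j^{p_j-1}$ with Beckner-exponents $q_j := p_j'$. Under this substitution the identity $q_j'=p_j$ turns Beckner's conditions $\frac{1}{q_j'}=\frac{1}{2n}\sum_{i\ne j}r_{ij}$ and $\sum_j 1/q_j' = \frac{1}{n}\sum_{i<j}r_{ij}$ into exactly the pair (3.29), while the output norm becomes $\|g_j\|_{q_j}=\|f_j\|_{p_j}^{p_j-1}$. Combining yields
\[
\prod_j \|f_j\|_{p_j}^{p_j} \;\lesssim\; A\,\prod_j \|f_j\|_{p_j}^{p_j-1},
\]
from which (3.28) follows upon division. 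This disposes of the regime $p_j>1$ for every $j$; to handle general $0<p_j<\infty$, I would invoke the scaling trick used at the end of the alternative proof of Theorem 3.1(a), replacing $f_j$ by $f_j^{s}$ with $s=p_1\cdots p_N$ so that $p_j\mapsto p_j/s$ becomes large and $r_{ij}\mapsto s\,r_{ij}$, preserving (3.29); taking $s$-th roots at the end restores the original inequality.

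The chief obstacle I anticipate lies not in the algebra of the dualisation but in verifying that Beckner's ancillary hypotheses translate into conditions on $p_j$ and $r_{ij}$ that are already implicit in (3.29). The constraint $q_j>1$ becomes $p_j<\infty$, which is part of the hypothesis, but Beckner's condition $\sum 1/q_j>1$ becomes $\sum_{i<j}r_{ij}<n(N-1)$, and one has to confirm this either from a tacit size restriction (such as Beckner's $r_{ij}<n$) or from the interplay of the two parts of (3.29); a careful reading of what Beckner actually needs for (3.27) should settle this without expanding the argument significantly.
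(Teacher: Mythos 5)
Your overall strategy---dualising the supremum against Beckner's inequality (3.27) exactly as in the alternative proof of Theorem 3.1(a)---is the same as the paper's, and your dualisation algebra (splitting off one copy of each $f_j$, taking $g_j=f_j^{p_j-1}$ with exponents $q_j=p_j'$, and translating (3.25)--(3.26) into (3.29)) is correct. The genuine gap is in the reduction to a situation where Beckner's hypotheses actually hold; this is not something a ``careful reading'' of Beckner settles, because (3.29) does not imply his ancillary conditions even when every $p_j>1$. Take $N=3$ and $r_{12}=r_{13}=r_{23}=0.9\,n$: then (3.29) gives $p_j=10/9>1$ and each $r_{ij}<n$, but your application of (3.27) uses norm exponents $q_j=p_j'=10$, and Beckner requires $\sum_j 1/q_j>1$, i.e.\ $\sum_{i<j}r_{ij}<n(N-1)$, which fails here ($2.7n>2n$); moreover the theorem allows arbitrary $r_{ij}>0$, so the requirement $r_{ij}<n$ can also fail outright. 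Your fallback scaling misfires as well: with $s=p_1\cdots p_N$ the new exponent is $p_j/s=1/\prod_{i\ne j}p_i$, which need not exceed $1$ in mixed cases (e.g.\ $n=1$, $r_{12}=4$, $r_{13}=r_{23}=0.1$ gives $p_1\approx0.49$, $p_3=10$, $s\approx2.4$, $p_1/s\approx0.2$), and it multiplies the $r_{ij}$ by $s$, pushing them further above $n$.

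The missing idea, which is the paper's actual move, is to normalise by the total weight: set $\alpha=\sum_{1\le i<j\le N}r_{ij}$ and replace $f_j$ by $f_j^{1/\alpha}$, $p_j$ by $p_j\alpha$, $r_{ij}$ by $r_{ij}/\alpha$; this preserves (3.29) and recovers (3.28) after taking $\alpha$-th roots. After this single rescaling the kernel powers sum to $1$, so each $r_{ij}/\alpha\le 1\le n$; each new norm exponent satisfies $\frac{1}{p_j\alpha}=\frac{1}{2n}\sum_{i\ne j}\frac{r_{ij}}{\alpha}\le\frac{1}{2n}$, hence $p_j\alpha\ge 2n>1$, so no separate small-exponent case arises at all; and $\sum_j \frac{1}{(p_j\alpha)'}=N-\frac1n>1$ (for the genuinely multilinear range $N\ge3$; $N=2$ is the bilinear case of Section 2). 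With these normalised data Beckner's inequality applies, and your dualisation computation, run verbatim, is precisely the paper's proof.
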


\begin{proof}
For any $ r_{ij} >0 $, denote $\alpha=\displaystyle{\sum_ {i \neq j}} r_{ij}$,
then it is easy to see (3.28) is equivalent to the following inequality.
$$\displaystyle{\prod_{j=1}^{N}} \|f_{j}^{1/\alpha}\|_{p_{j} \alpha} \leq  C_{p_{j},  r_{ij}, n , N}^{1/\alpha} \
 \displaystyle{\sup_{y_{j}}} \ \displaystyle{\prod_{j=1}^{N}} f_{j}(y_{j})^{1/\alpha} \prod\limits_{1 \leq i < j \leq N} |y_{i}-y_{j}|^{\frac{r_{ij}}{\alpha}}.$$
Below it is enough to show that
$$\displaystyle{\prod_{j=1}^{N}} \|f_{j}\|_{p_{j} \alpha} \lesssim
 \displaystyle{\sup_{y_{j}}} \ \displaystyle{\prod_{j=1}^{N}}  f_{j}(y_{j}) \prod\limits_{1 \leq i < j \leq N} |y_{i}-y_{j}|^{\frac{r_{ij}}{\alpha}}. \eqno(3.30)$$
holds for any  $f_{j} \in L^{p_{j}\alpha}(\mathbb{R}^{n})$ with $0 < p_{j}\alpha < \infty$ satisfying
$$\frac{1}{p_{j}\alpha}= \frac{1}{2n}   \displaystyle{\sum_ {i \neq j}}  \frac{r_{ij}}{\alpha}, \ \
\displaystyle{\sum_{j=1}^{N}}  \frac{1}{p_{j}\alpha}= \frac{1}{n} \sum\limits_{1 \leq i<j \leq N} \frac{r_{ij}}{\alpha}.  \eqno(3.31)$$
Suppose $\displaystyle{\sup_{y_{j}}} \ \displaystyle{\prod_{j=1}^{N}} f_{j}(y_{j}) \prod\limits_{1 \leq i < j \leq N} |y_{i}-y_{j}|^{\frac{r_{ij}}{\alpha}} = A < \infty$.
We can write
\begin{align*}
& \ \ \ \ \ \|f_{1}\|_{p_{1}\alpha}^{p_{1}\alpha} \cdots\|f_{N}\|_{p_{N}\alpha}^{p_{N}\alpha} \\
&= \int_{(\mathbb{R}^{n})^{N}}  \displaystyle{\prod_{j=1}^{N}} f_{j}(y_{j})^{p_{j}\alpha} dy_{1} \dots dy_{N} \\
&= \int_{(\mathbb{R}^{n})^{N}} \displaystyle{\prod_{j=1}^{N}} f_{j}(y_{j}) \prod\limits_{1 \leq i < j \leq N} |y_{i}-y_{j}|^{\frac{r_{ij}}{\alpha}}
\displaystyle{\prod_{j=1}^{N}} f_{j}(y_{j})^{p_{j}\alpha-1} \prod\limits_{1 \leq i < j \leq N} |y_{i}-y_{j}|^{-\frac{r_{ij}}{\alpha}}
dy_{1} \dots dy_{N} \\
&\leq A \int_{(\mathbb{R}^{n})^{N}} \displaystyle{\prod_{j=1}^{N}} f_{j}(y_{j})^{p_{j}\alpha-1}
\prod\limits_{1 \leq i < j \leq N} |y_{i}-y_{j}|^{-\frac{r_{ij}}{\alpha}} dy_{1} \dots dy_{N}.
\end{align*}
Since every $p_{j}\alpha$ satisfies (3.31)  and  $\sum\limits_{1 \leq i<j \leq N} \frac{r_{ij}}{\alpha}=1$,
we have  $(p_{j}\alpha)^{\prime}>1$ and $\sum\limits_{j=1}^{N}\frac{1}{(p_{j}\alpha)^{\prime}}>1$.
This allows us to apply  inequality (3.27) to get
\begin{align*}
&\ \ \ \ \ \int_{(\mathbb{R}^{n})^{N}} \displaystyle{\prod_{j=1}^{N}} f_{j}(y_{j})^{p_{j}\alpha-1} \prod\limits_{1 \leq i < j \leq N} |y_{i}-y_{j}|^{-\frac{r_{ij}}{\alpha}} dy_{1} \dots dy_{N}  \\
&=\int_{(\mathbb{R}^{n})^{N}} \displaystyle{\prod_{j=1}^{N}} f_{j}(y_{j})^{\frac{p_{j}\alpha}{(p_{j}\alpha)^{\prime}}}
 \prod\limits_{1 \leq i < j \leq N} |y_{i}-y_{j}|^{-\frac{r_{ij}}{\alpha}} dy_{1} \dots dy_{N} \\
&\lesssim  \displaystyle{\prod_{j=1}^{N}} \|f_{j}^{\frac{p_{j}\alpha}{(p_{j}\alpha)^{\prime}}}\|_{(p_{j}\alpha)^{\prime}}
=    \displaystyle{\prod_{j=1}^{N}} \|f_{j}\|_{p_{j}\alpha}^{p_{j}\alpha-1}.
\end{align*}
Combining them together gives
$$\|f_{1}\|_{p_{1}\alpha}^{p_{1}\alpha} \cdots \|f_{k+1}\|_{p_{N}\alpha}^{p_{N}\alpha} \lesssim A \  \displaystyle{\prod_{j=1}^{N}} \ \|f_{j}\|_{p_{j}\alpha}^{p_{j}\alpha-1}.$$
This implies that
$$\displaystyle{\prod_{j=1}^{N}} \|f_{j}\|_{p_{j}\alpha} \lesssim  A
=\displaystyle{\sup_{y_{j}}} \ \displaystyle{\prod_{j=1}^{N}} f_{j}(y_{j}) \prod\limits_{1 \leq i < j \leq N} |y_{i}-y_{j}|^{\frac{r_{ij}}{\alpha}},$$
which gives (3.30).
Therefore by the equivalence as discussed above, this completes the proof of Theorem 3.7.

\end{proof}

\bigskip

\hspace{-13pt}{\bf Question.} Similarly  to the proof of Theorem 3.5, it is not hard to see the necessary conditions for inequality (3.28) to hold are  homogeneity condition and
 for every $j$, $1 \leq j \leq N$,
$$\frac{1}{p_{j}} < \frac{1}{n}  \displaystyle{\sum_ {i \neq j}}   r_{ij}. $$
We have already shown that it is sufficient for (3.28) to hold in the trilinear case together with the homogeneity condition.
An interesting problem is whether inequality (3.28) holds for any $p_{j}$ satisfying
$$\frac{1}{p_{j}} < \frac{1}{n}  \displaystyle{\sum_ {i \neq j}}   r_{ij},  \  \  \  \displaystyle{ \sum_{j=1}^{N} }  \frac{1}{p_{j}}= \frac{1}{n} \sum\limits_{1 \leq i<j \leq N} r_{ij},$$
where $N>3$.

\bigskip

\section{Sharp versions for geometric inequalities}

\medskip

\hspace{-13pt}{\it 1. Sharp constant for bilinear geometric inequality}\quad

\begin{theorem}
Let $0<p< \infty$ and $f$, $g$ be in  $ L^{p}(\mathbb{R}^{n})$.
For the geometric inequality
$$\|f\|_{L^{p}(\mathbb{R}^{n})}  \|g\|_{L^{p}(\mathbb{R}^{n})} \leq C_{p,n} \ \displaystyle{\sup_{x,y}} \ f(x)g(y)|x-y|^{\frac{2n}{p}}, \eqno(4.1)$$
the minimum constant $C_{p,n}$  is  obtained for $ f= const \cdot h$, and $ g= const \cdot h$,
where
$$h(x)=(1+|x|^{2})^{-\frac{n}{p}}.$$

\end{theorem}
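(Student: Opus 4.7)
The plan is to transfer (4.1) to the sphere $\mathbb{S}^{n}$ via stereographic projection, where the conformal symmetry identifies the constant function as the natural extremizer. Using $\pi\colon \mathbb{S}^{n}\setminus\{\mathrm{N}\}\to\mathbb{R}^{n}$, I would define $F(P):= f(\pi(P))\bigl((1+|\pi(P)|^{2})/2\bigr)^{n/p}$ and $G(P)$ similarly from $g$. The chord-length identity $|P-Q| = 2|x-y|/\sqrt{(1+|x|^{2})(1+|y|^{2})}$, together with the Jacobian $d\sigma_{\mathbb{S}^{n}} = \bigl(2/(1+|x|^{2})\bigr)^{n}dx$, give $\|F\|_{L^{p}(\mathbb{S}^{n})} = \|f\|_{L^{p}(\mathbb{R}^{n})}$ and $F(P)G(Q)|P-Q|^{2n/p} = f(x)g(y)|x-y|^{2n/p}$. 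Hence (4.1) is equivalent to
\[
\|F\|_{L^{p}(\mathbb{S}^{n})}\|G\|_{L^{p}(\mathbb{S}^{n})} \le C_{p,n}\sup_{P,Q\in\mathbb{S}^{n}} F(P)G(Q)|P-Q|^{2n/p},
\]
and under this correspondence the candidate extremizer $f=g=c\cdot h$ pulls back to $F\equiv G\equiv\mathrm{const}$ on the sphere.

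I focus first on the diagonal case $F=G$ and write $N(F):=\sup_{P,Q}F(P)F(Q)|P-Q|^{2n/p}$. The decisive structural fact is the antipodal pointwise bound: since $|P-(-P)|=2$, specializing $Q=-P$ yields $F(P)F(-P)\le N(F)/4^{n/p}$ for a.e.\ $P$. If $F$ is antipodally symmetric, $F(P)=F(-P)$, this gives $\|F\|_{\infty}^{2}\le N(F)/4^{n/p}$, and combining with the trivial bound $\|F\|_{p} \le |\mathbb{S}^{n}|^{1/p}\|F\|_{\infty}$ produces
\[
\|F\|_{p}^{2}\le (|\mathbb{S}^{n}|/2^{n})^{2/p}N(F),
\]
with the H\"older and pointwise equalities forcing $F\equiv\mathrm{const}$. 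This yields both the sharp constant and the extremizer within the antipodally symmetric class.

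The main remaining task---and the main obstacle---is to show the sharp constant is actually attained by an antipodally symmetric $F$. My plan is to establish existence of an extremizer via a concentration-compactness argument, exploiting the conformal invariance of $\|F\|_{p}^{2}/N(F)$ under $SO(n+1)$ (equivalently, the M\"obius group on $\mathbb{R}^{n}$) to localize a maximizing sequence. The natural symmetrization $\tilde F(P):= \sqrt{F(P)F(-P)}$ is antipodally symmetric and satisfies $N(\tilde F)\le N(F)$ by the Cauchy--Schwarz split
\[
\tilde F(P)\tilde F(Q)|P-Q|^{2n/p} = \sqrt{F(P)F(Q)|P-Q|^{2n/p}}\cdot\sqrt{F(-P)F(-Q)|(-P)-(-Q)|^{2n/p}} \le N(F),
\]
but $\|\tilde F\|_{p}\le\|F\|_{p}$ by AM--GM goes the wrong way for a one-step reduction. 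I therefore expect to combine this symmetrization with the rotational invariance of the sphere problem: since every rotate of an extremizer is again extremal, iterated two-point polarization over hyperplanes through the origin, or an averaging argument, should force an extremizer to be $SO(n+1)$-invariant and hence constant. Verifying that such a polarization does not increase $N(F)$ when the two relevant points $(P,Q)$ lie on opposite sides of the reflecting hyperplane is the delicate technical point, because in that configuration $|P-Q|$ exceeds the distance to the reflected partner. Finally, the off-diagonal case $F\neq G$ should reduce to the diagonal via the observation $N(\sqrt{FG},\sqrt{FG})\le N(F,G)$ together with a polarization that compares $\|F\|_{p}\|G\|_{p}$ to the diagonal norms.

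Once constants are identified as the sphere extremizer, the corresponding $f$ on $\mathbb{R}^{n}$ is proportional to $h(x) = (1+|x|^{2})^{-n/p}$, and a direct beta-function computation yields $\|h\|_{p}^{p} = \int_{\mathbb{R}^{n}}(1+|x|^{2})^{-n}\,dx = \pi^{n/2}\Gamma(n/2)/\Gamma(n)$, identifying the sharp constant $C_{p,n}=\|h\|_{p}^{2}$.
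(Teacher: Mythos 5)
Your reduction to the sphere and the computation inside the antipodally symmetric class are sound: the antipodal bound $\|F\|_{\infty}^{2}\,2^{2n/p}\leq N(F)$ (which needs a small density-point argument, since $\sup$ here is an essential supremum and the antidiagonal is a null set in $\mathbb{S}^{n}\times\mathbb{S}^{n}$, but that is fixable) combined with $\|F\|_{p}\leq|\mathbb{S}^{n}|^{1/p}\|F\|_{\infty}$ does give the constant $2^{-2n/p}|\mathbb{S}^{n}|^{2/p}=\|h\|_{p}^{2}$, attained by constants, \emph{within that class}. But the heart of the theorem is precisely the step you defer: showing that arbitrary $F,G$ cannot do better. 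Neither of your two proposed ingredients is carried out, and both face genuine obstacles. Existence of an extremizer by concentration--compactness is delicate here because the quotient $\|F\|_{p}\|G\|_{p}/N(F,G)$ is invariant under the noncompact conformal group and the denominator is a sup-type ($L^{\infty}$) functional: along a maximizing sequence the $L^{p}$ mass can concentrate and is only lower semicontinuous, while $N$ is lower semicontinuous in the same direction, so no limit extremizer comes for free. And even granting existence, polarization with respect to hyperplanes through the origin (your monotonicity check for $N$ can in fact be made to work) at best drives you toward cap symmetrization about a pole, not $SO(n+1)$-invariance; upgrading ``extremizer'' to ``constant'' would require an equality analysis, and equality cases for sup-type functionals are notoriously weak --- indeed the paper itself remarks that it cannot characterize the optimizers of (4.1). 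Your off-diagonal reduction is likewise only sketched.

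The paper closes exactly this gap by a different route that avoids existence and classification of extremizers altogether: Lemma 4.2 shows (via Brunn--Minkowski) that symmetric decreasing rearrangement does not increase $\sup_{x,y}f(x)g(y)|x-y|^{2n/p}$; Lemma 4.3 gives the conformal invariance you also use; then the Carlen--Loss competing-symmetries iteration $f_{k}=(\mathcal{R}\mathcal{D})^{k}f$, alternating rearrangement with a conformally transplanted $90^{\circ}$ rotation, converges in $L^{p}$ to $c\,h$ with $h(x)=(1+|x|^{2})^{-n/p}$ (Lieb--Loss, Theorem 4.6), and weak$^{*}$ lower semicontinuity of the $L^{\infty}$ norm along an a.e.\ convergent subsequence passes the monotone bound to the limit, proving directly that the ratio for any $(f,g)$ dominates the ratio for $(h,h)$. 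To complete your argument you would need either to supply the compactness and symmetry-forcing steps in full, or to replace them by such an iteration; as it stands the proposal establishes the sharp constant only on the antipodally symmetric subclass.
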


Later we can see the sharp constant $C_{p,n}= 2^{-\frac{2n}{p}} |\mathbb{S}^{n}|^{\frac{2}{p}}$,
where $|\mathbb{S}^{n}|$ is the surface area of the unit sphere $\mathbb{S}^{n}$.

\medskip

Let $q \in (0, \infty)$, $p \in (1, \infty)$.
Form
$$\displaystyle{\sup_{x,y}} \ f(x)^{\frac{p}{q}} g(y)^{\frac{p}{q}} |x-y|^{\frac{2n}{q}}
= (\displaystyle{\sup_{x,y}} \ f(x) g(y)|x-y|^{\frac{2n}{p}})^{\frac{p}{q}}, $$
and
$$\|f^{\frac{p}{q}}\|_{L^{q}(\mathbb{R}^{n})}= ( \|f\|_{L^{p}(\mathbb{R}^{n})})^{\frac{p}{q}}, \
\|g^{\frac{p}{q}}\|_{L^{q}(\mathbb{R}^{n})}= ( \|g\|_{L^{p}(\mathbb{R}^{n})})^{\frac{p}{q}},$$
we observe that
if $f, g$ is a pair of extremals for $p \in (1, \infty)$, then
$f^{\frac{p}{q}}$, $g^{\frac{p}{q}}$ is a pair of extremals for any $q \in (0, \infty)$.
So it suffices to study the extremals for the case  when $1< p <\infty$.

 \medskip

In this section, we only consider such nonnegative measurable functions $f$, $g$ that the right hand side of (4.1) is finite.
For every nonnegative measurable function $f$, its  layer cake representation  is $f(x)= \int_{0}^{\infty} \chi_{\{f>t\}} (x) dt $,  where $ \chi_{\{f>t\}}$ is the characteristic function of the level set $ \{x: f(x)>t \}$.
For $A\subset \mathbb{R}^{n} $ of finite Lebesgue measure, we define the symmetric rearrangement of $A$ as $A^{\ast}:=\{x: |x|<r \} \equiv B (0, r)$ with $|A^{\ast}| =|A|$.
That is, $r^{n}=\frac{|A|}{v_{n}}$, and $v_{n}$ is the volume of unit ball in $\mathbb{R}^{n}$.
We then define the symmetric decreasing rearrangement of nonnegative measurable function $f$ as
$$\mathcal{R}f(x)=f^{\ast } (x):= \int_{0}^{\infty} \chi_{\{f>t\}^{\ast}} (x) dt,  $$
and define the Steiner symmetrisation of  $f$ with respect to the $j$-th coordinate as
$$\mathcal{R}_{j}f(x_{1}, \dots,x_{n})=f^{\ast j}(x_{1}, \dots,x_{n}) :=  \int_{0}^{\infty} \chi_{\{f(x_{1}, \dots ,x_{j-1}, \cdot, x_{j+1}, \dots , x_{n})>t\}^{\ast}}(x_{j}) dt.$$
We  observe that $f$ and $f^{\ast}$ are equimeasurable which means
$$|\{x: f(x) >t\} | = | \{x: f^{\ast}(x) > t\}|.$$
Together with the layer cake representation of $f$, hence $\|f\|_{p} = \|\mathcal{R}f\|_{p}$ for any $f\in L^{p}(\mathbb{R}^{n})$, $1\leq p \leq\infty$.
Besides,  $\|f\|_{p}= \| \mathcal{R}_{n} \dots \mathcal{R}_{1}f \|_{p}$ follows from Fubini's theorem.

We recall another related decreasing rearrangement of $f$ defined on $[0, \infty)$  as
$$f_{\ast}(t)= \inf \{\lambda > 0: m_{f}(\lambda) \leq t\},$$
where $m_{f}$ is the distrution function of $f$,
 $$m_{f}(\lambda) := |\{x\in \mathbb{R}^{n}: f(x)> \lambda \}|.$$
Then it is easy to see for any $x \in \mathbb{R}^{n}$,
$$f^{\ast}(x)= f_{\ast}(v_{n} |x|^{n} ).$$
As is well  known,  for $0 \leq s, t< \infty$
\begin{center}
$ f_{\ast}(s) > t$ \  if and only if \ $| \{x \in \mathbb{R}^{n}: f(x) >t \} | >s$.
\end{center}
By the relation of $f^{\ast}$  and $f_{\ast}$, we have for any $s\in \mathbb{R}^{n}$, $t \geq 0$
\begin{center}
$ f^{\ast}(s) > t$ \  if and only if \  $| \{x \in \mathbb{R}^{n}: f(x) >t \} | >v_{n} |s|^{n}$.
\end{center}

\bigskip

\begin{lemma}
Let $f, g$ be defined on $\mathbb{R}^{n}$, then
$$\displaystyle{\sup_{s,t}} f^{\ast}(s) g^{\ast}(t) |s-t| \leq \displaystyle{\sup_{x,y}} f(x) g(y) |x-y|.   \eqno(4.2) $$
\end{lemma}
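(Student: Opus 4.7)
The plan is to argue by contradiction using super-level sets and the Brunn--Minkowski inequality. Write $A := \sup_{x,y} f(x) g(y)|x-y|$ for the right-hand side of~(4.2). Suppose there exist $s_0, t_0 \in \mathbb{R}^n$ with $f^{\ast}(s_0) g^{\ast}(t_0)|s_0-t_0| > A$, and pick $u \in (0, f^{\ast}(s_0))$, $v \in (0, g^{\ast}(t_0))$ still satisfying $uv|s_0-t_0| > A$. Setting $E := \{f > u\}$ and $F := \{g > v\}$, the characterization ``$f^{\ast}(s) > u$ if and only if $|\{f>u\}| > v_n |s|^n$'' recalled just before the lemma forces $|E| > v_n|s_0|^n$ and $|F| > v_n|t_0|^n$; in the radii $r_E := (|E|/v_n)^{1/n}$, $r_F := (|F|/v_n)^{1/n}$ this reads $r_E > |s_0|$, $r_F > |t_0|$, hence $r_E + r_F > |s_0 - t_0|$.

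Next I would apply the Brunn--Minkowski inequality to $E$ and $-F$:
\[
|E - F|^{1/n} \geq |E|^{1/n} + |F|^{1/n} = v_n^{1/n}(r_E + r_F) > v_n^{1/n}|s_0-t_0|,
\]
so $|E - F| > v_n|s_0 - t_0|^n$. The crux is then to upgrade this set-level estimate into a positive $2n$-dimensional measure of pairs in $E \times F$. By Fubini,
\[
\bigl|\{(x,y) \in E \times F : |x-y| > r\}\bigr| = \int_{|z|>r} (\chi_E \ast \widetilde{\chi}_F)(z)\, dz,
\]
with $\widetilde{\chi}_F(y) := \chi_F(-y)$ and $(\chi_E \ast \widetilde{\chi}_F)(z) = |E \cap (F+z)|$. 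Since $\chi_E \in L^1$ and $\chi_F \in L^{\infty}$, the convolution is uniformly continuous and its positivity set $U$ is open. A Lebesgue density-point argument places the differences of density points of $E$ with those of $F$ inside $U$, so Brunn--Minkowski applied there gives $|U| \geq (|E|^{1/n} + |F|^{1/n})^n > v_n |s_0 - t_0|^n$. Hence $U \not\subseteq \overline{B(0,|s_0-t_0|)}$, and since $U$ is open, $U \cap \{|z|>|s_0-t_0|\}$ is a non-empty open set, so the integral above with $r = |s_0-t_0|$ is strictly positive.

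On the resulting positive-measure set of pairs one has $f(x) > u$, $g(y) > v$, and $|x-y| > |s_0 - t_0|$, giving $f(x)g(y)|x-y| > uv|s_0-t_0| > A$, contradicting that $A$ is the essential supremum in~(4.2). The principal obstacle is precisely this conversion: Brunn--Minkowski controls the algebraic difference $E - F$, which need not be Lebesgue measurable and may be strictly larger than the positivity set of $\chi_E \ast \widetilde{\chi}_F$, so the argument must work inside the open set $U$, using continuity of the convolution and Lebesgue density points to apply Brunn--Minkowski there.
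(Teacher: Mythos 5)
Your proof is correct, and it shares the paper's overall skeleton (argue by contradiction, pass to super-level sets of $f$ and $g$, use the characterisation $f^{\ast}(s)>u \iff |\{f>u\}|>v_{n}|s|^{n}$, and invoke Brunn--Minkowski), but it executes the decisive step differently. The paper bounds $\sup_{y_1\in E,\,y_2\in F}|y_1-y_2|$ from below by $|s_{0}-t_{0}|$ by passing to rearranged sets, using $E^{\ast}+F^{\ast}\subset (E-F)^{\ast}$ together with the comparison $\sup_{z\in C}|z|\geq \sup_{z\in C^{\ast}}|z|$, and then multiplies by the pointwise lower bounds for $f,g$ on $E\times F$; you instead quantify matters through the convolution identity $|\{(x,y)\in E\times F:|x-y|>r\}|=\int_{|z|>r}(\chi_{E}\ast\widetilde{\chi}_{F})(z)\,dz$, the openness of the positivity set $U$ of this continuous convolution, a density-point argument giving $E_{d}-F_{d}\subset U$, and Brunn--Minkowski inside $U$, so as to exhibit a set of pairs of \emph{positive} $2n$-dimensional measure on which $f(x)g(y)|x-y|>A$. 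What your route buys is an explicit treatment of two points the paper leaves implicit: the algebraic difference $E-F$ of measurable sets need not be measurable (the paper writes $|E-F|$ and $(E-F)^{\ast}$ without comment), and since $\sup$ means essential supremum one needs a positive-measure set of good pairs rather than a single one; what the paper's route buys is brevity and reuse of the elementary fact about suprema over $C$ versus $C^{\ast}$. Two small points you should spell out: the application of Brunn--Minkowski ``inside $U$'' is cleanest via inner regularity, taking compact $K_{1}\subset E_{d}$, $K_{2}\subset F_{2,d}$ so that $K_{1}-K_{2}$ is compact, measurable and contained in $U$, and then letting $|K_{i}|$ exhaust $|E|,|F|$; and both your argument ($\chi_{E}\in L^{1}$, finiteness of $r_{E}$) and the paper's implicitly use that the super-level sets have finite measure, which holds in the $L^{p}$ setting where the lemma is applied.
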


\begin{proof}
Suppose $\displaystyle{\sup_{x,y}} f(x) g(y) |x-y|=A$.
We assume for a contradiction  that
$$\displaystyle{\sup_{s,t}} f^{\ast}(s) g^{\ast}(t) |s-t|  > A.$$
Then there exist  positive $\varepsilon$  and a set $ G \subset \mathbb{R}^{n} \times \mathbb{R}^{n}$ such that $|G|>0$ and
for all $(s_{0}, t_{0}) \in G$
we have
$$f^{\ast}(s_{0}) g^{\ast}(t_{0}) |s_{0}-t_{0} | > A+\varepsilon .$$
 It follows from $f^{\ast}(s_{0}) > (A+ \varepsilon) (g^{\ast}(t_{0}) | s_{0}-t_{0} |)^{-1}$ and the property of decreasing rearrangement discussed above  that
$$ | \{ x: f(x) > (A+\varepsilon) (g^{\ast}(t_{0}) | s_{0}-t_{0} |)^{-1} \} |> v_{n} |s_{0}|^{n}.   \eqno(4.3) $$
Denote the set $\{ x: f(x) > (A+\varepsilon) (g^{\ast}(t_{0}) | s_{0}-t_{0} |)^{-1} \}$ by $E$,
so $$g^{\ast}(t_{0}) >( A+\frac{\varepsilon}{2}) \  (\displaystyle{\inf_{x\in E }} f(x) | s_{0}-t_{0} | )^{-1}.$$
Applying the property of decreasing rearrangement again, we have
$$| \{ y: g(y) >  ( A+\frac{\varepsilon}{2}) (\displaystyle{\inf_{x\in E }} f(x) | s_{0}-t_{0} | )^{-1}  \} |> v_{n} |t_{0}|^{n}.   \eqno(4.4)$$
Denote the set $\{ y: g(y) >( A+\frac{\varepsilon}{2}) (\displaystyle{\inf_{x\in E }} f(x) | s_{0}-t_{0} | )^{-1} \}$ by $F$.
Then  $s_{0}\in E^{\ast}, t_{0}\in F^{\ast}$.
It turns out that
$$ \displaystyle{\sup_{x\in E, y\in F}}  |x-y| \geq | s_{0}-t_{0} |.     \eqno(4.5)$$
The reason is as follows. In the first place, it is easy to observe for any measurable set $C\subset \mathbb{R}^{n} $
$$ \displaystyle{\sup_{x\in C}}  \ |x| \geq   \displaystyle{\sup_{x\in C^{\ast}}} \  |x|.$$
If $\displaystyle{\sup_{x\in C}} |x| < \displaystyle{\sup_{x\in C^{\ast}}} |x| \equiv s$,
there exist positive $\delta$ and a measure zero set $M \subset \mathbb{R}^{n}$, such that
$|x| < s-\delta$ for any $x \in C \setminus M$.
So $$C \setminus M \subset B(0, s-\delta),$$
where $B(0, s-\delta)$ is the ball centred at $0$ with radius $s-\delta$.
This shows $(C \setminus M)^{\ast} = C^{\ast} \subset B(0, s-\delta)$, which is a contradiction.

\medskip

Hence based on the property  $\displaystyle{\sup_{x\in C}} \  |x| \geq   \displaystyle{\sup_{x\in C^{\ast}}}  \ |x|$ for any  set $C\subset \mathbb{R}^{n}$ of finite Lebesgue measure, we have
$$\displaystyle{\sup_{x\in E, y\in F}}  |x-y| =  \displaystyle{\sup_{z\in E- F}} |z|   \geq   \displaystyle{\sup_{z\in (E- F)^{\ast}}} |z|.$$
The Brunn-Minkowski inequality tells for measurable sets with finite volume $E$ and $F$,
$$|E-F|^{1/n} \geq |E|^{1/n}+|F|^{1/n}. $$
By the definition of symmetric rearrangement of $E$ and $F$, we have
$$ E^{\ast} = B(0, r_{1} ), \ \ F^{\ast} = B(0, r_{2}), $$
where their radius are $r_{1}=(\frac{|E|}{v_{n}})^{1/n}$,  $r_{2}= (\frac{|F|}{v_{n}})^{1/n} $ respectively. \\
Then $E^{\ast}+F^{\ast}$ is the ball centred at $0$ with radius $r_{1}+r_{2}$, and
$$E^{\ast}- F^{\ast} = E^{\ast}+ F^{\ast} = B(0,  r_{1}+r_{2} ).$$
Together with  the Brunn-Minkowski inequality, we have
\begin{align*}
 |(E-F)^{\ast}|^{1/n} &=|E-F|^{1/n}  \geq |E|^{1/n}+|F|^{1/n}  \\
&=|E^{\ast}|^{1/n}+|F^{\ast}|^{1/n}\\
&= v_{n}^{1/n} r_{1} + v_{n}^{1/n} r_{2},
\end{align*}
which means
$$|(E-F)^{\ast}| \geq v_{n} (r_{1} + r_{2})^{n}=  | E^{\ast}+F^{\ast}|.$$
Therefore  $$s_{0}- t_{0}\in  E^{\ast}-F^{\ast}=  E^{\ast}+F^{\ast} \subset  (E-F)^{\ast}.$$
Moreover
$$ \displaystyle{\sup_{x\in E, y\in F}}  |x-y| =   \displaystyle{\sup_{z\in E- F}} |z|  \geq   \displaystyle{\sup_{z\in (E- F)^{\ast}}} |z|  \geq   \displaystyle{\sup_{x\in E^{\ast},  y \in F^{\ast} }}  |x-y|  \geq | s_{0}-t_{0} |, \eqno(4.6) $$
which completes the proof of  (4.5).

\medskip

 Now (4.4)  implies that for any $x \in E$, $y\in F$
$$ f(x) g(y) |x-y|  > (A + \frac{\varepsilon}{2} ) | s_{0}-t_{0} | ^{-1} |x-y|,$$
thus
$$ \displaystyle{\sup_{x\in E, y\in F}} f(x) g(y) |x-y|  \geq   (A + \frac{\varepsilon}{2} ) | s_{0}-t_{0} | ^{-1}    \displaystyle{\sup_{x\in E, y\in F}} |x-y|.$$
Consequently, together with (4.5) we get
\begin{align*}
\displaystyle{\sup_{x,y}} f(x) g(y) |x-y|
& \geq   (A + \frac{\varepsilon}{2} ) | s_{0}-t_{0} | ^{-1}    \displaystyle{\sup_{x\in E, y\in F}} |x-y| \\
& \geq   (A + \frac{\varepsilon}{2} ) | s_{0}-t_{0} | ^{-1}   | s_{0}-t_{0} |  \\
&>A.
\end{align*}
This  is a contradiction.

\end{proof}

\medskip

However, we do not know when there is equality in (4.2). One might guess that strict inequality (4.2) holds only if
$f(x)=f^{\ast}(x-y)$ and $g(y)=f^{\ast}(x-y)$ for some $y$ in $\mathbb{R}^{n}$.
By the following counterexample, we show that this is not true.
In the one-dimensional case, let
$$f(x)=4 \chi_{|x| \leq |E_{1}|}+ \chi_{|E_{1}| < x \leq |E_{1}|+ 2|E_{2}|}$$
with $|E_{1}|>|E_{2}|$, and $f=g$.
Then
$$f^{\ast}(x)=4 \chi_{|x| \leq |E_{1}|}+ \chi_{|E_{1}| <|x| \leq  |E_{1}|+ |E_{2}|}.$$
It is easy to check that
$$\displaystyle{\sup_{x,y}} f(x) g(y) |x-y|= \max \{32|E_{1}|, 8(|E_{1}|+|E_{2}|), 2|E_{2}| \}= 32|E_{1}|,$$
and
$$\displaystyle{\sup_{x,y}} f^{\ast}(x) g^{\ast}(y) |x-y| = \max \{32|E_{1}|, 4(2|E_{1}|+|E_{2}|), 2(|E_{1}|+|E_{2}|) \}= 32|E_{1}|.$$
So there are other classes of examples where equality holds.

\bigskip

Due to  Lemma 4.2, it suffices to seek   optimisers amongst the class of  all symmetric decreasing functions.

Let $\mathcal{S}$ be the stereographic projection from $\mathbb{R}^{n}$ to the unit sphere $\mathbb{S}^{n}$  with
$$\mathcal{S}(x)=(\frac{2x_{1}}{1+|x|^{2}},  \dots, \frac{2x_{n}}{1+|x|^{2}}, \frac{1-|x|^{2}}{1+|x|^{2}}),$$
where $ x=( x_{1}, \dots, x_{n}) \in \mathbb{R}^{n}$.
So $$\mathcal{S}^{-1} (s) = (\frac{s_{1}}{1+s_{n+1}}, \dots, \frac{s_{n}}{1+s_{n+1}} ).$$
For $f\in L^{p}(\mathbb{R}^{n})$, define
$$(\mathcal{S}^{\ast}f)(s):= |J_{\mathcal{S}^{-1}}(s)|^{1/p}f(\mathcal{S}^{-1}(s)),  \  (\mathcal{S}^{\ast}g)(t):=|J_{\mathcal{S}^{-1}}(t)|^{1/p}g(\mathcal{S}^{-1}(t)), \eqno(4.7)$$
where $J_{\mathcal{S}^{-1}}$ is the Jacobian determinant of the map $\mathcal{S}^{-1}$
$$ |J_{\mathcal{S}^{-1}}(s)| = (\frac{1}{1+ s_{n+1}})^{n}= \frac{1}{2^{n}}  (1+|\mathcal{S}^{-1} (s)|^{2})^{n}.    \eqno(4.8)$$
Then we have the invariance of the geometric inequality under the stereographic projection shown as the following lemma.

\medskip

\begin{lemma}
For $f, g \in L^{p}(\mathbb{R}^{n})$, denote $F(s)=(\mathcal{S}^{\ast}f)(s), G(t)= (\mathcal{S}^{\ast}g)(t)$.  Then
$$\displaystyle{\sup_{x,y\in \mathbb{R}^{n}}} f(x) g(y) |x-y|^{\frac{2n}{p}} =\displaystyle{\sup_{s,t\in \mathbb{S}^{n}}} F(s) G(t) |s-t|^{\frac{2n}{p}},$$
and
$$\|F\|_{L^{p}(\mathbb{S}^{n})}= \|f\|_{L^{p}(\mathbb{R}^{n})},  \ \|G\|_{L^{p}(\mathbb{S}^{n})} = \|g\|_{L^{p}(\mathbb{R}^{n})}.$$

\end{lemma}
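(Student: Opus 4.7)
The plan is to reduce both claims to two explicit pointwise identities: a Jacobian formula (already recorded in (4.8)) and the classical chord-length identity for stereographic projection. Once these are in hand, the lemma collapses to routine change-of-variables bookkeeping.

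First I would establish the chord-length formula
\[
|\mathcal{S}(x)-\mathcal{S}(y)|^{2} \;=\; \frac{4|x-y|^{2}}{(1+|x|^{2})(1+|y|^{2})}, \qquad x,y\in\mathbb{R}^{n},
\]
by plugging the coordinate expression for $\mathcal{S}$ into $|\mathcal{S}(x)-\mathcal{S}(y)|^2$ and simplifying: the cross terms reorganize into $(1+|x|^2)(1+|y|^2) - (1-|x|^2)(1-|y|^2) - 4\langle x,y\rangle$, which equals $2(|x|^2+|y|^2-2\langle x,y\rangle) = 2|x-y|^2$ after using $|\mathcal{S}(x)|=1$. Raising to the power $n/p$ and combining with (4.8) evaluated at $s=\mathcal{S}(x)$, $t=\mathcal{S}(y)$, I get
\[
|J_{\mathcal{S}^{-1}}(s)|^{1/p}\,|J_{\mathcal{S}^{-1}}(t)|^{1/p}\,|s-t|^{\frac{2n}{p}} \;=\; |x-y|^{\frac{2n}{p}},
\]
so that $F(s)G(t)|s-t|^{2n/p} = f(x)g(y)|x-y|^{2n/p}$ pointwise along the bijection $x\mapsto\mathcal{S}(x)$.

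Next, because $\mathcal{S}:\mathbb{R}^{n}\to\mathbb{S}^{n}\setminus\{N\}$ is a diffeomorphism and the omitted north pole is a single point (surface-measure zero, and its preimage under any reasonable extension is likewise negligible), taking essential suprema on both sides yields the first asserted equality. For the norm identity, I change variables $s=\mathcal{S}(x)$ in $\int_{\mathbb{S}^{n}}|F(s)|^{p}\,d\sigma(s)$: the surface element transforms as $d\sigma(s)=|J_{\mathcal{S}}(x)|\,dx = |J_{\mathcal{S}^{-1}}(\mathcal{S}(x))|^{-1}dx$, while the definition (4.7) raised to the $p$-th power contributes precisely the factor $|J_{\mathcal{S}^{-1}}(\mathcal{S}(x))|$. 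The two cancel, leaving $\int_{\mathbb{R}^{n}}|f(x)|^{p}\,dx$, and the same calculation handles $G$.

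The work here is almost entirely mechanical; the only step that takes real care is verifying the chord-length identity and lining up the exponents so that the Jacobian weights cancel against $|s-t|^{2n/p}$ without leaving a residual factor. This is precisely the ``conformal'' choice of exponent in the definition of $\mathcal{S}^{\ast}$: the weight $|J_{\mathcal{S}^{-1}}|^{1/p}$ is rigged so that the combination $f(x)g(y)|x-y|^{2n/p}$ is invariant, and no other exponent would make both identities hold simultaneously. Once these cancellations are checked the lemma follows with no further difficulty.
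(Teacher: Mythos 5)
Your proposal is correct and follows essentially the same route as the paper: the chord-length identity $|s-t|^2=4|x-y|^2/((1+|x|^2)(1+|y|^2))$ combined with the Jacobian formula (4.8) gives the pointwise invariance $F(s)G(t)|s-t|^{2n/p}=f(x)g(y)|x-y|^{2n/p}$, and the $L^p$ identity is the same change-of-variables cancellation. The only difference is that you verify the chord formula and note the negligibility of the north pole explicitly, which the paper simply takes for granted.
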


\begin{proof}
By the stereographic projection $\mathcal{S}$ ,
we have (4.8)
$$| J_{\mathcal{S}^{-1}}(s)|=( \frac{1+|x|^{2}}{2})^{n},$$
and let $x=\mathcal{S}^{-1} (s), y=\mathcal{S}^{-1}(t)$, then
$$|x-y|=|s-t| (\frac{1+|x|^{2}}{2})^{1/2} (\frac{1+|y|^{2}}{2})^{1/2}= |J_{\mathcal{S}^{-1}}(s)|^{\frac{1}{2n}}|J_{\mathcal{S}^{-1}}(t)|^{\frac{1}{2n}} |s-t|.$$
So
\begin{align*}
\displaystyle{\sup_{s,t\in \mathbb{S}^{n}}} F(s) G(t) |s-t|^{\frac{2n}{p}}&= \displaystyle{\sup_{s,t\in \mathbb{S}^{n}}} |J_{\mathcal{S}^{-1}}(s)|^{1/p}f(\mathcal{S}^{-1}(s))
|J_{\mathcal{S}^{-1}}(t)|^{1/p}g(\mathcal{S}^{-1}(t)) |s-t|^{\frac{2n}{p}}  \\
&=\displaystyle{\sup_{x,y\in \mathbb{R}^{n}}}  f(x) g(y) |x-y|^{\frac{2n}{p}}.
\end{align*}
The invariance of $L^{p}$ norm can be obtained as follows,
$$\|f\|_{L^{p}(\mathbb{R}^{n})} = (\int_{\mathbb{R}^{n}} |f(x)|^{p} dx)^{1/p}
= (\int_{\mathbb{S}^{n}} |f(\mathcal{S}^{-1}(s))|^{p} |J_{\mathcal{S}^{-1}}(s)| ds)^{1/p}
= (\int_{\mathbb{S}^{n}} |F(s)|^{p} ds)^{1/p}.$$
Applying a similar argument implies $\|g\|_{L^{p}(\mathbb{R}^{n})}=\|G\|_{L^{p}(\mathbb{S}^{n})}$.

\end{proof}

Now we turn to study the sharp  case of inequality (4.1).

\bigskip

\hspace{-13pt}{\it Proof of Theorem 4.1}\quad

For $f\in L^{p}(\mathbb{R}^{n})$,  consider a  rotation $D: \mathbb{S}^{n} \rightarrow \mathbb{S}^{n}$ with
$$D(s)=(s_{1}, \dots,s_{n-1}, s_{n+1}, -s_{n}).$$
Specifically,  it is a rotation of the sphere by $90^{\circ}$ which keeps the other basis vectors fixed except $n$-th and $(n+1)$-th vectors in the direction of mapping
the  $(n+1)$-th vector $e_{n+1}= (0, \dots,0,1) $ to $n$-th vector $e_{n}=(0, \dots,0,1,0)$. \\
Define $(D^{\ast}F)(s)= |J_{D^{-1}}(s)|^{\frac{1}{p}} F(D^{-1}(s)) = F(D^{-1}(s)) $ for any $F \in L^{p}(\mathbb{S}^{n})$.
Then
$$\|  D^{\ast} F \|_{p}= \| F \|_{p},$$
which shows $D^{\ast}$ is norm preserving.

We consider the new function $(\mathcal{S}^{\ast})^{-1} D^{\ast} \mathcal{S}^{\ast} f$, where
$(\mathcal{S}^{\ast}f)(s)$  is the same as $ (4.7)$.
Denote $ (\mathcal{S}^{\ast}f)(s)$  by $ F(s)$, and let $x= \mathcal{S}^{-1} (s)$.
 From the discussion above, we have already shown
 $$F(s)= (\frac{1+|x|^{2}}{2})^{\frac{n}{p}} f(x).$$
The definition of $D$ and $ \mathcal{S}$ implies
$$D^{-1} (s)=(s_{1}, \dots,s_{n-1}, -s_{n+1}, s_{n})=( \frac{2x_{1}}{1+|x|^{2}}, \dots, \frac{2x_{n}}{1+|x|^{2}}, \frac{|x|^{2}-1}{1+|x|^{2}}, \frac{2x_{n}}{1+|x|^{2}} ).$$
Then
$$ (D^{\ast} \mathcal{S}^{\ast} f)(s)= (D^{\ast}F)(s)= F(D^{-1}(s))= (\frac{1+|x|^{2}}{|x+e_{n}|})^{\frac{n}{p}} f(  \frac{2x_{1}}{|x+e_{n}| ^{2}}, \dots,  \frac{2x_{n-1}}{|x+e_{n}|^{2}},  \frac{|x|^{2}-1}{|x+e_{n}|^{2}} ), $$
this is because  $$\mathcal{S}^{-1} (D^{-1} (s)) = ( \frac{2x_{1}}{|x+e_{n}| ^{2}}, \dots,  \frac{2x_{n-1}}{|x+e_{n}|^{2}},  \frac{|x|^{2}-1}{|x+e_{n}|^{2}} ).$$
Finally we find
\begin{align*}
( \mathcal{S}^{\ast-1} D^{\ast} \mathcal{S}^{\ast}f ) (x) &=  ( \frac{1+|x|^{2}}{2})^{-\frac{n}{p}} F(D^{-1}(s)) \\
&= (\frac{2}{|x+e_{n}|})^{\frac{n}{p}}  f(  \frac{2x_{1}}{|x+e_{n}| ^{2}}, \dots,  \frac{2x_{n-1}}{|x+e_{n}|^{2}},  \frac{|x|^{2}-1}{|x+e_{n}|^{2}} ).
\end{align*}
Briefly speaking,  we lift $f$ to the sphere by (4.7) first, then rotate it by $90^{\circ}$ in a specific direction which maps the north pole $e_{n+1}$ to $ e_{n}$, lastly push back to $\mathbb{R}^{n}$.
For simplicity we denote $\mathcal{S}^{\ast -1} D^{\ast} \mathcal{S}^{\ast}f$ by $\mathcal{D}f$.

\bigskip

Let $f\in L^{p}(\mathbb{R}^{n})$. Applying the transformation $\mathcal{D}$ and the symmetric rearrangement to $f$ many times gives the sequence $\{f_{k}\}_{k \in \mathbb{N}}$.
Specifically, $f_{0}=f$, $f_{k} =(\mathcal{R}\mathcal{D})^{k}f$ .   Note that both $\mathcal{D}$ and $\mathcal{R}$ are norm-preserving.
This is because  Lemma 4.3 implies
$$ \| \mathcal{S}^{\ast -1} D^{\ast} \mathcal{S}^{\ast}f \|_{p} = \|   D^{\ast} \mathcal{S}^{\ast}f  \|_{p}.$$
Due to the  norm preserving property of $ D^{\ast}$, we have
$$ \|   D^{\ast} \mathcal{S}^{\ast}f  \|_{p} = \|   \mathcal{S}^{\ast}f  \|_{p}$$
So apply  Lemma 4.3 again to get
$$ \| \mathcal{S}^{\ast -1} D^{\ast} \mathcal{S}^{\ast}f \|_{p} = \|   \mathcal{S}^{\ast}f  \|_{p}= \|   f  \|_{p}.$$
 It follows from Theorem 4.6 in Lieb-Loss  \cite{Lieb-Loss}  that  for all $ f \in L^{p}(\mathbb{R}^{n})$, the sequence
 $f_{k}$ converges to $h_{f}$  in $L^{p}$ norm as $k \rightarrow \infty$.
Here
$$ h_{f}= c  \ h, \ \  h(x)=(1+|x|^{2})^{-\frac{n}{p}}$$
and $c$ is the constant such that $\|f\|_{p} =  \|h_{f}\|_{p}$,
so  the constant $c$ is
$$c=2^{\frac{n}{p}} |\mathbb{S}^{n}|^{-1/p}   \|f\|_{p} ,$$
where $|\mathbb{S}^{n}|$ means the area of unit sphere in $\mathbb{R}^{n+1}$.

Since $f_{k}$  converges to $ h_{f}$  in $L^{p}$ norm for all f $ \in L^{p}(\mathbb{R}^{n})$,
 there exist subsequences $\{ f_{k_{l}} \},  \{ g_{k_{l}} \}$ such that
$ f_{k_{l}} \rightarrow  h_{f}$ and  $ g_{k_{l}} \rightarrow  h_{g}$ pointwise almost everywhere as $l  \rightarrow \infty$.
Clearly, Lemma 4.2, Lemma 4.3 and the rearrangement property
\begin{center}
$(f^{p})^{\ast}= (f^{\ast})^{p}$, \ \ for \ $0<p<\infty$
\end{center}
indicate that
 $ \displaystyle{\sup_{x,y}}  f_{k}(x)  g_{k}(y) |x-y|^{\frac{2n}{p}}  $  decreases monotonically  as $k$ grows.
Hence for all  $ x, y, k_{l}$
$$  f_{k_{l}}(x)  g_{k_{l}}(y) |x-y|^{\frac{2n}{p}}  \leq  \displaystyle{\sup_{x,y}} f(x) g(y)  |x-y|^{\frac{2n}{p}} < \infty.$$
Together with the dominated convergence theorem  it follows that
$$f_{k_{l}}(x)  g_{k_{l}}(y) |x-y|^{\frac{2n}{p}}  \xrightarrow{weak^* }   h_{f}(x)  h_{g}(y)  |x-y|^{\frac{2n}{p}} $$
 in $ L^{\infty}(\mathbb{R}^{n} \times  \mathbb{R}^{n}  )$  as $l  \rightarrow \infty$. \\
Hence  by the $\mathrm{weak}^*$ lower semicontinuity of the $L^{\infty}$ norm we have
\begin{align*}
\displaystyle{\sup_{x,y}} \ h_{f}(x)  h_{g}(y)  |x-y|^{\frac{2n}{p}}
&\leq \displaystyle{\liminf_{l}} \  (\displaystyle{\sup_{x,y}} \ f_{k_{l}}(x)  g_{k_{l}}(y) |x-y|^{\frac{2n}{p}} ) \\
&=  \displaystyle{\inf_{l}} \  (\displaystyle{\sup_{x,y}}  \ f_{k_{l}}(x)  g_{k_{l}}(y) |x-y|^{\frac{2n}{p}} ).
\end{align*}
Therefore  for every $f, g \in L^{p}(\mathbb{R}^{n})$ and every $k_{l}$
$$ \frac{ \displaystyle{\sup_{x,y}} \  f(x) g(y)  |x-y|^{\frac{2n}{p}} }{  \|f \|_{p}    \|g\|_{p}  }   \geq   \frac{  \displaystyle{\sup_{x,y}} \ f_{k_{l}}(x)  g_{k_{l}}(y) |x-y|^{\frac{2n}{p}} }{ \|f_{k_{l}} \|_{p}    \|g_{k_{l}}\|_{p}}
     \geq  \frac{ \displaystyle{\sup_{x,y}} \  h_{f}(x)  h_{g}(y)  |x-y|^{\frac{2n}{p}}      }{ \|h_{f} \|_{p}    \|h_{g}\|_{p}},$$
because of the norm-preserving  of $\mathcal{R}\mathcal{D}$.\\
Obviously,
$$\frac{ \displaystyle{\sup_{x,y}} \  h_{f}(x)  h_{g}(y)  |x-y|^{\frac{2n}{p}}      }{ \|h_{f} \|_{p}    \|h_{g}\|_{p}}
= \frac{ \displaystyle{\sup_{x,y}} \  h(x)  h(y)  |x-y|^{\frac{2n}{p}}      }{ \|h\|_{p}^{2}  }.$$

$\hfill\Box$

\medskip

Therefore, the conformally invariant property of (4.1) implies that  if $f$ and $g$ are the same conformal transformation of $h$, equality still holds.
However,  here we can not characterise  the optimisers.

\bigskip

From the sharp version for $\mathbb{R}^{n}$ case in Theorem 4.1
together with (4.7), (4.8) and the conformally invariant property in Lemma 4.3,
it follows that the geometric inequality (4.1) has conformally equivalent
form on the unit sphere $\mathbb{S}^{n}$ as follows.

\medskip

\begin{theorem}
For $0 < p < \infty$, let $F, G$ be nonnegative functions in $L^{p}(\mathbb{S}^{n})$. Then
$$\|F\|_{L^{p}(\mathbb{S}^{n})} \ \|G\|_{L^{p}(\mathbb{S}^{n})} \leq B_{p,n} \
\displaystyle{\sup_{s,t\in \mathbb{S}^{n}}}  \  F(s) G(t) \ |s-t|^{\frac{2n}{p}}.   \eqno(4.9)$$
The best constant $B_{p,n}$  is  obtained for
$F$,  $G$ are constant functions,
and the corresponding $B_{p,n} = 2^{-\frac{2n}{p}} |\mathbb{S}^{n}|^{\frac{2}{p}}$.

\end{theorem}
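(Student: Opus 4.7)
The plan is to deduce Theorem 4.4 directly from Theorem 4.1 together with the conformal invariance established in Lemma 4.3, so essentially no new analytic work is required; the task reduces to transplanting functions between $\mathbb{R}^n$ and $\mathbb{S}^n$ and then identifying what the extremiser $h(x)=(1+|x|^2)^{-n/p}$ becomes on the sphere.

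First I would note that the map $\mathcal{S}^{\ast}$ defined in (4.7) is a bijection between nonnegative functions in $L^p(\mathbb{R}^n)$ and nonnegative functions in $L^p(\mathbb{S}^n)$. Given $F,G\in L^p(\mathbb{S}^n)$, set $f:=(\mathcal{S}^{\ast})^{-1}F$ and $g:=(\mathcal{S}^{\ast})^{-1}G$. By Lemma 4.3,
\[
\|F\|_{L^p(\mathbb{S}^n)}\|G\|_{L^p(\mathbb{S}^n)} = \|f\|_{L^p(\mathbb{R}^n)}\|g\|_{L^p(\mathbb{R}^n)},
\]
and
\[
\sup_{s,t\in\mathbb{S}^n} F(s)G(t)|s-t|^{\frac{2n}{p}} = \sup_{x,y\in\mathbb{R}^n} f(x)g(y)|x-y|^{\frac{2n}{p}}.
\]
Thus the sphere inequality (4.9) is literally the same inequality as (4.1), with the same optimal constant. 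In particular (4.9) holds with $B_{p,n}$ equal to the sharp constant of Theorem 4.1.

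Second, I would identify the extremisers on $\mathbb{S}^n$. Theorem 4.1 says equality in (4.1) is achieved at $f=g=h$ with $h(x)=(1+|x|^2)^{-n/p}$. Using (4.8),
\[
(\mathcal{S}^{\ast}h)(s) = |J_{\mathcal{S}^{-1}}(s)|^{1/p}\,h(\mathcal{S}^{-1}(s)) = \left(\tfrac{1+|x|^2}{2}\right)^{\frac{n}{p}}\!(1+|x|^2)^{-\frac{n}{p}} = 2^{-\frac{n}{p}},
\]
a constant on $\mathbb{S}^n$. Hence the extremisers of (4.9) are the constant functions $F,G\equiv c$.

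Third, I would compute the sharp constant by plugging $F=G\equiv c$ into (4.9). The left side is $c^2|\mathbb{S}^n|^{2/p}$. For the right side, the diameter of $\mathbb{S}^n$ (as a subset of $\mathbb{R}^{n+1}$) is $2$, so
\[
\sup_{s,t\in\mathbb{S}^n} c\cdot c\cdot |s-t|^{\frac{2n}{p}} = c^2 \cdot 2^{\frac{2n}{p}}.
\]
Equating the ratio gives $B_{p,n}=2^{-\frac{2n}{p}}|\mathbb{S}^n|^{\frac{2}{p}}$, as claimed.

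There is no real obstacle here: the only thing that could have gone wrong is a mismatch between the sharp constant computed on $\mathbb{R}^n$ (via Theorem 4.1) and the one computed on $\mathbb{S}^n$ (via constants), but the calculation above of $\mathcal{S}^{\ast}h$ confirms consistency, since constants on $\mathbb{S}^n$ are exactly the stereographic images of $h$, up to multiplicative constants. The mildest care is needed in checking that $\mathcal{S}^{\ast}$ is surjective onto nonnegative $L^p(\mathbb{S}^n)$ (it is, via $(\mathcal{S}^{\ast})^{-1}F(x)=((1+|x|^2)/2)^{-n/p}F(\mathcal{S}(x))$), so that the reduction of (4.9) to (4.1) is genuinely two-way.
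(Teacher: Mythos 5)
Your proposal is correct and follows essentially the same route as the paper: Theorem 4.4 is obtained there precisely by transferring Theorem 4.1 to the sphere via (4.7), (4.8) and the conformal invariance of Lemma 4.3, with the value $B_{p,n}=2^{-\frac{2n}{p}}|\mathbb{S}^{n}|^{\frac{2}{p}}$ read off by noting that $\mathcal{S}^{\ast}h$ is constant and that the diameter of $\mathbb{S}^{n}$ is $2$. The only caution is that you should phrase the conclusion as ``constants attain the best constant'' rather than ``the extremisers are the constants,'' since neither Theorem 4.1 nor the paper characterises all optimisers.
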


\bigskip

Meanwhile, let $\mathbb{H}^{n}$ be the hyperbolic space in $\mathbb{R}^{n+1}$:
$$\mathbb{H}^{n}= \{q=(q_{1}, \dots,q_{n}, q_{n+1}) \in \mathbb{R}^{n}\times \mathbb{R}: q_{1}^{2}
+ \dots+ q_{n}^{2}- q_{n+1}^{2}= -1 )\},$$
with the Lorenz group $O(1,n)$ invariant measure $d \nu (q)$.
We find the geometric inequality (4.1) also has the conformally equivalent form in
$\mathbb{H}^{n}$ space as shown in the following theorem.

\medskip

\begin{theorem}
For $0 < p < \infty$, let $F, G$  be nonnegative functions in  $L^{p}(\mathbb{H}^{n})$. Then
$$\|F\|_{L^{p}(\mathbb{H}^{n})} \ \|G\|_{L^{p}(\mathbb{H}^{n})}
\leq E_{p, n} \  \displaystyle{\sup_{q,t}}  \ F(q) G(t) \ |qt-1|^{\frac{n}{p}},  \eqno(4.10)$$
$qt= -q_{1}t_{1}- \dots-q_{n}t_{n}+q_{n+1}t_{n+1}$.
The best constant $E_{p, n}$  is  obtained when
$F= const \cdot H$, $G= const \cdot H$, where
$$H(q)= |q_{n+1}|^{-\frac{n}{p}}, \ q=(q_{1}, \dots,q_{n}, q_{n+1}) .$$

\end{theorem}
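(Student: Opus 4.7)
\textit{Proof proposal.} The plan is to derive Theorem 4.6 from Theorem 4.1 via a conformal correspondence between $B^n\subset\mathbb{R}^n$ and $\mathbb{H}^n$, paralleling how Theorem 4.4 is obtained from Theorem 4.1 via stereographic projection. The central ingredient is the standard isometry $\Psi\colon B^n\to\mathbb{H}^n$ between the Poincar\'e ball model and the hyperboloid model,
$$\Psi(x)=\left(\frac{2x}{1-|x|^2},\ \frac{1+|x|^2}{1-|x|^2}\right).$$
For $q=\Psi(x)$, $t=\Psi(y)$, direct computation yields the two key identities
$$qt-1=\frac{2|x-y|^2}{(1-|x|^2)(1-|y|^2)},\qquad \Psi^*d\nu=\left(\tfrac{2}{1-|x|^2}\right)^n dx,$$
where the first uses $q_{n+1}=\tfrac{1+|x|^2}{1-|x|^2}$ and expanding $qt$, and the second follows from changing variables from the hyperboloid parametrization $q=(y,\sqrt{1+|y|^2})$ back to $x$.

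In analogy with (4.7), I would define the pullback $(\Psi^*F)(x):=\bigl(\tfrac{2}{1-|x|^2}\bigr)^{n/p}F(\Psi(x))$, extended by zero to $\mathbb{R}^n\setminus B^n$. The measure identity gives $\|\Psi^*F\|_{L^p(\mathbb{R}^n)}=\|F\|_{L^p(\mathbb{H}^n)}$, and combining the distance identity with the pullback definition yields
$$F(q)G(t)|qt-1|^{n/p}=2^{-n/p}(\Psi^*F)(x)(\Psi^*G)(y)|x-y|^{2n/p}.$$
Taking suprema on both sides and applying Theorem 4.1 to $\Psi^*F,\Psi^*G$ then establishes (4.10) with an explicit finite constant $E_{p,n}$. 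To locate the extremizer, substitute $H(q)=q_{n+1}^{-n/p}$ into the pullback to obtain
$$(\Psi^*H)(x)=\left(\tfrac{2}{1-|x|^2}\right)^{n/p}\left(\tfrac{1+|x|^2}{1-|x|^2}\right)^{-n/p}=2^{n/p}(1+|x|^2)^{-n/p}\chi_{B^n}(x)=2^{n/p}h(x)\chi_{B^n}(x),$$
so under $\Psi^*$ the function $H$ corresponds (up to a constant) to the $\mathbb{R}^n$-extremizer $h$ of Theorem 4.1 restricted to $B^n$. Combined with the conformal invariance of (4.1) remarked upon immediately after its proof, this identifies $F=G=\mathrm{const}\cdot H$ as the extremizer for (4.10).

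The principal difficulty is that $\Psi$ only covers the ball $B^n$, not all of $\mathbb{R}^n$: the pulled-back functions are compactly supported and a naive application of Theorem 4.1 to them does not automatically produce the sharp constant. Sharpness must therefore be verified separately, by computing $\|H\|_{L^p(\mathbb{H}^n)}^p=|\mathbb{S}^n|/2$ and $\sup_{q,t}H(q)H(t)|qt-1|^{n/p}=2^{n/p}$ directly (the latter approached as the hyperbolic distance $d(q,t)\to\infty$ with antipodal radial directions), and using the conformal-orbit argument above to certify that $H$ indeed attains $E_{p,n}$.
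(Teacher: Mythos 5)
Your computational identities are all correct: $qt-1=2|x-y|^2/((1-|x|^2)(1-|y|^2))$, the measure identity, and the resulting transfer of the supremum and of the $L^p$ norms, and these do yield inequality (4.10) with some finite constant. The gap is in the sharpness/optimizer claim, and it comes from using the map $\Psi\colon B^n\to\mathbb{H}^n_+$ onto the upper sheet only. In the paper, $\mathbb{H}^n$ is the full two-sheeted quadric $\{q_1^2+\dots+q_n^2-q_{n+1}^2=-1\}$ (which is why $H(q)=|q_{n+1}|^{-n/p}$ carries an absolute value), and the projection $\mathcal{H}$ is defined on all of $\mathbb{R}^n\setminus\{|x|=1\}$, sending $\{|x|<1\}$ to the upper sheet and $\{|x|>1\}$ to the lower sheet. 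That correspondence is an almost-everywhere bijection, so the full problem (4.1) transfers globally: the extremizer $h(x)=(1+|x|^2)^{-n/p}$ of Theorem 4.1 pulls over exactly to $c\,H$, and both the inequality and the optimality of $H$ follow immediately; the ``principal difficulty'' you identify simply does not arise in the paper's setting.

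Within your single-sheet reading the difficulty is real, and your proposed remedy does not close it. Computing $\|H\|_{L^p(\mathbb{H}^n)}^p=|\mathbb{S}^n|/2$ and $\sup_{q,t}H(q)H(t)|qt-1|^{n/p}=2^{n/p}$ only evaluates the functional at $H$; it does not show that no other pair $F,G$ on the upper sheet does better. Under your correspondence, that optimality statement is equivalent to saying that among functions supported in $\overline{B^n}$ the pair $f=g=h\chi_{B^n}$ maximizes $\|f\|_p\|g\|_p\big/\sup_{x,y}f(x)g(y)|x-y|^{2n/p}$, which Theorem 4.1 does not give: its extremizer $h$ and all of its conformal images have full support, so the conformal-orbit argument produces no admissible competitor supported in the ball, and the competing-symmetries machinery does not preserve ball support either. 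The paper itself hits exactly this obstruction in the multilinear hyperbolic case (Theorem 4.11), where only the disk-to-upper-sheet map is available, and for that reason it explicitly declines to identify the best constant or the optimizers there. So either switch to the two-sheeted correspondence as the paper does, or you must supply a genuinely new argument for the ball-restricted sharp problem; as written, the sharpness part of your proof is not established.
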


\begin{proof}
Consider the stereographic projection $\mathcal{H}$ which is conformal transformation
from $\mathbb{R}^{n} \backslash \{|x|=1\}$ to $\mathbb{H}^{n}$ as
$$\mathcal{H}(x)=(\frac{2x_{1}}{1-|x|^{2}}, \dots,\frac{2x_{n}}{1-|x|^{2}}, \frac{1+|x|^{2}}{1-|x|^{2}}),$$
so
$$\mathcal{H}^{-1}(q)= (\frac{q_{1}}{1+q_{n+1}}, \dots,\frac{q_{n}}{1+q_{n+1}}).$$
The Jacobian determinant of the map $\mathcal{H}^{-1}$ is
$$|J_{\mathcal{H}^{-1}}(q)|= (\frac{1- |\mathcal{H}^{-1}(q)|^{2}}{2})^{n}.$$
Let $x=\mathcal{H}^{-1}(q), y=\mathcal{H}^{-1}(t)$,
then we have
$$|x-y|= (\frac{|1-|x|^{2}|}{2})^{1/2} (\frac{|1-|y|^{2}|}{2})^{1/2} |qt-1|^{1/2}=|J_{\mathcal{H}^{-1}}(q)|^{\frac{1}{2n}}|J_{\mathcal{H}^{-1}}(t)|^{\frac{1}{2n}} |qt-1|^{1/2},$$
where
$qt= -q_{1}t_{1}- \dots -q_{n}t_{n}+q_{n+1}t_{n+1}$. \\
Define
$$F(q):= |J_{\mathcal{H}^{-1}}(q)|^{1/p} f(\mathcal{H}^{-1}(q)),  \ G(t):= |J_{\mathcal{H}^{-1}}(t)|^{1/p} g(\mathcal{H}^{-1}(t)).$$
Thus from the above, we easily get the conformal invariance as follows.
\begin{align*}
\displaystyle{\sup_{q,t}}  \ F(q) G(t) |qt-1|^{\frac{n}{p}}
&=\displaystyle{\sup_{q,t}} \ |J_{\mathcal{H}^{-1}}(q)|^{1/p} f(\mathcal{H}^{-1}(q)) |J_{\mathcal{H}^{-1}}(t)|^{1/p} g(\mathcal{H}^{-1}(t))|qt-1|^{\frac{n}{p}} \\
&=\displaystyle{\sup_{x,y}} \  f(x) g(y) |x-y|^{\frac{2n}{p}},
\end{align*}
and
$$\|f\|_{L^{p}(\mathbb{R}^{n})}=(\int_{\mathbb{R}^{n}} |f(x)|^{p} dx)^{1/p}
=(\int_{\mathbb{H}^{n}} |f(\mathcal{H}^{-1}(q))|^{p} |J_{\mathcal{H}^{-1}}(q)| dq)^{1/p}
=(\int_{\mathbb{H}^{n}} |F(q)|^{p} dq)^{1/p} .$$
Applying a similar argument gives  $\|g\|_{L^{p}(\mathbb{R}^{n})}=\|G\|_{L^{p}(\mathbb{H}^{n})}$.

When $f(x)=c (1+|x|^{2})^{-\frac{n}{p}}$, $F(q)= c \ (\frac{1-|x|^{2}}{1+|x|^{2}})^{\frac{n}{p}}
= c \ |q_{n+1}|^{-\frac{n}{p}}$.
Hence the conformally equivalent form  (4.10) follows from Theorem 4.1.

\end{proof}

\bigskip

\hspace{-13pt}{\it 2.  Sharp constant for  multilinear geometric inequality}\quad

\begin{theorem}
Let $0<p< \infty$ and $f_{j}$  be in  $ L^{p}(\mathbb{R}^{n})$.
For multilinear geometric  inequality
$$  \displaystyle{\prod_{j=1}^{n+1}} \|f_{j}\|_{L^{p}(\mathbb{R}^{n})} \leq C_{p,n} \
\displaystyle{\sup_{y_{j}}} \ \displaystyle{\prod_{j=1}^{n+1}} \ f_{j}(y_{j}) \det(y_{1}, \dots,y_{n+1})^{\gamma}  \eqno(4.11)   $$
with $ \gamma =\frac{n+1}{p}$,
the minimum constant is obtained when $ f_{j}= const \cdot h$, $1 \leq j \leq n+1$,
where
$$h (x)=({1+|x|^{2}})^{-\frac{n+1}{2p}}.$$

\end{theorem}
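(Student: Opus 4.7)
The strategy parallels the proof of Theorem 4.1 closely. I plan to establish three ingredients: a multilinear rearrangement lemma for the right-hand side of (4.11), a conformal invariance property under stereographic projection to $\mathbb{S}^n$, and an iteration converging coordinate-wise in $L^p$ to $h$. Once these are in place, the conclusion follows by the same weak-$*$ lower semicontinuity argument used for Theorem 4.1.

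\emph{Step 1 (rearrangement).} First I would prove the multilinear analogue of Lemma 4.2, namely
$$\sup_{y_j} \prod_{j=1}^{n+1} f_j^{*}(y_j)\,\det(y_1,\ldots,y_{n+1})^{\gamma} \leq \sup_{y_j} \prod_{j=1}^{n+1} f_j(y_j)\,\det(y_1,\ldots,y_{n+1})^{\gamma},$$
by passing through the Steiner symmetrizations $\mathcal{R}_k$, $k=1,\ldots,n$. At each stage the problem reduces to showing that Steiner symmetrization in one coordinate direction does not increase the supremum. The core estimate is a one-dimensional Brunn--Minkowski-type bound for simplex volumes analogous to the use of $|E-F|^{1/n}\geq|E|^{1/n}+|F|^{1/n}$ in Lemma 4.2, now applied to the $(n+1)$ sets obtained by slicing the level sets of the $f_j$ along the symmetrized coordinate.

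\emph{Step 2 (conformal invariance).} Setting $F_j(s)=|J_{\mathcal{S}^{-1}}(s)|^{1/p} f_j(\mathcal{S}^{-1}(s))$ as in (4.7), I would verify the transformation law
$$\det(\mathcal{S}^{-1}(s_1),\ldots,\mathcal{S}^{-1}(s_{n+1})) \,=\, V(s_1,\ldots,s_{n+1})\,\prod_{j=1}^{n+1} |J_{\mathcal{S}^{-1}}(s_j)|^{1/(n+1)}$$
for a suitable spherical simplex volume $V$; the $n=1$ case is the chord identity already used in Lemma 4.3. The exponent $\gamma=(n+1)/p$ is chosen precisely so that the $n+1$ factors $|J|^{1/p}$ absorbed into the $F_j$ cancel against $\det^{\gamma}$, making both sides of (4.11) transform in parallel under $\mathcal{S}^*$; in particular the ratio is invariant under the composition $\mathcal{D}=\mathcal{S}^{*-1}D^{*}\mathcal{S}^{*}$ built from the $90^{\circ}$-rotation $D$ of $\mathbb{S}^n$ introduced in the proof of Theorem 4.1.

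\emph{Step 3 (iteration and passage to the limit).} Define $f_{j,k}=(\mathcal{R}\mathcal{D})^{k}f_j$. Steps 1 and 2 together show that $\sup_{y_j}\prod_j f_{j,k}(y_j)\det(y_1,\ldots,y_{n+1})^{\gamma}$ is non-increasing in $k$, while $\|f_{j,k}\|_p=\|f_j\|_p$ is preserved. Theorem 4.6 of Lieb--Loss then yields $f_{j,k}\to c_j h$ in $L^p$ with $c_j=\|f_j\|_p/\|h\|_p$. Extracting an a.e.~pointwise convergent subsequence $f_{j,k_\ell}\to c_j h$, and using the $k=0$ bound as a uniform dominating function exactly as in the proof of Theorem 4.1, dominated convergence delivers weak-$*$ convergence of $\prod_j f_{j,k_\ell}(y_j)\det^{\gamma}$ to $\prod_j c_j h(y_j)\det^{\gamma}$ in $L^{\infty}(\mathbb{R}^{n(n+1)})$. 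Weak-$*$ lower semicontinuity of the $L^{\infty}$ norm, combined with the normalization $\|f_{j,k_\ell}\|_p=\|f_j\|_p=c_j\|h\|_p$, then gives the sharp inequality with the claimed extremizer. The main obstacle is Step 1: the bilinear rearrangement rested on Brunn--Minkowski for $E-F$, and the multilinear analogue for the simplex volume spanned by points drawn from $n+1$ sets requires careful formulation and verification. A secondary technical check is the determinantal transformation law of Step 2, which is the algebraic identity that forces the specific exponent $\gamma=(n+1)/p$ and underwrites the conformal invariance.
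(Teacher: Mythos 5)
Your overall architecture (rearrangement monotonicity, projection invariance, a competing-symmetries iteration, then weak-$*$ lower semicontinuity) is the same as the paper's, and your Step 1 ``core estimate'' is indeed what the paper proves: a one-dimensional statement $\sup_{x_j\in A_j^{*}}|\sum_j a_jx_j|\leq\sup_{x_j\in A_j}|\sum_j a_jx_j|$ (Lemma 4.7/4.8), applied coordinate-by-coordinate because for fixed remaining coordinates $\det(y_1,\dots,y_{n+1})$ is a linear form in the $i$-th coordinates of the $y_j$ (Corollary 4.9). But two of your steps contain genuine gaps. First, in Step 1 you cannot get the inequality for the full radial rearrangements $f_j^{*}$ by ``passing through the Steiner symmetrizations'': finitely many coordinate symmetrizations do not produce $f^{*}$. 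Fortunately the radial version is neither proved nor needed in the paper; only the coordinate-wise Steiner statement is used, and the iteration is built from the operators $\mathcal{R}_n\cdots\mathcal{R}_1$, not from $\mathcal{R}$.

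Second, and decisively, Step 2 fails for $n\geq 2$: the determinant functional is not conformally invariant under the stereographic projection (4.7) used for Theorem 4.1. There is no rotation-invariant spherical quantity $V$ with $\det(y_1,\dots,y_{n+1})=V(s_1,\dots,s_{n+1})\prod_j|J_{\mathcal{S}^{-1}}(s_j)|^{1/(n+1)}$ for that map (already for $n=2$: the Euclidean area is a non-monomial function of the side lengths, so the factors $\bigl(\tfrac{1+|y_j|^2}{2}\bigr)^{1/2}$ attached to each chordal distance cannot be pulled out as a single product), and consequently the sup functional is not preserved by the rotation $\mathcal{D}$ of Theorem 4.1, so your monotonicity in Step 3 breaks down. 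The correct transformation, used in the paper, is under the central (gnomonic) projection onto the hemisphere, $\mathcal{S}(x)=(x,1)/\sqrt{1+|x|^2}$ with $|J_{\mathcal{S}^{-1}}(s)|=(1+|\mathcal{S}^{-1}(s)|^{2})^{(n+1)/2}$, for which $\det(y_1,\dots,y_{n+1})=\prod_j(1+|y_j|^2)^{1/2}\,\det(s_1,\dots,s_{n+1})$ with $\det(s_1,\dots,s_{n+1})$ the $(n+1)\times(n+1)$ determinant of the unit vectors, which is rotation invariant. This also exposes an internal inconsistency in your Step 3: Lieb--Loss Theorem 4.6 would give $f_{j,k}\to c_j(1+|x|^2)^{-n/p}$, not the claimed extremizer $(1+|x|^2)^{-(n+1)/(2p)}$ (these agree only when $n=1$). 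The paper instead follows Valdimarsson: it iterates the coordinate Steiner symmetrizations together with rotations $U^{i}_{\alpha}$ by an irrational multiple of $\pi$ in the $(i,n+1)$-planes (conjugated by the hemisphere projection), and proves $L^{p}$ convergence to $c\,(1+|x|^2)^{-(n+1)/(2p)}$ directly via Helly's selection principle, non-expansivity of the symmetrizations, and the irrational-rotation rigidity argument, rather than citing Lieb--Loss Theorem 4.6. Your final weak-$*$ argument is fine, but only once these two steps are replaced.
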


Later we can see the sharp constant $C_{p,n}=(\frac{1}{2}|\mathbb{S}^{n}| )^{\frac{n+1}{p}}$,
where $|\mathbb{S}^{n}|$ is the surface area of the unit  sphere $\mathbb{S}^{n}$.

 \medskip

As before, it suffices to study the extremals for the case  when $1< p <\infty$.
Because for any $q \in (0, \infty)$, $p \in (1, \infty)$, we have
$$\displaystyle{\sup_{y_{j}}} \ \displaystyle{\prod_{j=1}^{n+1}} \ f_{j}(y_{j})^{\frac{p}{q}} \det(y_{1}, \dots,y_{n+1})^{\frac{n+1}{q}}
=(\displaystyle{\sup_{y_{j}}} \ \displaystyle{\prod_{j=1}^{n+1}} \ f_{j}(y_{j}) \det(y_{1}, \dots,y_{n+1})^{\frac{n+1}{p}})^{\frac{p}{q}}.$$
and for each $j$,
$$\|f_{j}^{\frac{p}{q}}\|_{L^{q}(\mathbb{R}^{n})}= ( \|f_{j}\|_{L^{p}(\mathbb{R}^{n})})^{\frac{p}{q}}$$
Thus if $ \{f_{j}\}$ are the extremal functions for $p \in (1, \infty)$, then
$\{f_{j}^{\frac{p}{q}}\}$ are the  extremal functions for any $q \in (0, \infty)$.

\bigskip

\begin{lemma}
Let $a_{j} \in \mathbb{R}$ and $A_{j}$ be sets in $\mathbb{R}$ with finite measure, $j=1, \dots, l$. Then
$$\displaystyle{\sup_{x_{j}\in A_{j}^{\ast} }}  \  |\sum_{j=1}^{l} a_{j}x_{j}|  \leq \displaystyle{\sup_{x_{j}\in A_{j} }}  \  |\sum_{j=1}^{l} a_{j}x_{j}|. \eqno(4.12)$$

\end{lemma}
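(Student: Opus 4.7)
The plan is to reduce (4.12) to a statement about Minkowski sums in $\mathbb{R}$ and then combine the one-dimensional Brunn--Minkowski inequality with the comparison $\sup_{x\in C}|x|\geq \sup_{x\in C^{\ast}}|x|$ already isolated inside the proof of Lemma 4.2. First I would observe that since each $A_j^{\ast}$ is symmetric about $0$, one has $(a_j A_j)^{\ast}=a_j A_j^{\ast}$ for every $a_j\in\mathbb{R}$; both sides are the symmetric interval of length $|a_j||A_j|$ centred at $0$. Setting $B_j:=a_j A_j$, the inequality (4.12) is therefore equivalent to
$$ \sup_{y_j\in B_j^{\ast}} \Bigl|\sum_{j=1}^{l} y_j\Bigr| \ \leq\ \sup_{y_j\in B_j} \Bigl|\sum_{j=1}^{l} y_j\Bigr|, $$
so we have reduced to the case $a_1=\dots=a_l=1$.

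Next I would reformulate both sides in terms of the Minkowski sums $S:=B_1+\cdots+B_l$ and $S_0:=B_1^{\ast}+\cdots+B_l^{\ast}$: the left-hand side equals $\sup_{z\in S_0}|z|$ and the right-hand side equals $\sup_{z\in S}|z|$. Since each $B_j^{\ast}$ is the symmetric interval $[-|B_j|/2,\,|B_j|/2]$, the sum $S_0$ is again a symmetric interval, and
$$ \sup_{z\in S_0}|z| = \tfrac{1}{2}\sum_{j=1}^{l} |B_j|. $$
By the one-dimensional Brunn--Minkowski inequality $|A+B|\geq |A|+|B|$ applied inductively, $|S|\geq \sum_{j=1}^{l}|B_j|$, so $S^{\ast}$ is a symmetric interval of length at least $2\sup_{z\in S_0}|z|$ and hence contains $S_0$; in particular $\sup_{z\in S^{\ast}}|z|\geq \sup_{z\in S_0}|z|$.

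To finish, I would invoke the fact extracted from the proof of Lemma 4.2 that $\sup_{z\in C}|z|\geq \sup_{z\in C^{\ast}}|z|$ for every measurable $C\subset\mathbb{R}$ of finite measure (the case $|S|=\infty$ is trivial, since then $S$ is unbounded and the right-hand side of (4.12) is already infinite). Chaining the inequalities,
$$ \sup_{z\in S}|z| \ \geq\ \sup_{z\in S^{\ast}}|z| \ \geq\ \sup_{z\in S_0}|z|, $$
which is (4.12). The main technical nuisance I anticipate is that in $\mathbb{R}$ the Minkowski sum of two Lebesgue measurable sets need not itself be measurable, so $|A+B|\geq|A|+|B|$ must be read via inner measure; to sidestep this, I would first approximate each $B_j$ from within by a compact set $K_j\subset B_j$ with $|K_j|>|B_j|-\varepsilon$, apply Brunn--Minkowski to the compact (hence measurable) sum $K_1+\cdots+K_l\subset S$, and let $\varepsilon\to 0$.
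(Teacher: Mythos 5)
Your proof is correct, and it rests on exactly the same two ingredients as the paper's: the one-dimensional Brunn--Minkowski inequality $|A+B|\geq |A|+|B|$, which (via equality of measures of symmetric intervals) yields $B_{1}^{\ast}+\cdots+B_{l}^{\ast}\subset (B_{1}+\cdots+B_{l})^{\ast}$, and the elementary comparison $\sup_{z\in C}|z|\geq \sup_{z\in C^{\ast}}|z|$ already isolated inside the proof of Lemma 4.2. The difference is organizational rather than conceptual: the paper argues pairwise, absorbing $aA^{\ast}+bB^{\ast}$ into $(aA+bB)^{\ast}$ and then invoking the two-set inequality from Lemma 4.2, with general $l$ handled by induction (only $l=3$ is written out); you instead treat all $l$ sets at once, observing that after the rescaling $B_{j}=a_{j}A_{j}$ the left-hand side is exactly $\frac{1}{2}\sum_{j}|B_{j}|$, bounding the Minkowski sum $S=B_{1}+\cdots+B_{l}$ from below by iterated Brunn--Minkowski, and finishing with $\sup_{z\in S}|z|\geq \sup_{z\in S^{\ast}}|z|$. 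Your global version dispenses with the induction and is, if anything, cleaner; you also flag, and repair by inner approximation with compact sets, the measurability of the Minkowski sum, a point the paper passes over silently (and which can equally be avoided by never forming $S^{\ast}$ at all, since $\sup_{z\in K_{1}+\cdots+K_{l}}|z|\geq \frac{1}{2}|K_{1}+\cdots+K_{l}|$ already suffices).

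Two negligible caveats. First, when some $a_{j}=0$ the identity $(a_{j}A_{j})^{\ast}=a_{j}A_{j}^{\ast}$ degenerates, since under the paper's convention $\{0\}^{\ast}=\emptyset$; such terms contribute nothing to the sum and should simply be dropped before rescaling. Second, the paper declares $\sup$ to be an essential supremum; your chain of inequalities is stated for the genuine supremum, but it adapts with the same measure-zero bookkeeping the paper itself uses in Lemma 4.2, so this does not affect the substance of the argument.
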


\begin{proof}
For simplicity we just show
$$\displaystyle{\sup_{x\in A^{\ast},  y\in B^{\ast},  z\in C^{\ast}}}|a x+ b y + c z|  \leq  \displaystyle{\sup_{x\in A, y\in B, z\in C}}|a x+ b y + c z|,\eqno(4.13)$$
where $a,b,c \in \mathbb{R}$, $A, B, C\subset  \mathbb{R}$ with finite measure.
Then (4.12) can be  obtained by induction on $l$.

The definition of symmetric rearrangement of set implies
$$ A^{\ast} = (-\frac{|A|}{2},   \frac{|A|}{2}), \  B^{\ast} = (-\frac{|B|}{2},   \frac{|B|}{2}),$$
so
$$A^{\ast}+B^{\ast}= (-\frac{|A|+|B|}{2}, \frac{|A|+|B|}{2}).$$
The Brunn-Minkowski inequality tells $|A+B| \geq |A|+|B|$,
then
$$  |(A+B)^{\ast}| = |A+B|  \geq |A|+|B| = |A^{\ast} +B^{\ast}|$$
which means
$$(A+B)^{\ast} \supset A^{\ast}+B^{\ast}.$$
Together with the property $(aA)^{\ast}=a A^{\ast}$  we have
\begin{align*}
\displaystyle{\sup_{x\in A^{\ast}, y\in B^{\ast}, z\in C^{\ast}}}|a x+ b y + c z|
&= \displaystyle{\sup_{\overline{x}\in a A^{\ast}, \overline{y}\in b B^{\ast}, \overline{z}\in c C^{\ast} } }  | \overline{x}+ \overline{y} + \overline{z} | \\
&=   \displaystyle{\sup_{\overline{m}\in a A^{\ast} + b B^{\ast}, \overline{z}\in c C^{\ast}} }| \overline{m}+ \overline{z} | \\
&\leq \displaystyle{\sup_{\overline{m}\in (aA+bB)^{\ast}, \overline{z}\in c C^{\ast}} }| \overline{m}+ \overline{z} |.
\end{align*}

In the proof of Lemma 4.2  we stated that
$$\displaystyle{\sup_{x\in A^{\ast}, y\in B^{\ast}}} | x + y | \leq \displaystyle{\sup_{x\in A, y\in B}} | x + y |.$$
Hence
\begin{align*}
\displaystyle{\sup_{\overline{m}\in (aA+bB)^{\ast}, \overline{z}\in c C^{\ast}}} | \overline{m}+ \overline{z} |
&\leq \displaystyle{\sup_{m\in aA+bB, z \in c C }}| m + z| \\
&= \displaystyle{\sup_{x\in A, y\in B, z\in C}} |a x+ b y + c z|
\end{align*}
which gives (4.13).

\end{proof}

\bigskip

\begin{lemma} [the general form of Lemma 4.2]
Let $f_{j}$ be defined on $\mathbb{R}$ and let  $ a_{j}$ be real numbers, then
$$\displaystyle{\sup_{x_{j} }} \prod_{j=1}^{l} f_{j}^{\ast}(x_{j})|\sum_{j}^{l} a_{j}x_{j}|  \leq
 \displaystyle{\sup_{x_{j} }}  \prod_{j=1}^{l} f_{j}(x_{j})|\sum_{j}^{l} a_{j}x_{j}|.  \eqno(4.14)$$

\end{lemma}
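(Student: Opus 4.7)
The plan is to mimic the contradiction argument used in the proof of Lemma 4.2, with Lemma 4.7 playing the role that the Brunn-Minkowski inequality played there. Assume $A := \sup_{x_j} \prod_{j=1}^l f_j(x_j) |\sum_j a_j x_j| < \infty$ (otherwise the inequality is trivial), and suppose for contradiction that
$$\sup_{x_j} \prod_{j=1}^l f_j^{\ast}(x_j) \Bigl|\sum_j a_j x_j\Bigr| > A.$$
Then there exist $\varepsilon > 0$ and a point $(s_1^0,\dots,s_l^0) \in \mathbb{R}^l$ (in a set of positive Lebesgue measure) such that $\prod_j f_j^{\ast}(s_j^0) \, L > A + \varepsilon$, where $L := |\sum_j a_j s_j^0|$; in particular $L > 0$ and each $f_j^{\ast}(s_j^0) > 0$.

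Next I would choose threshold levels $t_j \in (0, f_j^{\ast}(s_j^0))$ just below each $f_j^{\ast}(s_j^0)$ — by continuity of the product in $(t_1,\dots,t_l)$, they can be chosen so that $\prod_j t_j \cdot L > A$ still holds. Define the level sets
$$E_j := \{x \in \mathbb{R} : f_j(x) > t_j\}, \qquad j = 1,\dots,l.$$
Since $f_j \in L^p(\mathbb{R})$ and $t_j > 0$, each $E_j$ has finite measure. Using the equivalence recalled earlier in the paper, namely $f_j^{\ast}(s_j^0) > t_j$ iff $|\{x : f_j(x) > t_j\}| > 2|s_j^0|$, I conclude that $|E_j| > 2|s_j^0|$, which is exactly the statement $s_j^0 \in E_j^{\ast}$.

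Now I would invoke Lemma 4.7 applied to the sets $E_1,\dots,E_l$ to get
$$L = \Bigl|\sum_j a_j s_j^0\Bigr| \leq \sup_{x_j \in E_j^{\ast}} \Bigl|\sum_j a_j x_j\Bigr| \leq \sup_{x_j \in E_j} \Bigl|\sum_j a_j x_j\Bigr|.$$
Since $f_j(x_j) > t_j$ for (a.e.) $x_j \in E_j$, for such configurations
$$\prod_j f_j(x_j)\Bigl|\sum_j a_j x_j\Bigr| \geq \prod_j t_j \,\Bigl|\sum_j a_j x_j\Bigr|,$$
and taking the supremum over $(x_1,\dots,x_l) \in E_1 \times \cdots \times E_l$ yields
$$A \;\geq\; \sup_{x_j \in E_j} \prod_j f_j(x_j) \Bigl|\sum_j a_j x_j\Bigr| \;\geq\; \prod_j t_j \cdot L \;>\; A,$$
the desired contradiction.

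I expect the main technical obstacle to be the bookkeeping with the thresholds $t_j$: they must simultaneously lie strictly below $f_j^{\ast}(s_j^0)$ and yet keep $\prod_j t_j \cdot L$ strictly greater than $A$, and if some $f_j^{\ast}(s_j^0) = +\infty$ this step requires minor care (take $t_j$ finite but large). A secondary subtlety is that $\sup$ means essential supremum throughout the paper, so the initial choice of $(s_1^0,\dots,s_l^0)$ must be made from the positive-measure superlevel set; this is handled by the fact that the $f_j^\ast$ are pointwise-defined via their layer-cake formula, so every point of this set is legitimate. Once these points are managed, the remainder is a direct transcription of the Lemma 4.2 proof with Lemma 4.7 substituted for Brunn–Minkowski.
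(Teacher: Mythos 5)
Your proposal is correct and follows essentially the same route as the paper's proof: argue by contradiction, pass from the pointwise statement to superlevel sets via the equivalence $f^{\ast}(s)>t \iff |\{f>t\}|>2|s|$, and then invoke Lemma 4.7 on those sets to contradict the definition of $A$. The only difference is cosmetic: you choose all thresholds $t_{j}$ at once just below $f_{j}^{\ast}(s_{j}^{0})$ with $\prod_{j} t_{j}\,L>A$, whereas the paper selects the level sets iteratively with the $\varepsilon,\varepsilon/2,\varepsilon/3$ bookkeeping, so your version is a slightly streamlined transcription of the same argument.
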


\begin{proof}
For simplicity, we just see why
$$\displaystyle{\sup_{x, y, z}} \  f^{\ast}(x) g^{\ast}(y) h^{\ast}(z) |a x+ b y + c z| \leq  \displaystyle{\sup_{x, y, z} }  \ f(x)g(y)h(z)|a x+ b y + c z|;   \eqno(4.15)$$
$(4.14)$ can be obtained by induction  and similar arguments as the proof of $(4.15)$.
Suppose for a contradiction that
$$\displaystyle{\sup_{x, y, z }} f^{\ast}(x)g^{\ast}(y)h^{\ast}(z)|a x+ b y + c z| > \displaystyle{\sup_{x, y, z}}  f(x)g(y)h(z)|a x+ b y + c z| = \alpha,$$
which implies there exist positive $\varepsilon $ and a set $G \subset \mathbb{R} \times  \dots \times \mathbb{R}$  such that $|G| >0$, and
 $$f^{\ast}(\overline{x}) g^{\ast}(\overline{y}) h^{\ast}(\overline{z}) |a \overline{x}+ b \overline{y} + c \overline{z}| > \alpha +\varepsilon $$
for all $(\overline{x}, \overline{y}, \overline{z}) \in G$.
So
$$f^{\ast}(\overline{x}) > (\alpha +\varepsilon) (g^{\ast}(\overline{y})h^{\ast}(\overline{z})|a \overline{x}+ b \overline{y} + c \overline{z}|)^{-1}. $$

By the property of decreasing  rearrangement, we have
$$|E_{1}|\equiv | \{x: f(x) > (\alpha +\varepsilon) (g^{\ast}(\overline{y}) h^{\ast}(\overline{z})|a \overline{x}+ b \overline{y} + c \overline{z}|)^{-1} \} | > 2 | \overline{x}| .      \eqno(4.16)$$
Based on $(4.16)$, we find that $\overline{x} \in E_{1}^{\ast}$ and
\begin{center}
$ f(x)g^{\ast}(\overline{y})h^{\ast}(\overline{z})|a \overline{x}+ b \overline{y} + c \overline{z}| > \alpha +\varepsilon $  \ for all $x\in E_{1}$.
\end{center}
Then
$$g^{\ast}(\overline{y}) >  (\alpha + \frac{\varepsilon}{2}) (\displaystyle{\inf_{x\in E_{1}}}f(x) h^{\ast}(\overline{z}) |a \overline{x}+ b \overline{y} + c \overline{z}|)^{-1}.$$
Applying the property of symmetric rearrangement again we get
$$|E_{2}|\equiv | \{y: g(y) >(\alpha + \frac{\varepsilon}{2})  (\displaystyle{\inf_{x \in E_{1}}}f(x) h^{\ast}(\overline{z}) |a \overline{x}+ b \overline{y} + c \overline{z}|   )^{-1} \} | > 2 | \overline{y}| .  \eqno(4.17)$$
Based on it we have $\overline{y} \in E_{2}^{\ast}$  and for any $y \in E_{2}$,
$$\displaystyle{\inf_{x \in E_{1}}}f(x) g(y) h^{\ast}(\overline{z}) |a \overline{x}+ b \overline{y} + c \overline{z}|   >\alpha + \frac{\varepsilon}{2}.$$
Obviously,
$$ \displaystyle{\inf_{x \in E_{1}}}f(x)  \displaystyle{\inf_{y \in E_{2}}} g(y)  h^{\ast}(\overline{z}) |a \overline{x}+ b \overline{y} + c \overline{z}|   > \alpha + \frac{\varepsilon}{3}. $$
It follows from
 $$ h^{\ast}(\overline{z})  > (\alpha + \frac{\varepsilon}{3}) (\displaystyle{\inf_{x \in E_{1}}}f(x) \displaystyle{\inf_{y \in E_{2}}}g(y) |a \overline{x}+ b \overline{y} + c \overline{z}|  )^{-1}$$
and the property of decreasing rearrangement once more that
$$|E_{3}|\equiv | \{y: h(z) > (\alpha + \frac{\varepsilon}{3} ) (\displaystyle{\inf_{x \in E_{1}}}f(x) \displaystyle{\inf_{y \in E_{2}}}g(y) |a \overline{x}+ b \overline{y} + c \overline{z}|  )^{-1} \} | > 2 | \overline{z}| ,   \eqno(4.18)$$
so $\overline{z} \in E_{3}^{\ast}$ .
From (4.18)  we get  for any $x \in E_{1}$, $y \in E_{2}$, $z \in E_{3}$,
$$f(x)g(y)h(z) > (\alpha + \frac{\varepsilon}{3} )  (|a \overline{x}+ b \overline{y} + c \overline{z}|  )^{-1},$$
which implies
$$\displaystyle{\sup_{x\in E_{1}, y\in E_{2}, z\in E_{3}}} f(x)g(y)h(z) |a x+ b y + c z|  \geq  (\alpha + \frac{\varepsilon}{3} )  (|a \overline{x}+ b \overline{y} + c \overline{z}|  )^{-1} \displaystyle{\sup_{x\in E_{1}, y\in E_{2}, z\in E_{3}}} |a x+ b y + c z|.$$
Therefore,
\begin{align*}
\displaystyle{\sup_{x, y, z}}  f(x)g(y)h(z)|a x+ b y + c z|
 &\geq \displaystyle{\sup_{x\in E_{1}, y\in E_{2}, z\in E_{3}}} f(x)g(y)h(z)|a x+ b y + c z| \\
&\geq (\alpha + \frac{\varepsilon}{3} )  (|a \overline{x}+ b \overline{y} + c \overline{z}|)^{-1}   \displaystyle{\sup_{x\in E_{1}, y\in E_{2}, z\in E_{3}}} |a x+ b y + c z|.
\end{align*}

Lemma 4.7 gives that
$$ \displaystyle{\sup_{x\in E_{1}, y\in E_{2}, z\in E_{3}}} |a x+ b y + c z|  \geq \displaystyle{\sup_{x\in E_{1}^{\ast}, y\in E_{2}^{\ast}, z\in E_{3}^{\ast}} } |a x+ b y + c z|
\geq  |a \overline{x}+ b \overline{y} + c \overline{z}|,$$
since
$\overline{x} \in E_{1}^{\ast}$, $ \overline{y} \in E_{2}^{\ast}$,  $\overline{z} \in E_{3}^{\ast}$ .
Hence
$$(\alpha + \frac{\varepsilon}{3} )  (|a \overline{x}+ b \overline{y} + c \overline{z}|)^{-1}   \displaystyle{\sup_{x\in E_{1}, y\in E_{2}, z\in E_{3}}}|a x+ b y + c z|  \geq \alpha + \frac{\varepsilon}{3} >  \alpha , $$
which is a contradiction.  This completes the proof of Lemma 4.8.

\end{proof}

\bigskip

It follows from Lemma 4.8 and the rearrangement property
\begin{center}
$(f^{p})^{\ast}= (f^{\ast})^{p}$, \ \ for \ $0<p<\infty$
\end{center}
that  that we have the following corollary.

\begin{corollary}
Let $f_{j}$ be defined on $\mathbb{R}$, $j=1, \dots, n+1$. Then for any $1 \leq i \leq n$,
$$\displaystyle{\sup_{y_{j}}} \prod_{j=1}^{n+1} f_{j}^{\ast i}(y_{j})  \det( y_{1}, \dots, y_{n+1})^{\frac{n+1}{p}}
\leq   \displaystyle{\sup_{y_{j}}}  \prod_{j=1}^{n+1} f_{j}(y_{j})  \det(y_{1}, \dots, y_{n+1})^{\frac{n+1}{p}}.  \eqno(4.19)  $$
Furthermore,
$$\displaystyle{\sup_{y_{j}}} \prod_{j=1}^{n+1} \mathcal{R}_{n} \mathcal{R}_{n-1} \dots \mathcal{R}_{1} f_{j}(y_{j})  \det( y_{1}, \dots, y_{n+1})^{\frac{n+1}{p}}
\leq   \displaystyle{\sup_{y_{j}}}  \prod_{j=1}^{n+1} f_{j}(y_{j}) \det(y_{1}, \dots, y_{n+1})^{\frac{n+1}{p}}. $$
\end{corollary}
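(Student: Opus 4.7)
The plan is to reduce Corollary 4.9 to Lemma 4.8 by slicing in one coordinate direction at a time. Fix $1 \leq i \leq n$ and write each vertex as $y_j = (y_j^{(1)}, \ldots, y_j^{(n)})$. If all coordinates other than the $i$-th are held fixed, then expanding the $n \times n$ matrix $(y_1-y_{n+1}, \ldots, y_n-y_{n+1})$ along its $i$-th row produces constants $a_1, \ldots, a_{n+1}$ depending only on the frozen coordinates (with $a_{n+1} = -\sum_{j=1}^n a_j$) so that
$$\det(y_1, \ldots, y_{n+1}) = \Big|\sum_{j=1}^{n+1} a_j \, y_j^{(i)}\Big|.$$
Thus, for each configuration of the frozen coordinates, the quantity to be controlled has exactly the linear structure to which Lemma 4.8 applies.

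Next, for each $j$ and each choice of frozen coordinates, I would introduce the one-dimensional slice
$$g_j(t) := f_j(y_j^{(1)}, \ldots, y_j^{(i-1)}, t, y_j^{(i+1)}, \ldots, y_j^{(n)}).$$
Lemma 4.8 is stated for the first power of $\big|\sum_j a_j t_j\big|$; to accommodate the exponent $\gamma = (n+1)/p$, I would apply it to $g_j^{1/\gamma}$ in place of $g_j$ and then raise the resulting inequality to the $\gamma$-th power, using the rearrangement identity $(h^{1/\gamma})^{\ast} = (h^{\ast})^{1/\gamma}$ for nonnegative $h$. This yields
$$\sup_{t_1,\ldots,t_{n+1}} \prod_{j=1}^{n+1} g_j^{\ast}(t_j)\Big|\sum_{j} a_j t_j\Big|^{\gamma} \leq \sup_{t_1,\ldots,t_{n+1}} \prod_{j=1}^{n+1} g_j(t_j)\Big|\sum_{j} a_j t_j\Big|^{\gamma}.$$
By the very definition of Steiner symmetrisation, $g_j^{\ast}(t)$ coincides with $(\mathcal{R}_i f_j)(y_j^{(1)}, \ldots, y_j^{(i-1)}, t, y_j^{(i+1)}, \ldots, y_j^{(n)})$. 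Because the displayed inequality holds pointwise in the frozen coordinates, taking the supremum over those coordinates (it commutes with the direction of inequality) and combining with the sup over the $t_j$ reconstitutes the full supremum over $y_1,\ldots,y_{n+1}\in\mathbb{R}^n$, which is exactly (4.19).

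For the ``furthermore'' statement I would iterate (4.19) once for each $i = 1, 2, \ldots, n$; each application replaces $f_j$ by $\mathcal{R}_i f_j$ without increasing the right-hand supremum, and after $n$ steps one reaches $\mathcal{R}_n \cdots \mathcal{R}_1 f_j$. The only non-trivial ingredient is the cofactor expansion, which shows that once the non-$i$ coordinates are frozen the determinant becomes a genuine linear functional of $(y_1^{(i)}, \ldots, y_{n+1}^{(i)})$; once that is in hand, Lemma 4.8 (plus the power-$\gamma$ trick above) does all the work. I therefore do not anticipate a substantive obstacle beyond this bookkeeping.
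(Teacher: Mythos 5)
Your proposal is correct and follows essentially the same route as the paper: the paper's own justification is exactly that, with the non-$i$ coordinates frozen, $\det(y_{1},\dots,y_{n+1})$ is (the absolute value of) a linear combination of the $i$-th coordinates $y_{1i},\dots,y_{(n+1)i}$, so Lemma 4.8 applies slice by slice, with the exponent $\frac{n+1}{p}$ absorbed via the rearrangement identity $(f^{p})^{\ast}=(f^{\ast})^{p}$, and the ``furthermore'' statement obtained by iterating over $i=1,\dots,n$. Your write-up merely makes the cofactor expansion and the supremum over the frozen coordinates explicit, which is a harmless elaboration of the paper's one-line argument.
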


This is because  for each $i$,
$\det(y_{1}, \dots, y_{n+1})$ is the linear combination of $y_{1i}, \dots,y_{(n+1)i}$,
where $y_{ki}$  is the $ i$-th coordinate of $y_{k}\in \mathbb{R}^{n},  1 \leq k \leq n+1$.

\bigskip

We now turn to study the optimiser for multilinear form inequality  (4.11).
By the above, we need only look amongst the class of all functions which are decreasing and symmetric separately in each coordinate variable.

\bigskip

\hspace{-13pt}{\it Proof of Theorem 4.6}\quad
Let $\mathcal{S}$ be the stereographic projection from $\mathbb{R}^{n}$ to the nothern hemisphere  $\mathbb{S}_{+}^{n}$  with
$$\mathcal{S}(x)=(\frac{x_{1}}{\sqrt{1+|x|^{2}}},  \dots, \frac{x_{n}}{\sqrt{1+|x|^{2}}}, \frac{1}{\sqrt{1+|x|^{2}}}),$$
where $ x=( x_{1}, \dots, x_{n}) \in \mathbb{R}^{n}$.
So
$$\mathcal{S}^{-1}(s)= (\frac{s_{1}}{s_{n+1}}, \dots, \frac{s_{n}}{s_{n+1}} ).$$
For $f\in L^{p}(\mathbb{R}^{n})$, define
$$ (\mathcal{S}^{\ast}f)(s):= |J_{\mathcal{S}^{-1}}(s)|^{1/p}  f(\mathcal{S}^{-1}(s))  \eqno(4.20)$$
where $J_{\mathcal{S}^{-1}}$ is the Jacobian determinant of the map $\mathcal{S}^{-1}$,
$$ |J_{\mathcal{S}^{-1}}(s)| = (\frac{1}{s_{n+1}})^{n+1}= (1+|\mathcal{S}^{-1}(s)|^{2})^{\frac{n+1}{2}}. \eqno(4.21)$$
Denote $ \mathcal{S}^{\ast}f_{j}$  by $F_{j}$, and let $y_{j} =  \mathcal{S}^{-1} (s_{j})$,  then we have
\begin{align*}
 \det(y_{1}, \dots, y_{n+1}) &=  \prod_{j=1}^{n+1} ( 1+|y_{j}|^{2})^{1/2}  \det(s_{1}, \dots, s_{n+1})  \\
&=  \prod_{j=1}^{n+1}   |J_{\mathcal{S}^{-1}}(s_{j})| ^{ \frac{1}{n+1} }   \det(s_{1}, \dots, s_{n+1}),
\end{align*}
where $ \det(s_{1}, \dots, s_{n+1})$ is the absolute value of the  determinant of the matrix $(s_{1}, \dots, s_{n+1})_{(n+1) \times (n+1)}$.
Together with (4.20) and (4.21), we also have the conformally  invariant property of  multilinear geometric inequality  (4.11) as follows.
\begin{align*}
\displaystyle{\sup_{s_{j}\in \mathbb{S}_{+}^{n}}}  \prod_{j=1}^{n+1} F_{j}(s_{j}) \det(s_{1}, \dots, s_{n+1})^{\frac{n+1}{p}}
&= \displaystyle{\sup_{s_{j}\in \mathbb{S}_{+}^{n}}} |J_{\mathcal{S}^{-1}}(s_{j})|^{1/p}  f_{j}(\mathcal{S}^{-1}(s_{j}))  \det(s_{1}, \dots, s_{n+1})^{\frac{n+1}{p}} \\
&= \displaystyle{\sup_{y_{j}\in \mathbb{R}^{n}}}  \prod_{j=1}^{n+1} f_{j}(y_{j}) \det(y_{1}, \dots, y_{n+1})^{\frac{n+1}{p}},
\end{align*}
and the invariance of $L^{p}$ norm: for every $j$
$$\|f_{j}\|_{L^{p}(\mathbb{R}^{n})} = (\int_{\mathbb{R}^{n}} |f(x)|^{p} dx )^{1/p}
= (\int_{\mathbb{S}_{+}^{n}} |f(\mathcal{S}^{-1}(s_{j}))|^{p} |J_{\mathcal{S}^{-1}}(s_{j})| ds_{j} )^{1/p}
= (\int_{\mathbb{S}_{+}^{n}} |F_{j}(s_{j})|^{p} ds_{j})^{1/p}. $$

\medskip

For $f\in L^{p}(\mathbb{R}^{n})$, pick  $\alpha$ which is not a rational multiple of $\pi$.  For $1\leq i \leq n$,
we define $U_{\alpha}^{i}: \mathbb{S}_{+}^{n} \rightarrow \mathbb{S}_{+}^{n}$ be a rotation of the sphere $\mathbb{S}^{n}$  by angle $\alpha$  which
 keeps the other basis vectors fixed except the $i$-th and $(n+1)$-th vectors.
If the point after rotation  is  in the southern hemisphere,  we then send the point to its antipodal point in $\mathbb{S}_{+}^{n}$.
For $ F \in L^{p} (\mathbb{S}_{+}^{n})$, define
 $$(( U_{\alpha}^{i})^{\ast} F)(s):= |J_{ (U_{\alpha}^{i})^{-1}}(s)|^{\frac{1}{p} } F ( (U_{\alpha}^{i})^{-1}s ) =  F((U_{\alpha}^{i})^{-1}s).  $$
With the same $\mathcal{S}^{\ast}$ in (4.20), we consider the new function $(\mathcal{S}^{\ast})^{-1} (U_{\alpha}^{i} )^{\ast} \mathcal{S}^{\ast} f$.

\bigskip

In brief we denote this new function $(\mathcal{S}^{\ast})^{-1} (U_{\alpha}^{i} )^{\ast} \mathcal{S}^{\ast} f$
by  $ \mathcal{U}_{\alpha}^{i} f$.
For any $f\in  L^{p} (\mathbb{R}^{n})$,  we define a sequence  $\{f^{k}\}$  as in [4]  as follows,
\begin{center}
$f^{0}= f, \ f^{1} =\mathcal{R}_{n}  \mathcal{R}_{n-1} \dots  \mathcal{R}_{1}  \mathcal{U}_{\alpha}^{1} f$,  \  $f^{2} =  \mathcal{R}_{1} \mathcal{R}_{n}  \dots \mathcal{R}_{2}  \mathcal{U}_{\alpha}^{2} f^{1}$,
\end{center}
$$f^{3} =   \mathcal{R}_{2}   \mathcal{R}_{1}  \mathcal{R}_{n}  \dots \mathcal{R}_{3}  \mathcal{U}_{\alpha}^{3} f^{2},  \cdots,  f^{n+1} = \mathcal{R}_{n}  \dots \mathcal{R}_{1}  \mathcal{U}_{\alpha}^{1} f^{n} \cdots$$
Note that  $\mathcal{U}_{\alpha}^{i}$  and  $\mathcal{R}_{n}  \mathcal{R}_{n-1}  \dots \mathcal{R}_{1}$ are norm-preserving.
It follows from the proof of Theorem 8 in \cite{Valdimarsson} that
for any $f\in  L^{p} (\mathbb{R}^{n}) $,  we have
$\{f^{k}\}$ converges to $h_{f}$  in $ L^{p}$ norm, where
$$h_{f}= c h, \ \ \ \ h (x)=(\frac{1}{1+|x|^{2}})^{\frac{n+1}{2p}}$$
and $c$ is the constant such that  $\|f\|_{p}= \|c  h\|_{p}$.
So
$$c= |\mathbb{S}^{n}_{+}|^{-\frac{1}{p}} \|f\|_{p} .$$

\medskip

Here we will only sketch the argument, mainly using the competing symmetries in one dimension.
First  it is enough to consider the bounded functions that vanish outside a bounded set which are dense in $L^{p}$,
so there exists a constant $C$ such that $f(x) \leq Ch_{f}(x)$.
Note that $\mathcal{R}_{j}$ and $\mathcal{U}_{\alpha}^{j}$ are order-preserving,
then we have $f^{k}(x) \leq Ch_{f}(x)$ for every $f^{k}$ and all $x$.
By Helly's selection principle we can find a subsequence $f^{k_{l}}$ such that $f^{k_{l}}$  converges to some $g$ almost everywhere as $l \rightarrow \infty$.
The dominated convergence theorem implies that $g \in L^{p}$.
We define
$$A:= \inf\limits_{n} \|h_{f}-f^{k}\|_{p}= \lim\limits_{n} \|h_{f}-f^{k}\|_{p},$$
this is because $\|h_{f}-f^{k}\|_{p}$ decreases monotonically which follows from
the property
$$\|\mathcal{R}_{j}f-\mathcal{R}_{j}g\|_{p}\leq \|f-g\|_{p}, \ \ \|\mathcal{U}_{\alpha}^{j}f-\mathcal{U}_{\alpha}^{j}g\|_{p}= \|f-g\|_{p}  \eqno(4.22)$$
and the invariance of $h_{f}$ under each $\mathcal{R}_{j}$ and $\mathcal{U}_{\alpha}^{j}$. \\
Applying  these properties again gives  that
$$\begin{array}{ll}
A&= \lim\limits_{n} \|h_{f}-f^{k_{l}+1}\|_{p} \\
&= \|h_{f}- \mathcal{R}_{n}  \mathcal{R}_{n-1} \dots  \mathcal{R}_{1}  \mathcal{U}_{\alpha}^{1}g\|_{p} \\
&= \|\mathcal{R}_{n}  \mathcal{R}_{n-1} \dots  \mathcal{R}_{1}  \mathcal{U}_{\alpha}^{1}h_{f}-
 \mathcal{R}_{n}  \mathcal{R}_{n-1} \dots  \mathcal{R}_{1}  \mathcal{U}_{\alpha}^{1}g\|_{p} \\
&\leq  \| \mathcal{U}_{\alpha}^{1}h_{f}-  \mathcal{U}_{\alpha}^{1}g\|_{p} \\
&=  \|h_{f}-g\|_{p} =A,
\end{array}\eqno(4.23)$$
then we must have equality everywhere
$$\begin{array}{ll}
\|h_{f}- \mathcal{R}_{n}  \mathcal{R}_{n-1} \dots  \mathcal{R}_{1}  \mathcal{U}_{\alpha}^{1}g\|_{p}
&= \| h_{f}-\mathcal{R}_{n-1} \dots  \mathcal{R}_{1}  \mathcal{U}_{\alpha}^{1}g\|_{p}  \\
&= \dots   \\
&=\| h_{f}-\mathcal{R}_{1}  \mathcal{U}_{\alpha}^{1}g\|_{p}   \\
&=\| h_{f}- \mathcal{U}_{\alpha}^{1}g\|_{p}
\end{array}\eqno(4.24)$$
which implies  (see Theorem 3.5 of \cite{Lieb-Loss})
$$ \mathcal{R}_{n}  \mathcal{R}_{n-1} \dots  \mathcal{R}_{1}  \mathcal{U}_{\alpha}^{1}g
= \mathcal{R}_{n-1} \dots  \mathcal{R}_{1}  \mathcal{U}_{\alpha}^{1}g
= \dots = \mathcal{R}_{1}  \mathcal{U}_{\alpha}^{1}g
=  \mathcal{U}_{\alpha}^{1}g  \eqno(4.25)$$
It turns out that  $\mathcal{R}_{1}  \mathcal{U}_{\alpha}^{1}g=  \mathcal{U}_{\alpha}^{1}g$  and  $\mathcal{R}_{1}g=g$ imply
$ \mathcal{U}_{2 \alpha}^{1}g=g$ which shows
$\mathcal{S}^{\ast} g$ is invariant under the rotation through an angle $2 \alpha$
which keeps the other basis vectors fixed except the $1$-th and $(n+1)$-th ones.
In particular, $2 \alpha$ is an irrational multiple of $\pi$.
Therefore,
for any fixed $s_{2}, \dots, s_{n}$, $(\mathcal{S}^{\ast} g)(\cdot,  s_{2}, \dots, s_{n}, \cdot)$  is a constant.
Also we have
$$\mathcal{R}_{1}  \mathcal{U}_{\alpha}^{1}g=   \mathcal{U}_{\alpha}^{1}g=g.  \eqno(4.26) $$

\medskip

Similarly, if we replace $f^{k_{l}+1}$ in (4.23) by $f^{k_{l}+2}$,  together with (4.25)-(4.26) and Theorem 3.5 of \cite{Lieb-Loss}
 we have
$$ \mathcal{R}_{1}\mathcal{R}_{n} \dots  \mathcal{R}_{2} \mathcal{U}_{\alpha}^{2} g
= \mathcal{R}_{n} \dots  \mathcal{R}_{2} \mathcal{U}_{\alpha}^{2}  g
= \dots = \mathcal{R}_{2}  \mathcal{U}_{\alpha}^{2} g
=  \mathcal{U}_{\alpha}^{2} g.  \eqno(4.27)$$
From  $\mathcal{R}_{2}  \mathcal{U}_{\alpha}^{2}g=   \mathcal{U}_{\alpha}^{2}g$  and  $\mathcal{R}_{2}g=g$ we obtain  that
$ \mathcal{U}_{2 \alpha}^{2}g=g$ which shows
$\mathcal{S}^{\ast} g$ is invariant under the rotation through an angle $2 \alpha$
which keeps the other basis vectors fixed except the $2$-th and $(n+1)$-th ones.
So for any fixed $s_{1},  s_{3}, \dots, s_{n}$, $(\mathcal{S}^{\ast} g)(s_{1}, \cdot, s_{3}, \dots, s_{n}, \cdot)$  is  a constant,
since  $2 \alpha$ is an irrational multiple of $\pi$.
Meanwhile we have
$$\mathcal{R}_{2}  \mathcal{U}_{\alpha}^{2}g=   \mathcal{U}_{\alpha}^{2}g=g.  \eqno(4.28) $$
So far based on the discussion above,  we've got  for any fixed $ s_{3}, \dots, s_{n}$, $(\mathcal{S}^{\ast} g)(\cdot, \cdot, s_{3}, \dots, s_{n}, \cdot)$  must be  a constant.

By induction we can obtain
$ \mathcal{S}^{\ast}g$ is a constant function on $\mathbb{S}_{+}^{n}$,  and thus the corresponding function $g$ on $\mathbb{R}^{n}$ is $C h_{f}$.
Note that  $\mathcal{R}_{j}$ and $\mathcal{U}_{\alpha}^{j}$ are norm-preserving,
so
$$\|g\|_{p}= \lim\limits_{n} \|f^{k_{l}}\|_{p}= \|f\|_{p}, $$
which gives $C=1$, $g=h_{f}$.
Therefore, the sequence $f^{k}$ converges to $h_{f}$  in $ L^{p}$ norm.

\medskip

It follows from Lemma 4.8,   inequality (4.19)  of  Corollary 4.9
and the invariance of the multilinear geometric inequality under stereographic projection that
$\displaystyle{\sup_{y_{j}}} \ \displaystyle{\prod_{j=1}^{n+1}} \ f_{j}^{k}  (y_{j}) \det(y_{1}, \dots,y_{n+1})^{\frac{n+1}{p}}$  decreases monotonically as $k$ grows.
 That is for all $k \in \mathbb{N}$,
$$   \displaystyle{\sup_{y_{j}}} \ \displaystyle{\prod_{j=1}^{n+1}} \ f_{j}^{k}  (y_{j}) \det(y_{1}, \dots,y_{n+1})^{\frac{n+1}{p}}  \geq  \displaystyle{\sup_{y_{j}}} \ \displaystyle{\prod_{j=1}^{n+1}} \ f_{j}^{k+1}  (y_{j}) \ \det(y_{1}, \dots,y_{n+1})^{\frac{n+1}{p}} .$$
Since $\{f_{j}^{k}\}$ converges to $ h_{f_{j}}$  in $ L^{p}$  norm , $1 \leq j \leq n+1$,  then there exist subsequences
$\{ f_{j}^{k_{l}}\}$ such that $f_{j}^{k_{l}} \rightarrow  h_{f_{j}}$  pointwise almost everywhere as $ l  \rightarrow \infty$.
From
$$ \displaystyle{\sup_{y_{j}}} \ \displaystyle{\prod_{j=1}^{n+1}} \ f_{j}^{k_{l}}  (y_{j}) \det(y_{1}, \dots,y_{n+1})^{\frac{n+1}{p}}  \leq \displaystyle{\sup_{y_{j}}} \ \displaystyle{\prod_{j=1}^{n+1}} \ f_{j}  (y_{j}) \ \det(y_{1}, \dots,y_{n+1})^{\frac{n+1}{p}}  < \infty  $$
for all $k_{l}$ together with the dominated convergence theorem it follows that
$$ \displaystyle{\prod_{j=1}^{n+1}} \ f_{j}^{k_{l}}  (y_{j}) \det(y_{1}, \dots,y_{n+1})^{\frac{n+1}{p}}  \xrightarrow{ weak^* }  \displaystyle{\prod_{j=1}^{n+1}} \ h_{f_{j}}(y_{j}) \det(y_{1}, \dots,y_{n+1})^{\frac{n+1}{p}},$$
in $ L^{\infty}(\mathbb{R}^{n}) \times \dots \times L^{\infty}(\mathbb{R}^{n})$  as $l  \rightarrow \infty$.   \\
Hence  by the $\mathrm{weak}^*$ lower semicontinuity of the $L^{\infty}$ norm ,
\begin{align*}
 \displaystyle{\sup_{y_{j}}}  \  \displaystyle{\prod_{j=1}^{n+1}} \  h_{f_{j}}(y_{j}) \det(y_{1}, \dots,y_{n+1})^{\frac{n+1}{p}}
&\leq   \displaystyle{\liminf_{l}} \ ( \displaystyle{\sup_{y_{j}}} \ \displaystyle{\prod_{j=1}^{n+1}} \ f_{j}^{k_{l}}  (y_{j}) \det(y_{1}, \dots,y_{n+1})^{\frac{n+1}{p}} \\
&=   \displaystyle{\inf_{l}} \ (  \displaystyle{\sup_{y_{j}}} \ \displaystyle{\prod_{j=1}^{n+1}} \ f_{j}^{k_{l}}  (y_{j}) \det(y_{1}, \dots,y_{n+1})^{\frac{n+1}{p}}.
\end{align*}
Combining this with the norm-preserving property $\| f_{j} \|_{p} = \|  f_{j}^{k} \|_{p}$ for every $k$  and  the decreasing  property of $\displaystyle{\sup_{y_{j}}} \ \displaystyle{\prod_{j=1}^{n+1}} \ f_{j}^{k}  (y_{j}) \ \det(y_{1},...,y_{n+1})^{\frac{n+1}{p}}$,
we get for all  $f_{j} \in  L^{p} (\mathbb{R}^{n})$  and every $k_{l}$,
\begin{align*}
\frac{\displaystyle{\sup_{y_{j}}} \ \displaystyle{\prod_{j=1}^{n+1}} \ f_{j}  (y_{j}) \det(y_{1}, \dots,y_{n+1})^{\frac{n+1}{p}}}{ \displaystyle{\prod_{j=1}^{n+1}} \|f_{j}\|_{p}  }
&\geq \frac{\displaystyle{\sup_{y_{j}}} \ \displaystyle{\prod_{j=1}^{n+1}} \ f_{j}^{k_{l}}  (y_{j}) \det(y_{1}, \dots,y_{n+1})^{\frac{n+1}{p}}}{ \displaystyle{\prod_{j=1}^{n+1}} \|f_{j}^{k_{l}}\|_{p}  }  \\
&\geq  \frac{\displaystyle{\sup_{y_{j}}} \ \displaystyle{\prod_{j=1}^{n+1}} \   h_{f_{j}}(y_{j}) \det(y_{1}, \dots,y_{n+1})^{\frac{n+1}{p}}}{  \prod\limits_{j=1}^{n+1} \|h_{f_{j}}\|_{p}   }
 \end{align*}
Obviously,
$$\frac{\displaystyle{\sup_{y_{j}}} \ \displaystyle{\prod_{j=1}^{n+1}} \   h_{f_{j}}(y_{j}) \det(y_{1}, \dots,y_{n+1})^{\frac{n+1}{p}}}{ \prod\limits_{j=1}^{n+1} \|h_{f_{j}}\|_{p} }
= \frac{\displaystyle{\sup_{y_{j}}} \ \displaystyle{\prod_{j=1}^{n+1}} \   h(y_{j}) \det(y_{1}, \dots,y_{n+1})^{\frac{n+1}{p}}}{ \|h\|_{p}^{n+1}  }.$$

$\hfill\Box$

\bigskip

Based on Theorem 4.6  and the conformal invariance under the stereographic projection from $\mathbb{R}^{n}$ to $\mathbb{S}_{+}^{n}$,
the geometric inequality (4.11) has the conformally equivalent form in $\mathbb{S}^{n}$ space.

\medskip

\begin{theorem}
For $0 < p < \infty$, let $F_{j}$  be  nonnegative functions in $ L^{p}(\mathbb{S}^{n})$. Then
$$  \displaystyle{\prod_{j=1}^{n+1}} \|F_{j}\|_{L^{p}(\mathbb{S}^{n})} \leq B_{p,n} \
\displaystyle{\sup_{s_{j}\in \mathbb{S}^{n} }} \ \displaystyle{\prod_{j=1}^{n+1}} \ F_{j}(s_{j}) \det(s_{1}, \dots, s_{n+1})^{\frac{n+1}{p}},  \eqno(4.29)$$
where $\det(s_{1}, \dots, s_{n+1})$ is  the absolute value of  the determinant of the matrix $(s_{1}, \dots, s_{n+1})_{(n+1) \times (n+1)}$.
The best constant $B_{p,n}$ is obtained when $F_{j}(s_{j})$ are constant, and the  corresponding  $B_{p,n}= |\mathbb{S}^{n}|^{\frac{n+1}{p}}$.
\end{theorem}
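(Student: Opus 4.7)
The strategy is to pull back the sphere inequality to $\mathbb{R}^n$ via the hemisphere stereographic projection $\mathcal{S}: \mathbb{R}^n \to \mathbb{S}_+^n$ used in the proof of Theorem~4.6, and then invoke Theorem~4.6 itself. The new wrinkle relative to Theorem~4.6, which only delivers the inequality on a single hemisphere, is that we must account for the mass of the $F_j$'s lying on the opposite hemisphere.

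First I would verify that (4.29) restricted to $\mathbb{S}_+^n$ is precisely the image of the $\mathbb{R}^n$ inequality (4.11) under $\mathcal{S}$: the $L^p$ norms agree via the factor $|J_{\mathcal{S}^{-1}}(s)|^{1/p} = (1+|y|^2)^{(n+1)/(2p)}$, and the identity
$$\det(y_1, \ldots, y_{n+1}) = \prod_{j=1}^{n+1} \sqrt{1+|y_j|^2}\cdot \det(\mathcal{S}(y_1), \ldots, \mathcal{S}(y_{n+1}))$$
derived within the proof of Theorem~4.6 makes the two suprema coincide. Theorem~4.6 therefore yields (4.29) for functions supported on $\mathbb{S}_+^n$ with constant $(|\mathbb{S}^n|/2)^{(n+1)/p}$, which is a factor of $2^{(n+1)/p}$ smaller than the constant $B_{p,n}$ claimed here.

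The main obstacle is bridging from this hemispheric inequality to the full sphere while recovering precisely that extra factor $2^{(n+1)/p}$. I would decompose $F_j = F_j^+ + F_j^-$ with $F_j^{\pm} = F_j\chi_{\mathbb{S}_\pm^n}$ and expand
$$\prod_{j=1}^{n+1} \|F_j\|_{L^p(\mathbb{S}^n)}^p = \sum_{\varepsilon \in \{+,-\}^{n+1}} \prod_{j=1}^{n+1} \|F_j\|_{L^p(\mathbb{S}_{\varepsilon_j}^n)}^p,$$
and for each of the $2^{n+1}$ sign patterns $\varepsilon$ establish an analogous hemispheric inequality when the $j$-th function lives on $\mathbb{S}_{\varepsilon_j}^n$. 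Using the two hemisphere projections $\mathcal{S}^{\pm}(y) = (y,\pm 1)/\sqrt{1+|y|^2}$, a direct calculation analogous to the one in Theorem~4.6 gives
$$\det(s_1, \ldots, s_{n+1}) = \prod_{j=1}^{n+1} \frac{1}{\sqrt{1+|y_j|^2}}\left|\det\begin{pmatrix} y_1 & \cdots & y_{n+1} \\ \varepsilon_1 & \cdots & \varepsilon_{n+1} \end{pmatrix}\right|,$$
and the sign-flipping change of variables $z_j = \varepsilon_j y_j$ on $\mathbb{R}^n$, which is an $L^p$-isometry, reduces this second factor to the paper's ordinary simplex-volume $\det(z_1,\ldots,z_{n+1})$. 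Theorem~4.6 applied to the transformed functions then supplies the desired mixed-sign hemispheric inequality, with the same constant $(|\mathbb{S}^n|/2)^{(n+1)/p}$.

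Summing the $2^{n+1}$ inequalities and bounding each right-hand side by the global quantity $A := \sup_{s_j \in \mathbb{S}^n} \prod_j F_j(s_j)\det(s_1,\ldots,s_{n+1})^{(n+1)/p}$ would yield
$$\prod_{j=1}^{n+1} \|F_j\|_{L^p(\mathbb{S}^n)}^p \leq 2^{n+1}\left(\frac{|\mathbb{S}^n|}{2}\right)^{n+1}A^p = |\mathbb{S}^n|^{n+1}A^p,$$
which is (4.29) with $B_{p,n} = |\mathbb{S}^n|^{(n+1)/p}$. Finally, plugging in $F_j \equiv 1$ and noting that $\sup_{s_j\in\mathbb{S}^n}\det(s_1,\ldots,s_{n+1}) = 1$ (attained by orthonormal frames) simultaneously confirms that constants are extremisers and that the constant $B_{p,n}$ is sharp.
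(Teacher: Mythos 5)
Your proposal is correct, and it shares the paper's two essential ingredients — the hemispheric inequality with constant $(\tfrac12|\mathbb{S}^n|)^{\frac{n+1}{p}}$ obtained from Theorem 4.6 via the projection onto $\mathbb{S}_+^n$, and the invariance of $\det(s_1,\dots,s_{n+1})$ under antipodal reflection of any single argument — but the bridging step from the hemisphere to the full sphere is genuinely different. The paper folds each function onto the upper hemisphere, setting $\overline{F}_j(s)=\max\{F_j(s),F_j(\overline{s})\}$, notes that the folded supremum over $\mathbb{S}_+^n$ equals the full-sphere supremum and that $2\|\overline{F}_j\|_{L^p(\mathbb{S}_+^n)}^p\geq\|F_j\|_{L^p(\mathbb{S}^n)}^p$, and then applies the hemispheric inequality (4.30) once; sharpness is argued by contradiction against the sharp constant in (4.30). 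You instead expand $\prod_j\|F_j\|_{L^p(\mathbb{S}^n)}^p$ into the $2^{n+1}$ sign-pattern terms $\prod_j\|F_j\|_{L^p(\mathbb{S}^n_{\varepsilon_j})}^p$ and prove a mixed-hemisphere inequality for each pattern by pulling back through $\mathcal{S}^{\pm}$ and flipping signs $z_j=\varepsilon_j y_j$ in $\mathbb{R}^n$; your bordered-determinant identity is right (factor $\varepsilon_j$ out of each column, using $\varepsilon_j^2=1$), the sign flip is an $L^p$-isometry, so each term is at most $(\tfrac12|\mathbb{S}^n|)^{n+1}A^p$ and the sum gives exactly $|\mathbb{S}^n|^{n+1}A^p$. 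Your route costs more bookkeeping ($2^{n+1}$ cases plus the sign-flip reduction), whereas the paper's folding trick needs only one application of (4.30) and no case analysis; on the other hand your argument uses only the validity, not the sharpness, of the hemispheric bound, and your sharpness step — equality for $F_j\equiv 1$ because $\sup_{s_j\in\mathbb{S}^n}\det(s_1,\dots,s_{n+1})=1$ at orthonormal frames, combined with the already-proven upper bound — is more direct than the paper's contradiction argument.
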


\begin{proof}
From  Theorem 4.6  and and the conformal invariance of (4.11) under the stereographic projection from $\mathbb{R}^{n}$ to $\mathbb{S}_{+}^{n}$,
we obtain for nonnegative functions $F_{j} \in L^{p}(\mathbb{S}_{+}^{n})$,
$$  \displaystyle{\prod_{j=1}^{n+1}} \|F_{j}\|_{L^{p}(\mathbb{S}_{+}^{n})} \leq C_{p,n} \
\displaystyle{\sup_{s_{j}\in \mathbb{S}_{+}^{n} }} \ \displaystyle{\prod_{j=1}^{n+1}} \ F_{j}(s_{j}) \det(s_{1},..., s_{n+1})^{\frac{n+1}{p}}  \eqno(4.30)$$
holds.  The best constant $C_{p,n}$ is  obtained when $F_{j}(s_{j})$ are constant, and the corresponding
$C_{p,n}= |\mathbb{S}_{+}^{n}|^{\frac{n+1}{p}}=(\frac{1}{2}|\mathbb{S}^{n}| )^{\frac{n+1}{p}}$.
Note that
$$\displaystyle{\sup_{s_{j} \in \mathbb{S}_{+}^{n} }} \ \det(s_{1}, \dots,s_{n+1})=\displaystyle{\sup_{s_{j} \in \mathbb{S}^{n} }} \ \det(s_{1}, \dots,s_{n+1})=1.$$

Let $F_{j}$  be  nonnegative functions in $ L^{p}(\mathbb{S}^{n})$. We define
$$\overline{F}_{j}(s_{j})=\max \{F_{j}(s_{j}), F_{j}(\overline{s}_{j})\},$$
where $\overline{s}_{j}$ is the antipodal point of $s_{j}$, $1 \leq j \leq n+1$,
$s_{j} \in \mathbb{S}_{+}^{n}$. \\
Then  $\overline{F}_{j} \in  L^{p}(\mathbb{S}_{+}^{n})$, and
$$\displaystyle{\sup_{s_{j} \in \mathbb{S}_{+}^{n} }} \ \displaystyle{\prod_{j=1}^{n+1}} \ \overline{F}_{j}(s_{j}) \det(s_{1}, \dots,s_{n+1})^{\frac{n+1}{p}}
=
\displaystyle{\sup_{s_{j} \in \mathbb{S}^{n} }} \ \displaystyle{\prod_{j=1}^{n+1}} \ F_{j}(s_{j}) \det(s_{1}, \dots,s_{n+1})^{\frac{n+1}{p}}, \eqno(4.31)$$
this is because for any $s_{j} \in \mathbb{S}_{+}^{n}$,
$$\det(s_{1}, \dots, \overline{s}_{j}, \dots,s_{n+1})= \det(s_{1}, \dots,s_{j}, \dots,s_{n+1}).$$
Besides,
$$2 \|\overline{F}_{j}\|_{L^{p}(\mathbb{S}_{+}^{n})}^{p}
\geq  \int_{\mathbb{S}_{+}^{n}} (F_{j}(s_{j}))^{p}  d s_{j} +    \int_{\mathbb{S}_{+}^{n}} (F_{j}(\overline{s}_{j}))^{p} d s_{j}
=\|F_{j}\|_{L^{p}(\mathbb{S}^{n})}^{p}. $$
Thus  for each $j$
$$\|\overline{F}_{j}\|_{L^{p}(\mathbb{S}_{+}^{n})} \geq  2^{-\frac{1}{p}} \|F_{j}\|_{L^{p}(\mathbb{S}^{n})}. \eqno(4.32)$$
It follows from (4.30)-(4.32) that   for any nonnegative $F_{j} \in  L^{p}(\mathbb{S}^{n})$,
\begin{align*}
 \displaystyle{\prod_{j=1}^{n+1}} \|F_{j}\|_{L^{p}(\mathbb{S}^{n})}
&\leq 2^{\frac{n+1}{p}} \displaystyle{\prod_{j=1}^{n+1}}  \|\overline{F}_{j}\|_{L^{p}(\mathbb{S}_{+}^{n}) } \\
&\leq 2^{\frac{n+1}{p}} |\mathbb{S}_{+}^{n}|^{\frac{n+1}{p}} \
\displaystyle{\sup_{s_{j}\in \mathbb{S}_{+}^{n} }} \ \displaystyle{\prod_{j=1}^{n+1}} \ \overline{F}(s_{j}) \det(s_{1}, \dots, s_{n+1})^{\frac{n+1}{p}} \\
&= |\mathbb{S}^{n}|^{\frac{n+1}{p}} \ \displaystyle{\sup_{s_{j} \in \mathbb{S}^{n} }} \ \displaystyle{\prod_{j=1}^{n+1}} \ F_{j}  (s_{j}) \det(s_{1}, \dots,s_{n+1})^{\frac{n+1}{p}},
\end{align*}
which proves (4.29).

To show that  $|\mathbb{S}^{n}|^{\frac{n+1}{p}}$ is the best constant in (4.29),
suppose for a contradiction that
 $F_{j}(s_{j}) \in L^{p}(\mathbb{S}^{n})$ is an optimiser for (4.29) that satisfies
$$\frac{\displaystyle{\sup_{s_{j} \in \mathbb{S}^{n} }} \ \displaystyle{\prod_{j=1}^{n+1}} \ F_{j}  (s_{j}) \det(s_{1}, \dots,s_{n+1})^{\frac{n+1}{p}}}{ \displaystyle{\prod_{j=1}^{n+1}} \|F_{j}\|_{L^{p}(\mathbb{S}^{n}) } }
< \frac{\displaystyle{\sup_{s_{j} \in \mathbb{S}^{n} }} \ \det(s_{1}, \dots,s_{n+1})^{\frac{n+1}{p}}}{ |\mathbb{S}^{n}|^{\frac{1}{p}} \cdots |\mathbb{S}^{n}|^{\frac{1}{p}}  }
= |\mathbb{S}^{n}|^{-\frac{n+1}{p}} .$$
Then by (4.31)-(4.32)  we find $\overline{F}_{j}(s_{j})$ defined as above satisfying
\begin{align*}
\frac{\displaystyle{\sup_{s_{j}\in \mathbb{S}_{+}^{n} }} \ \displaystyle{\prod_{j=1}^{n+1}} \ \overline{F}_{j}(s_{j}) \det(s_{1}, \dots, s_{n+1})^{\frac{n+1}{p}}}{\displaystyle{\prod_{j=1}^{n+1}}  \|\overline{F}_{j}\|_{L^{p}(\mathbb{S}_{+}^{n}) }}
&\leq  \frac{\displaystyle{\sup_{s_{j} \in \mathbb{S}^{n} }} \
\displaystyle{\prod_{j=1}^{n+1}} \ F_{j}  (s_{j}) \det(s_{1}, \dots,s_{n+1})^{\frac{n+1}{p}}}{2^{-\frac{n+1}{p}} \displaystyle{\prod_{j=1}^{n+1}} \|F_{j}\|_{L^{p}(\mathbb{S}^{n}) } } \\
&< 2^{\frac{n+1}{p}} |\mathbb{S}^{n}|^{-\frac{n+1}{p}} = |\mathbb{S}_{+}^{n}|^{-\frac{n+1}{p}}.
\end{align*}
This is  in  contradiction to the best constant in (4.30).
Hence the best constant  $B_{p,n}$ in (4.29) is $|\mathbb{S}^{n}|^{\frac{n+1}{p}}$.

\end{proof}

\medskip

Also the geometric  inequality (4.11) has the conformally equivalent form in
$\mathbb{H}^{n}$ space as shown in the following theorem.

\medskip

\begin{theorem}
For $0 < p < \infty$, let $F_{j}$  be  nonnegative functions in $ L^{p}(\mathbb{H}^{n})$. Then
$$  \displaystyle{\prod_{j=1}^{n+1}} \|F_{j}\|_{L^{p}(\mathbb{H}^{n})} \leq E_{p,n} \
\displaystyle{\sup_{q_{j}\in \mathbb{H}^{n}}} \ \displaystyle{\prod_{j=1}^{n+1}} \ F_{j}(q_{j}) \det(q_{1}, \dots,q_{n+1})^{\frac{n+1}{p}},  \eqno(4.33)$$
where $\det(q_{1}, \dots,q_{n+1})$ is the absolute value of the  determinant of the matrix $(q_{1}, \dots, q_{n+1})_{(n+1) \times (n+1)}$.

\end{theorem}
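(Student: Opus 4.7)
The plan is to transfer Theorem 4.6 from $\mathbb{R}^{n}$ to $\mathbb{H}^{n}$ via a conformal stereographic projection, in direct parallel with how Theorem 4.11 transferred it to $\mathbb{S}^{n}$ and how Theorem 4.5 transferred the bilinear case to $\mathbb{H}^{n}$. I would use either the projection $\mathcal{H}$ from the proof of Theorem 4.5, or the central projection $\mathcal{H}(x)=(x_{1},\dots,x_{n},1)/\sqrt{1-|x|^{2}}$ which sends the open unit ball $\{|x|<1\}\subset\mathbb{R}^{n}$ onto the upper sheet $\mathbb{H}^{n}_{+}$. Either way I would define the pull-back $(\mathcal{H}^{\ast}f)(q):=|J_{\mathcal{H}^{-1}}(q)|^{1/p}f(\mathcal{H}^{-1}(q))$, normalized so that $\|\mathcal{H}^{\ast}f\|_{L^{p}(\mathbb{H}^{n})}=\|f\|_{L^{p}(\mathbb{R}^{n})}$, just as in (4.20).

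The heart of the proof is the determinant identity analogous to the one established in the $\mathbb{S}^{n}$ case. Writing $y_{j}=\mathcal{H}^{-1}(q_{j})$ and using the central projection, each $q_{j}$ has the form $(y_{j},1)/\sqrt{1-|y_{j}|^{2}}$, so factoring the common denominator out of each column of the $(n+1)\times(n+1)$ matrix yields
\[
\det(q_{1},\dots,q_{n+1}) \;=\; \prod_{j=1}^{n+1}\frac{1}{\sqrt{1-|y_{j}|^{2}}}\,\det(y_{1},\dots,y_{n+1})
\;=\;\prod_{j=1}^{n+1}|J_{\mathcal{H}^{-1}}(q_{j})|^{1/(n+1)}\,\det(y_{1},\dots,y_{n+1}),
\]
where I used that the remaining determinant, with its last row consisting entirely of $1$'s, is precisely the $n!$-times-simplex-volume quantity from the paper's convention, and that $|J_{\mathcal{H}^{-1}}(q)|=q_{n+1}^{-(n+1)}=(1-|y|^{2})^{(n+1)/2}$. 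Combining this identity with the invariance of the $L^{p}$ norm, both sides of (4.33) pull back exactly, with no leftover factors, to both sides of (4.11) on $\{|x|<1\}$, and Theorem 4.6 then yields (4.33) for functions $F_{j}$ supported in $\mathbb{H}^{n}_{+}$.

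To extend from the upper sheet to all of $\mathbb{H}^{n}$, I would then repeat the folding trick used in the proof of Theorem 4.11: define $\overline{F}_{j}(q):=\max\{F_{j}(q),F_{j}(-q)\}$ on $\mathbb{H}^{n}_{+}$, where the map $q\mapsto -q$ interchanges the two sheets. Since the $(n+1)\times(n+1)$ determinant flips sign under negating any column but we always take its absolute value, the supremum on the right of (4.33) is unchanged; the norms satisfy $2\|\overline{F}_{j}\|_{L^{p}(\mathbb{H}^{n}_{+})}^{p}\geq \|F_{j}\|_{L^{p}(\mathbb{H}^{n})}^{p}$, exactly as in (4.31)--(4.32). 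Applying the half-hyperboloid inequality to $\overline{F}_{j}$ then produces (4.33) on the full hyperboloid with an explicit constant $E_{p,n}$, up to a factor of $2^{(n+1)/p}$.

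The main obstacle is the careful bookkeeping of the determinant identity: the paper's convention is that $\det(y_{1},\dots,y_{n+1})\geq 0$ (it equals $n!$ times a volume), so the sign ambiguity coming from the orientation of $\mathbb{H}^{n}\subset\mathbb{R}^{n+1}$ must be absorbed into absolute values at the right moment, and one must check that the projection genuinely covers $\mathbb{H}^{n}_{+}$ up to a null set so that the suprema on the two sides of the identity match. A secondary point, already present in Theorem 4.5, is that the Lorentzian geometry of $\mathbb{H}^{n}$ introduces signs in the inner product $qt=-q_{1}t_{1}-\cdots -q_{n}t_{n}+q_{n+1}t_{n+1}$ that do not appear in the $\mathbb{S}^{n}$ case, but since we are computing a Euclidean $(n+1)\times(n+1)$ determinant of the coordinate vectors rather than a Gram determinant, this does not interfere once the absolute value is in place.
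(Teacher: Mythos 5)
Your proposal is correct and follows essentially the same route as the paper: project the unit ball onto the upper sheet $\mathbb{H}^{n}_{+}$ via $x\mapsto (x,1)/\sqrt{1-|x|^{2}}$, use the identity $\det(y_{1},\dots,y_{n+1})=\prod_{j}|J_{\mathcal{H}^{-1}}(q_{j})|^{1/(n+1)}\det(q_{1},\dots,q_{n+1})$ together with the $L^{p}$-norm invariance to pull (4.33) back to (4.11) on the ball and apply Theorem 4.6, then fold the two sheets exactly as in the $\mathbb{S}^{n}$ argument to get $E_{p,n}=2^{\frac{n+1}{p}}|\mathbb{S}^{n}_{+}|^{\frac{n+1}{p}}$. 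The only caution is your opening hedge: the map from the proof of Theorem 4.5 (with components $2x_{i}/(1-|x|^{2})$) would not yield this determinant identity, so the central projection you actually compute with is the one to use, as in the paper.
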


\begin{proof}
Consider the stereographic projection $\mathcal{H}$ which is a conformal transformation
from the unit disk $D^{n}$ in  $\mathbb{R}^{n}$  to $\mathbb{H}_{+}^{n}$ as
$$\mathcal{H}(x)=(\frac{x_{1}}{\sqrt{1-|x|^{2}}}, \dots,\frac{x_{n}}{\sqrt{1-|x|^{2}}}, \frac{1}{\sqrt{1-|x|^{2}}}),$$
where $x= (x_{1}, \dots, x_{n}) \in \mathbb{R}^{n}$.
So
$$\mathcal{H}^{-1}(q)= (\frac{q_{1}}{q_{n+1}}, \dots,\frac{q_{n}}{q_{n+1}}).$$
Here the Jacobian determinant of the map $\mathcal{H}^{-1}$ is
$$|J_{\mathcal{H}^{-1}(q)}|= (1- |\mathcal{H}^{-1}(q)|^{2})^{\frac{n+1}{2}}.$$
Let $y_{j}=\mathcal{H}^{-1}(q_{j})$,
then we have
\begin{align*}
 \det(y_{1}, \dots, y_{n+1}) &=  \prod_{j=1}^{n+1} ( 1-|y_{j}|^{2})^{1/2}  \det(q_{1}, \dots, q_{n+1})  \\
&=    \prod_{j=1}^{n+1}  |J_{\mathcal{H}^{-1}}(q_{j})| ^{ \frac{1}{n+1} }  \det(q_{1}, \dots, q_{n+1}).
\end{align*}
Define $F_{j}(q_{j}):= |J_{\mathcal{H}^{-1}}(q_{j})|^{1/p} f_{j}(\mathcal{H}^{-1}(q_{j}))$,
then from above we easily get the conformal invariance as follows.
\begin{align*}
\displaystyle{\sup_{q_{j}\in \mathbb{H}_{+}^{n}}} \ \displaystyle{\prod_{j=1}^{n+1}} \ F_{j}(q_{j}) \det(q_{1}, \dots,q_{n+1})^{\frac{n+1}{p}}
&= \displaystyle{\sup_{q_{j}\in \mathbb{H}_{+}^{n}}} \ \displaystyle{\prod_{j=1}^{n+1}} \ |J_{\mathcal{H}^{-1}}(q_{j})|^{1/p} f_{j}(\mathcal{H}^{-1}(q_{j})) \det(q_{1}, \dots,q_{n+1})^{\frac{n+1}{p}} \\
&= \displaystyle{\sup_{y_{j}\in D^{n}}} \ \displaystyle{\prod_{j=1}^{n+1}} \ f_{j}(y_{j}) \det(y_{1}, \dots,y_{n+1})^{\frac{n+1}{p}},
\end{align*}
and for every $j$
\begin{align*}
\|f_{j}\|_{L^{p}(D^{n})} &= (\int_{D^{n}} |f_{j}(y_{j})|^{p} dy_{j})^{1/p} \\
&= (\int_{\mathbb{H}_{+}^{n}} |f_{j}(\mathcal{H}^{-1}(q_{j}))|^{p} |J_{\mathcal{H}^{-1}(q)}| dq_{j})^{1/p} \\
&= (\int_{\mathbb{H}_{+}^{n}} |F_{j}(q_{j})|^{p} dq_{j})^{1/p}
= \|F_{j}\|_{L^{p}(\mathbb{H}_{+}^{n})}.
\end{align*}
Theorem 4.6 implies that
$$ \displaystyle{\prod_{j=1}^{n+1}} \|f_{j}\|_{L^{p}(D^{n})} \leq C_{p,n}
\displaystyle{\sup_{y_{j}\in D^{n}}} \ \displaystyle{\prod_{j=1}^{n+1}} \ f_{j}(y_{j}) \det(y_{1}, \dots,y_{n+1})^{\frac{n+1}{p}}, \eqno(4.34)$$
where $C_{p,n}= |\mathbb{S}_{+}^{n}|^{\frac{n+1}{p}}$.
Thus from the discussion above we have
$$  \displaystyle{\prod_{j=1}^{n+1}} \|F_{j}\|_{L^{p}(\mathbb{H}_{+}^{n})} \leq   |\mathbb{S}_{+}^{n}|^{\frac{n+1}{p}} \
\displaystyle{\sup_{q_{j}\in \mathbb{H}_{+}^{n}}} \ \displaystyle{\prod_{j=1}^{n+1}} \ F_{j}(q_{j}) \det(q_{1}, \dots,q_{n+1})^{\frac{n+1}{p}}.$$
Similarly to the arguments in the proof of Theorem 4.10,  we also have for any nonnegative $F_{j} \in  L^{p}(\mathbb{H}^{n})$,  $0 < p < \infty$,
there exists a finite constant $E_{p,n}= 2^{\frac{n+1}{p}}  |\mathbb{S}_{+}^{n}|^{\frac{n+1}{p}}$ such that
$$  \displaystyle{\prod_{j=1}^{n+1}} \|F_{j}\|_{L^{p}(\mathbb{H}^{n})} \leq  E_{p,n} \
\displaystyle{\sup_{q_{j}\in \mathbb{H}^{n}}} \ \displaystyle{\prod_{j=1}^{n+1}} \ F_{j}(q_{j}) \det(q_{1}, \dots,q_{n+1})^{\frac{n+1}{p}}. $$

\end{proof}

\medskip

However we do not know the extremal functions and the  best constant $C_{p,n}$ for inequality (4.34).
So a problem is to determine the optimisers  and the best constant $E_{p,n}$
for the  multilinear geometric inequality  (4.33) in hyperbolic space $\mathbb{H}^{n}$.

\bigskip

\hspace{-13pt}{\bf Acknowledgments.}\quad

I am  grateful to my supervisor Professor Carbery for his helpful advice and constructive suggestions.
I would like to thank Professor Carbery for repeatedly making discussion and revision on this paper.
This work was supported by the scholarship from China Scholarship Council.

\bibliographystyle{amsalpha}

\end{document}